\documentclass[]{siamart}



\usepackage{lipsum}
\usepackage{amsfonts}
\usepackage{graphicx,epstopdf}
\usepackage[caption=false]{subfig}
\usepackage{epstopdf}
\usepackage{algorithmic}
\usepackage{enumitem}
\usepackage[sort,nocompress]{cite}
\ifpdf
  \DeclareGraphicsExtensions{.eps,.pdf,.png,.jpg}
\else
  \DeclareGraphicsExtensions{.eps}
\fi

\newcommand{\TheTitle}{Conservative methods for dynamical systems} 
\newcommand{\TheAuthors}{Andy T. S. Wan, Alexander Bihlo and Jean-Christophe Nave}

\headers{\TheTitle}{\TheAuthors}

\title{{\TheTitle}\thanks{ATSW was supported by the Centre de recherches math\'ematiques. AB was supported by the Canada Research Chairs program and NSERC Discovery Grant program. JCN was supported by the NSERC Discovery Grant progam and NSERC Discovery Accelerator Supplement programs.}}

\author{
  Andy T. S. Wan\thanks{Department of Mathematics and Statistics, McGill University, Montr\'eal, QC, H3A 0B9, Canada
    (\email{andy.wan@mcgill.ca}).}
    \and
    Alexander Bihlo\thanks{Department of Mathematics and Statistics, Memorial University of Newfoundland, St.\ John's, NL, A1C 5S7, Canada
    (\email{abihlo@mun.ca}).}
    \and
  Jean-Christophe Nave\thanks{Department of Mathematics and Statistics, McGill University, Montr\'eal,  QC, H3A 0B9, Canada (\email{jcnave@math.mcgill.ca}).}
}

\usepackage{amsopn}

\usepackage{amstext,amsmath,amssymb}
\newtheorem{example}{Example}
\newsiamthm{claim}{Claim}
\newsiamthm{remark}{Remark}
\newsiamthm{hypo}{Hypothesis}

\def\[#1\]{\begin{align*}#1\end{align*}}

\newcommand\norm[1]{\left\lVert#1\right\rVert}

\newcommand{\bb}{\boldsymbol }


\ifpdf
\hypersetup{
  pdftitle={\TheTitle},
  pdfauthor={\TheAuthors}
}
\fi




\begin{document}

\maketitle

\begin{abstract}
We show a novel systematic way to construct conservative finite difference schemes for quasilinear first-order system of ordinary differential equations with conserved quantities. In particular, this includes both autonomous and non-autonomous dynamical systems with conserved quantities of arbitrary forms, such as time-dependent conserved quantities. Sufficient conditions to construct conservative schemes of arbitrary order are derived using the multiplier method. General formulas for first-order conservative schemes are constructed using divided difference calculus. New conservative schemes are found for various dynamical systems such as Euler's equation of rigid body rotation, Lotka--Volterra systems, the planar restricted three-body problem and the damped harmonic oscillator.
\end{abstract}

\begin{keywords}
dynamical system, autonomous, non-autonomous, conserved quantity, first integral, finite difference, conservative methods, multiplier method, long-term stability, divided difference
\end{keywords}

\begin{AMS}
65L05  
65L06  
65L12, 
65L20, 
65P10, 
65Z05, 
37M05, 
37M15 
\end{AMS}


\section{Introduction}\text{ }\\
Conservative methods for ordinary differential equations (ODEs) are numerical methods which preserve their first integrals, invariants or, equivalently, conserved quantities. One primary motivation behind their use is the intrinsic long-term stability properties which conservative methods can possess, making them important in the long time study of dynamical systems in fields such as, but not limited to, astronomy, molecular dynamics, fluid mechanics, climate prediction and mathematical biology.

In the past decades, considerable research efforts have been devoted to developing methods which preserve both approximately or exactly (up to round-off errors) first integrals \cite{LaBGre75, Qui97, MclQuiRob98, McL99, MclQui04, DahOwrYag11, cell12a, NorQui14, NorAE15}. For Hamiltonian systems, symplectic methods and variational integrators approximately conserve the energy and also preserve other important underlying geometric structures. Specifically, symplectic methods preserve the symplectic structure of canonical Hamiltonian systems so that their discrete flow is volume-preserving. Moreover, their energy is nearly conserved over an exponentially long time \cite{CalHai95}. Variational integrators preserve Hamilton's principle of stationary action at a discrete level and as a result, they are symplectic and can exactly conserve momenta arising from symmetries due to a discrete version of Noether's theorem \cite{MarWes01}. 

Other approximate and exactly conservative methods exist for special classes of ODEs or special forms of invariants. For instance, Kahan's method applies to at most quadratic vector fields and exactly preserves a modified Hamiltonian when applied to Hamiltonian systems \cite{Cel09}. Although all Runge-Kutta methods exactly preserve linear invariants \cite{Sha86} and, in some cases, quadratic invariants \cite{coo87}, it is known that no consistent Runge-Kutta method can exactly preserve arbitrary polynomial invariants with degree three or higher \cite{Cal97}. However, in the case of Hamiltonian systems, specific Runge-Kutta methods can be constructed to exactly preserve the energy of a particular polynomial form \cite{Cel09}. Moreover, the average vector field method exactly preserves general energy functions using an integral formula \cite{QuiMcL08}. 

In the general case, to the best of our knowledge, there are two known classes of exactly conservative methods for ODEs, specifically for quasilinear systems\footnote{Here, quasilinear means the ODE system is linear in its highest time derivative.}. One class of methods is called the discrete gradient method \cite{Qui97, McL99} which can exactly preserve arbitrary forms of first integrals. The general idea is to rewrite the ODE system in a skew-gradient form so that a discrete gradient method can be used to exactly preserve their first integrals. Difficulties can arise in the reformulation at degenerate critical points of the first integrals and in constructing sparse skew-symmetric tensors for large systems. Moreover, the discrete gradient method has so far only been developed for time-independent conserved quantities.

The other general class of exactly conservative method are projection methods. The main idea is to advance in time using any numerical scheme and project its solution back onto the level set of the invariants after some number of time steps \cite{hair06Ay}. The projection step is generally performed by solving a constrained optimization problem using Lagrange multipliers, which can be computationally expensive for large systems. Thus, a projection step is usually taken only once every few time steps. While projection methods are exactly conservative in general, this approach may not possess long-term stability properties as discussed in \cite{wan16}. Specifically, the projection step may project onto a different connected component of the level set of the invariants leading to instability over long time.

In contrast to the well-known quantities such as energy and momentum, there can be conserved quantities for dynamical systems which may not have an a priori physical meaning. One example is the time-dependent conserved quantity for the damped harmonic oscillator observed in \cite{wan15a}. To treat arbitrary forms of conserved quantities of ODEs and as well as conservation laws of partial differential equations (PDEs), the multiplier method was introduced in \cite{wan15a} as a general conservative method. The main idea is to discretize the so-called characteristics \cite{olve86Ay} or conservation law multipliers \cite{blum10Ay} of a PDE system so that a discrete divergence theorem holds. It is known from \cite{olve86Ay} that for a normal, nondegenerate PDE system, there is a one-to-one correspondence between equivalence classes of conservation laws and their associated multipliers. Thus, the multiplier method can be applied to virtually all differential equations arising in practical applications without any additional geometric structures, such as non-Hamiltonian systems or nonautonomous systems. In particular, for quasilinear first-order systems, we will show that the multiplier method can exactly preserve arbitrary forms of conserved quantities, such as time-dependent conserved quantities, and can be applied directly without any reformulation or transformation of the given ODE system. This is especially important for constructing conservative semi-discretizations of PDEs, where arbitrary large systems can arise which necessitates a systematic construction of conservative schemes without additional reformulation or transformation.

In this paper, we specialize the multiplier method to quasilinear first-order systems and show that conservative schemes can be systematically constructed for general forms of conserved quantities. This paper is organized as follows. In Section~\ref{sec:notation}, we introduce notations and conventions used throughout the paper. In Section~\ref{sec:multTheory}, we give a short introduction on the theory of conservation law multipliers for quasilinear ODEs. It is shown that conserved quantities and conservation law multipliers share two important relations. In Section~\ref{sec:multMethod}, these relations are discretized leading to
sufficient conditions for constructing conservative schemes of arbitrary order. General formulas for conservative schemes of (at least) first order are provided using divided difference calculus. In Section~\ref{sec:CM_examples}, conservative schemes are derived systematically for the Euler's equation of rigid body rotation, Lotka--Volterra systems, the planar restricted three-body problem and the damped harmonic oscillator. In Section~\ref{sec:numerics}, exact conservation properties are numerically verified for the derived examples up to round-off errors. And finally, some concluding remarks are given on future work in the conclusion.

\section{Notations and conventions}\label{sec:notation}\text{ }\\
Let $U\subset \mathbb{R}^n$ and $V\subset \mathbb{R}^m$ be open subsets where here and in the following $n,m,p \in \mathbb{N}$. $f\in C^p(U\rightarrow V)$ means $f$ is a $p$-times continuously differentiable function with domain in $U$ and range in $V$. When the domain and range of $f$ is clear, we simply write $f\in C^p$. If $p=0$, we write $f\in C(U\rightarrow V)$ to indicate that $f$ is a continuous function on $U$. We often use boldface to indicate a vectorial quantity $\bb f$. If $\bb f\in C^p(V\rightarrow \mathbb{R}^n)$, $D^p\bb f$ denotes the $p$ partial derivatives of $\bb f$ with respect to $\bb x$ and $\partial_{\bb x} \bb f:=\left[\frac{\partial f_i}{\partial x_j}\right]$ denotes the Jacobian matrix. Let $I\subset \mathbb{R}$ be an open interval and let $\bb x\in C^1(I\rightarrow U)$, $\dot{\bb x}$ denotes the derivative with respect to time $t\in I$. Also if $\bb x\in C^p(I\rightarrow U)$, $\bb x^{(q)}$ denotes the $q$-th time derivative of $\bb x$ for $1\leq q\leq p$. For the sake of brevity, the explicit dependence of $\bb x$ on $t$ is often omitted with the understanding that $\bb x$ is to be evaluated at $t$. If $\bb \psi\in C^1(I\times U\rightarrow \mathbb{R}^m)$, $D_t \bb\psi$ denotes the total derivative with respect to $t$, and $\partial_t \bb\psi$ denotes the partial derivative with respect to $t$. $M_{m\times n}(\mathbb{R})$ denotes the set of all $m\times n$ matrices with real entries. For a vector $\bb v$ or matrix $A$, $\bb v^T$ and $A^T$ denote their transposes. The usual dot product of two vectors $\bb v, \bb w \in \mathbb{R}^n$ is denoted by $\bb v\cdot \bb w$ or equivalently by $\bb v^T\bb w$. $\bb e_i\in \mathbb{R}^n$ denotes the $i$-th standard basis vector.

\section{Theory of conservation law multipliers for quasilinear ODEs}
\label{sec:multTheory}
\text{ }\\
In this section, we present a short self-contained description on the theory of conservation law multipliers for quasilinear first-order systems of ODEs\footnote{As usual, quasilinear ODEs with higher order derivatives can be transformed into quasilinear first-order systems of ODEs by introducing auxiliary variables for the higher order derivatives.} with real entries and $C^p$ solutions. In particular, this includes both autonomous or non-autonomous dynamical systems.\footnote{Indeed, a non-autonomous system can always be transformed into an autonomous one. However, we will show that the proposed conservative methods can be applied to the system ``as is" and thus they are directly applicable to semi-discretizations of PDEs.} In essence for quasilinear first-order ODEs, we show that there is a one-to-one correspondence between its conserved quantities and so-called \emph{conservation law multipliers}. 
Although this correspondence principle is not new, as \cite{olve86Ay} proved a more general correspondence for equivalent classes of conservation laws for \emph{normal, nondegenerate} PDE systems with locally $C^\infty$ functions. The new proofs presented here have the advantage of being quite elementary and thus are more amendable to non-specialists and for extensions to weaker regularity assumptions. 

Consider a quasilinear first-order system of ordinary differential equations,
\begin{align}
\bb F(t,\bb x, \dot{\bb x}):=\dot{\bb x}(t) - \bb f(t,\bb x) &= \bb 0, \label{odeEqn} \\
\bb x(t_0)&=\bb x_0. \nonumber
\end{align} where $t\in I$, $\bb x=(x_1(t),\dots, x_n(t)) \in U$. 
For $p\in \mathbb{N}$, we consider the case $\bb f \in C^{p-1}(I\times U\rightarrow\mathbb{R}^n)$. Thus, by standard ODE theory, there exists an unique solution $\bb x\in C^p(I\rightarrow U)$ to the first order system \eqref{odeEqn} in a neighborhood of $(t_0,\bb x_0)\in I\times U$. We shall assume $I$ is such maximal interval of existence.

\subsection{Conserved quantities of quasilinear first order ODEs}
\begin{definition} Let $m,q\in\mathbb{N}$ with $1\leq m$ and $q\leq p-1$. For $i=1,\dots, q$, let $U^{(i)}$ be open subsets of $\mathbb{R}^n$. A $q$-th order conserved quantity of $\bb F$ is a vector-valued function $\bb \psi \in C^1(I\times U\times U^{(1)}\times\cdots\times U^{(q)}\rightarrow \mathbb{R}^m)$ such that
\begin{align}
D_t \bb \psi(t,\bb x, \dot{\bb x},\dots, \bb x^{(q)}) = \bb 0, \text{ for any $t\in I$ and $C^p$ solution } \bb x \text{ of }\bb F. \label{CLdef}
\end{align}
\end{definition}
It follows that $\bb \psi(t,\bb x, \dot{\bb x},\dots, \bb x^{(q)})$ is constant on $t\in I$ for any $C^p$ solution $\bb x$ of $\bb F$. By the quasilinear property of $\bb F$, we can differentiate both sides of \eqref{odeEqn} with respect to $t$ up to $q-1$ times and substitute time derivatives of $\bb x$ with partial derivatives of $\bb f$:
\[
\bb x^{(2)} &= D_t \bb f(t,\bb x) = \bb f_1(t,\bb x,\bb f(t, \bb x), D\bb f(t, \bb x) )=:\tilde{\bb f}_1(t,\bb x), \\
&\text{ }\vdots \\
\bb x^{(q)} &= D_t^{(q-1)} \bb f(t,\bb x) = \bb f_{q-1}(t,\bb x,\bb f(t, \bb x), D\bb f(t, \bb x),\dots, D^{(q-1)}\bb f(t, \bb x))=:\tilde{\bb f}_q(t,\bb x).
\]
Thus, we can rewrite a $q$-th order conserved quantity $\bb \psi$ as a zeroth order conserved quantity $\tilde{\bb \psi}(t,\bb x):=\bb \psi(t, \bb x, \tilde{\bb f}_1(t, \bb x),\dots, \tilde{\bb f}_{q-1}(t, \bb x))$ so that $D_t\tilde{\bb \psi}(t,\bb x) = \bb 0$ for any $C^p$ solution $\bb x$ of $\bb F$. In other words, without loss of generality, it suffices to consider only zeroth order conserved quantities $\bb \psi(t,\bb x)$ for $\bb F$ in \eqref{CLdef}.

\subsection{Conservation law multipliers for quasilinear first order ODEs}
\text{ }\\
Next, we introduce a generalization of integrating factors referred to as \emph{characteristics} by \cite{olve86Ay} or equivalently, \emph{conservation law multipliers} by \cite{blum10Ay}. We will adopt the terminology of conversation law multiplier or just multiplier when the context is clear.\vskip -2mm
\begin{definition}
A conservation law multiplier of $\bb F$ is a matrix-valued function $\Lambda \in C(I\times U \times U^{(1)}\rightarrow M_{m\times n}(\mathbb{R}))$ such that there exists a function $\bb \psi \in C^1(I\times U\rightarrow \mathbb{R})$,
\begin{align}
\Lambda(t,\bb x, \dot{\bb x})(\dot{\bb x}(t)-\bb f(t,\bb x)) = D_t \bb \psi(t,\bb x), \text{ for $t\in I$, }\bb x \in C^1(I\rightarrow U). \label{multEqn}
\end{align}
\end{definition}
Here, we emphasize that condition \eqref{multEqn} holds as an identity for arbitrary $C^1$ functions $\bb x$; that is, $\bb x$ does not need to be a solution of $\bb F$. Moreover, in general, there can be many conservation law multipliers for the same function $\bb \psi$. However, if we restrict to multipliers of the form $\Lambda(t,\bb x)$, then there is a one-to-one correspondence between conservation law multipliers of $\bb F$ and conserved quantities of $\bb F$, up to constant factors in $\bb \psi$. To show this, we need the following lemma.

\begin{lemma}
Let $\bb g\in C(I\times U\rightarrow \mathbb{R}^m)$ and suppose $\bb g$ has the property that $\bb g(t,\bb x)=\bb 0$ for any $C^1$ solution $\bb x$ of $\bb F$, then $\bb g(t,\bb x)=0$ for all $(t, \bb x)\in I\times U$. In other words, $\bb g$ is identically the zero function on $I\times U$.  \label{vanishLem}
\end{lemma}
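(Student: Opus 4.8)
The plan is to exploit the fundamental fact recorded in the paragraph following \eqref{odeEqn}: through any prescribed initial data there passes a $C^p$ (and in particular $C^1$) solution of $\bb F$. The strategy is pointwise: rather than working with a fixed distinguished solution, I would show that the value of $\bb g$ at an \emph{arbitrary} point of $I\times U$ is forced to vanish, simply because every such point lies on some solution curve.

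Concretely, first I would fix an arbitrary point $(t_*,\bb x_*)\in I\times U$ and reduce the claim to proving $\bb g(t_*,\bb x_*)=\bb 0$. Since $\bb f\in C^{p-1}$ with $p\geq 1$, the standard existence theory already invoked after \eqref{odeEqn} guarantees a $C^1$ solution $\bb x$ of the initial value problem $\dot{\bb x}=\bb f(t,\bb x)$, $\bb x(t_*)=\bb x_*$, defined on some subinterval of $I$ containing $t_*$. This $\bb x$ is by construction a $C^1$ solution of $\bb F$, so the standing hypothesis applies to it and yields $\bb g(t,\bb x(t))=\bb 0$ for every $t$ in its domain.

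Evaluating this identity at the single time $t=t_*$ gives $\bb g(t_*,\bb x_*)=\bb g(t_*,\bb x(t_*))=\bb 0$. As $(t_*,\bb x_*)$ was an arbitrary point of $I\times U$, this establishes $\bb g\equiv\bb 0$ on $I\times U$, which is the assertion of the lemma.

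The only point requiring care — and the closest thing to an obstacle — is the legitimacy of prescribing the initial condition at an arbitrary base point $(t_*,\bb x_*)$ rather than at the distinguished $(t_0,\bb x_0)$ of \eqref{odeEqn}. I would observe that the cited existence result is local and applies verbatim with any base point in $I\times U$, and that a locally defined solution still qualifies as a $C^1$ solution of $\bb F$, so the hypothesis on $\bb g$ is genuinely applicable to it. Notably, no uniqueness or Lipschitz assumption is needed here: the mere existence of \emph{one} solution through each point suffices to cover all of $I\times U$.
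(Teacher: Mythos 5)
Your proposal is correct and is essentially the paper's own argument: both rest on the observation that local existence puts a $C^1$ solution of $\bb F$ through every point of $I\times U$, so the hypothesis forces $\bb g$ to vanish there. The only difference is presentational — the paper phrases it as a proof by contradiction while you argue directly at an arbitrary point — which is immaterial.
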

\begin{proof}
Suppose $\bb g(t_0,\bb x_0)\neq \bb 0$ for some $t_0\in I, \bb x_0\in U$. Local existence of solution to $\bb F$ implies there exists a $C^1$ (in fact $C^p$) solution $\bb x$ with $\bb x(t_0)=\bb x_0$. However, $\bb 0\neq \bb g(t_0,\bb x(t_0))$ contradicts the hypothesis that $\bb g$ vanishes on any $C^1$ solution of $\bb F$.
\end{proof}

Now, we are in the position to show the key correspondence result for conservation law multipliers of the form $\Lambda(t,\bb x)$.

\begin{theorem}[Correspondence Theorem] \label{corrThm}
Let $\bb \psi\in C^1(I\times U\rightarrow \mathbb{R}^m)$. Then there exists a unique conservation law multiplier of $\bb F$ of the form $\Lambda \in C(I\times U\rightarrow M_{m\times n}(\mathbb{R}))$ associated with the function $\bb \psi$ if and only if $\bb \psi$ is a conserved quantity of $\bb F$. And if so, $\Lambda$ is unique and satisfies for any $t\in I$ and $ \bb x\in C^1(I\rightarrow U)$,
\begin{subequations}
\begin{align}
\Lambda(t,\bb x) = \partial_{\bb x} \bb \psi(t,\bb x), \label{multCond1} \\
\Lambda(t,\bb x) \bb f(t,\bb x) = -\partial_t \bb \psi(t,\bb x). \label{multCond2}
\end{align}
\end{subequations}
\end{theorem}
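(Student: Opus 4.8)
The plan is to anchor everything on the chain-rule expansion $D_t \boldsymbol{\psi}(t,\bb x) = \partial_t \boldsymbol{\psi}(t,\bb x) + \partial_{\bb x}\boldsymbol{\psi}(t,\bb x)\,\dot{\bb x}$, valid for any $C^1$ path $\bb x$, and to exploit the fact that the defining identity \eqref{multEqn} for a multiplier is required to hold for \emph{arbitrary} $C^1$ functions $\bb x$, not merely solutions of $\bb F$. The decisive structural feature is that a multiplier of the form $\Lambda(t,\bb x)$ does not depend on $\dot{\bb x}$, so at a fixed point I may vary $\dot{\bb x}$ freely while holding $(t,\bb x)$ fixed. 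For the easy implication, I would suppose such a $\Lambda(t,\bb x)$ exists and evaluate \eqref{multEqn} along an arbitrary $C^p$ solution of $\bb F$: there $\dot{\bb x} = \bb f(t,\bb x)$, so the left-hand side vanishes and $D_t\boldsymbol{\psi} = \bb 0$, showing $\boldsymbol{\psi}$ satisfies \eqref{CLdef} and is a conserved quantity.

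For the converse, suppose $\boldsymbol{\psi}$ is conserved. Evaluating the chain-rule expansion along solutions, where $\dot{\bb x}=\bb f$, gives $\partial_t\boldsymbol{\psi}(t,\bb x)+\partial_{\bb x}\boldsymbol{\psi}(t,\bb x)\,\bb f(t,\bb x)=\bb 0$ on every solution. Setting $\bb g(t,\bb x):=\partial_t\boldsymbol{\psi}(t,\bb x)+\partial_{\bb x}\boldsymbol{\psi}(t,\bb x)\,\bb f(t,\bb x)$, which is continuous since $\boldsymbol{\psi}\in C^1$ and $\bb f$ is continuous, Lemma~\ref{vanishLem} upgrades this to the identity $\partial_t\boldsymbol{\psi}+\partial_{\bb x}\boldsymbol{\psi}\,\bb f\equiv\bb 0$ on all of $I\times U$, which is exactly \eqref{multCond2} under the candidate choice $\Lambda:=\partial_{\bb x}\boldsymbol{\psi}$ of \eqref{multCond1}. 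I would then confirm this candidate is genuinely a multiplier by substituting it into \eqref{multEqn}: the left-hand side becomes $\partial_{\bb x}\boldsymbol{\psi}\,\dot{\bb x}-\partial_{\bb x}\boldsymbol{\psi}\,\bb f$, and replacing $-\partial_{\bb x}\boldsymbol{\psi}\,\bb f$ by $\partial_t\boldsymbol{\psi}$ via the identity just obtained reconstitutes $D_t\boldsymbol{\psi}$ for every $C^1$ path, as required.

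For uniqueness together with \eqref{multCond1}--\eqref{multCond2}, I would take any multiplier $\Lambda(t,\bb x)$ and rewrite \eqref{multEqn} using the chain rule as $\bigl(\Lambda(t,\bb x)-\partial_{\bb x}\boldsymbol{\psi}(t,\bb x)\bigr)\dot{\bb x} = \partial_t\boldsymbol{\psi}(t,\bb x)+\Lambda(t,\bb x)\,\bb f(t,\bb x)$. Fixing $(t_0,\bb x_0)\in I\times U$ and choosing, for each $\bb v\in\mathbb{R}^n$, the affine path $\bb x(t)=\bb x_0+\bb v\,(t-t_0)$ (which is $C^1$ and remains in the open set $U$ near $t_0$), I realize $\dot{\bb x}(t_0)=\bb v$ arbitrarily while $\bb x(t_0)=\bb x_0$ is fixed and $\Lambda(t_0,\bb x_0)$ is unchanged. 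The identity then reads as an affine relation in $\bb v$ whose right-hand side is independent of $\bb v$; matching the linear part forces $\Lambda(t_0,\bb x_0)=\partial_{\bb x}\boldsymbol{\psi}(t_0,\bb x_0)$, and the constant part yields $\Lambda(t_0,\bb x_0)\bb f(t_0,\bb x_0)=-\partial_t\boldsymbol{\psi}(t_0,\bb x_0)$. As $(t_0,\bb x_0)$ is arbitrary, this establishes both \eqref{multCond1} and \eqref{multCond2} and pins down $\Lambda$ uniquely.

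I expect the only point needing care to be the justification that $\dot{\bb x}$ can be prescribed independently of $\bb x$ at a point, realized above by the affine test paths. This is precisely where the restriction to multipliers $\Lambda(t,\bb x)$ rather than $\Lambda(t,\bb x,\dot{\bb x})$ is essential: if $\Lambda$ were allowed to depend on $\dot{\bb x}$, it would itself vary with $\bb v$ and the coefficient-matching that decouples the linear and constant parts in $\bb v$ would collapse, so neither uniqueness nor the explicit formulas \eqref{multCond1}--\eqref{multCond2} would follow.
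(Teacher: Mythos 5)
Your proposal is correct and follows essentially the same route as the paper: the forward implication by evaluation on solutions, the converse by applying Lemma~\ref{vanishLem} to $\bb g=\partial_t\bb\psi+\partial_{\bb x}\bb\psi\,\bb f$ with the candidate $\Lambda=\partial_{\bb x}\bb\psi$, and the formulas \eqref{multCond1}--\eqref{multCond2} by testing \eqref{multEqn} against affine paths with prescribed velocity. The only cosmetic difference is that you match the linear and constant parts in $\bb v$ simultaneously, whereas the paper first takes the constant path ($\bb v=\bb 0$) to get \eqref{multCond2} and then the paths $\bb x(t)=\bb e_i(t-s)+\bb y$ to extract the columns for \eqref{multCond1}.
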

\begin{proof}Suppose $\Lambda \in C(I\times U\rightarrow M_{m\times n}(\mathbb{R}))$ is a conservation law multiplier of $\bb F$ associated with the function $\bb \psi$. It follows immediately from the definition of conservation law multipliers that for any $\bb x\in C^1$ solution of $\bb F$, 
\[
D_t\bb \psi(t, \bb x) = \Lambda(t,\bb x) (\dot{\bb x}(t)-f(t,\bb x)) = 0.
\]
Thus, the function $\bb \psi$ is a conserved quantity of $\bb F$.
To show that $\Lambda$ satisfy \eqref{multCond1} and \eqref{multCond2}, note that for any $\bb x\in C^1(I\rightarrow U)$, 
\begin{align}
\Lambda(t,\bb x)(\dot{\bb x}(t)-\bb f(t,\bb x)) = D_t\bb \psi(t, \bb x) &= \partial_{\bb x} \bb \psi(t,\bb x)\cdot \dot{\bb x}(t)+\partial_{t} \bb \psi(t,\bb x) \label{multRelation1}
\end{align}
Choosing the constant function $\bb x(t)=\bb y\in U$ in \eqref{multRelation1} implies $\Lambda$ satisfies \eqref{multCond2} for any $(t, \bb y)\in I\times U$. Since \eqref{multCond2} is now satisfied, \eqref{multRelation1} simplifies to
\begin{align}
\Lambda(t,\bb x)\dot{\bb x}(t) = \partial_{\bb x} \bb \psi(t,\bb x)\cdot \dot{\bb x}(t), \text{ for any }\bb x\in C^1(I\rightarrow U). \label{multRelation2}
\end{align} For $i=1,\dots, n$ and any $s\in I, \bb y\in U$, choosing the linear function $\bb x(t)=\bb e_i(t-s)+\bb y$ and evaluating \eqref{multRelation2} at $t=s$ shows the $i$-th column of $\Lambda(s,\bb y)$ and $\partial_{\bb x} \bb \psi(s,\bb y)$ are equal, which implies $\Lambda$ also satisfies \eqref{multCond1}.

Conversely, let $\bb \psi(t,\bb x)$ be a conserved quantity of $\bb F$. Define $\bb\Lambda (t, \bb x) = \partial_{\bb x} \bb \psi(t,\bb x)$ as given in \eqref{multCond1}. Then, for any $\bb x\in C^1(I\rightarrow U)$,
\[
D_t\bb \psi(t, \bb x) &= \partial_{\bb x} \bb \psi(t,\bb x)\cdot \dot{\bb x}(t)+\partial_t\bb \psi(t,\bb x) \\
&= \partial_{\bb x} \bb \psi(t,\bb x)\cdot (\dot{\bb x}(t)-\bb f(t,\bb x)) + \underbrace{\Lambda(t,\bb x)\bb f(t,\bb x) +\partial_t \bb \psi(t,\bb x)}_{=:\bb g(t,\bb x)}
\] Since $\bb \psi$ is a conserved quantity of $\bb F$, for any $C^1$ solution $\bb x$ of $\bb F$,
\[
\bb g(t,\bb x) = \underbrace{D_t\bb \psi(t, \bb x)}_{=0} - \partial_{\bb x} \bb \psi(t,\bb x)\cdot \underbrace{(\dot{\bb x}(t)-\bb f(t,\bb x))}_{=0} = 0.
\] In other words, $\bb g(t,\bb x)=0$ on any $C^1$ solution $\bb x$ of $\bb F$. Thus, by Lemma \ref{vanishLem}, $\bb g(t,\bb x)$ is identically zero which implies \eqref{multCond2}.
\end{proof}
\begin{remark}
It is possible for an autonomous system $\bb F$ to have time dependent conserved quantities, as Example \ref{dhoSys} of Section \ref{exCLMult} will illustrate.
\end{remark}
\begin{remark}
Note that on the solutions of $\bb F$, a conserved quantity $\bb \psi$ of $\bb F$ is advected by the flow velocity $\bb f$ given by, 
\[ \partial_t \bb \psi(t,\bb x) + \partial_{\bb x}\bb \psi(t,\bb x) \bb f(t,\bb x) = 0.\] Indeed, this is implied by conditions \ref{multCond1}--\ref{multCond2} restricted to $C^1$ solutions of $\bb F$. Moreover, in fact, conditions \ref{multCond1}--\ref{multCond2} say a conserved quantity $\bb \psi$ of $\bb F$ satisfies the above advection equation for any $\bb x\in C^1(I\rightarrow U)$.
\end{remark}

Theorem \ref{corrThm} is useful in constructing conserved quantities of $\bb F$. In particular, it is enough to consider multipliers of the form $\Lambda(t,\bb x)$, which will make computation much simpler in practice. We will make use of conditions \eqref{multCond1} and \eqref{multCond2} to systematically construct conservative discretizations for $\bb F$ in Section \ref{sec:multMethod}. 

For a known conserved quantity $\bb \psi(t,\bb x)$ of $\bb F$, the corresponding conservation law multiplier $\Lambda(t,\bb x)$ can be computed using \eqref{multCond1}. Moreover in general, even if a conserved quantity of $\bb F$ is not known in advance, conserved quantities may be found using the Euler operator \cite{olve86Ay}. For brevity, here we only define the Euler operator for functions of the form $\bb g(t, \bb x, \dot{\bb x})$, though similar results hold for higher order partial derivatives. \vskip -3mm
\begin{definition} Let $\bb g \in C^1(I\times U\times U^{(1)}\rightarrow \mathbb{R}^m)$. For $i=1,\dots, n$, the Euler operator of $\bb g$ is the linear operator $E:C^1(I\times U\times U^{(1)}\rightarrow \mathbb{R}^m)\rightarrow \mathbb{R}^m$ defined by,
\[
(E\bb g)(t,\bb x,\dot{\bb x}) := \partial_{\bb x} \bb g(t, \bb x, \dot{\bb x}) - \left(D_t\circ \partial_{\dot{\bb x}}\right) \bb g(t, \bb x, \dot{\bb x}), \text{ for any } \bb x\in C^{1}(I\rightarrow U).
\]
\end{definition}

\begin{theorem}[Euler operator] \label{EulerOp} Let $I\times U\times U^{(1)}$ be a star-shaped domain centered at $(t_0,\bb x_0, \dot{\bb x}_0)$ and $\bb g\in C^1(I\times U\times U^{(1)}\rightarrow \mathbb{R}^m)$. Then $(E \bb g)(t,\bb x, \dot{\bb x}) = \bb 0$ for all $(t,\bb x, \dot{\bb x})\in I\times U\times U^{(1)}$ if and only if there exists $\bb \psi\in C^2(I\times U\times U^{(1)}\rightarrow \mathbb{R}^m)$ such that $\bb g(t,\bb x, \dot{\bb x})=D_t \bb \psi(t,\bb x, \dot{\bb x})$ for all $(t,\bb x, \dot{\bb x})\in I\times U\times U^{(1)}$. 
\end{theorem}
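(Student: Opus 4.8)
The plan is to treat the two implications separately, recognizing the statement as the classical fact that the kernel of the Euler operator is exactly the set of total derivatives (``null Lagrangians''), with the nontrivial converse amounting to an application of the Poincar\'e lemma on the star-shaped domain.

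The forward implication is a direct computation. If $\bb g = D_t\bb\psi$, then since $\bb g$ is a function of $(t,\bb x,\dot{\bb x})$ only whereas $D_t\bb\psi$ produces terms proportional to $\bb x^{(2)}$ whenever $\bb\psi$ depends on $\dot{\bb x}$, the identity forces $\partial_{\dot{\bb x}}\bb\psi = \bb 0$; hence effectively $\bb\psi = \bb\psi(t,\bb x)$ and $\bb g = \partial_t\bb\psi + \partial_{\bb x}\bb\psi\cdot\dot{\bb x}$. I would then substitute into the definition of $E$ and compute $\partial_{x_i}\bb g - D_t\partial_{\dot x_i}\bb g$ for each $i$; the two contributions cancel by equality of the mixed partials $\partial_{x_i}\partial_{x_j}\bb\psi = \partial_{x_j}\partial_{x_i}\bb\psi$ and $\partial_{x_i}\partial_t\bb\psi = \partial_t\partial_{x_i}\bb\psi$, which is exactly where the hypothesis $\bb\psi\in C^2$ enters.

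For the converse, the key preliminary step is to show that $E\bb g=\bb 0$ forces $\bb g$ to be affine in $\dot{\bb x}$. Expanding $D_t\partial_{\dot x_i}\bb g$ exposes a term $\sum_j \partial_{\dot x_j}\partial_{\dot x_i}\bb g\, x_j^{(2)}$, while $\partial_{x_i}\bb g$ carries no $\bb x^{(2)}$ dependence; since the $i$-th component of $E\bb g$ must vanish along arbitrary curves, equivalently for arbitrary values of $\bb x^{(2)}$, each $\partial_{\dot x_j}\partial_{\dot x_i}\bb g$ vanishes. Thus $\partial_{\dot x_i}\bb g =: \bb b_i(t,\bb x)$ is independent of $\dot{\bb x}$ and $\bb g = \bb a(t,\bb x) + \sum_i \bb b_i(t,\bb x)\,\dot x_i$. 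Feeding this form back into $E\bb g=\bb 0$ and separating the $\dot{\bb x}$-independent part from the coefficient of each $\dot x_j$ yields precisely the integrability relations $\partial_{x_i}\bb a = \partial_t\bb b_i$ and $\partial_{x_i}\bb b_j = \partial_{x_j}\bb b_i$ for all $i,j$.

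These relations are exactly the closedness of the $\mathbb{R}^m$-valued one-form $\omega := \bb a\,dt + \sum_i \bb b_i\,dx_i$ on $I\times U$, which is star-shaped about $(t_0,\bb x_0)$ because $I\times U\times U^{(1)}$ is star-shaped about $(t_0,\bb x_0,\dot{\bb x}_0)$. I would then invoke the Poincar\'e lemma, or, to keep the argument elementary and self-contained as the authors prefer, exhibit the potential directly through the homotopy integral
\[ \bb\psi(t,\bb x) := \int_0^1 \Big[(t-t_0)\,\bb a(\gamma_s) + \sum_i (x_i-x_{0,i})\,\bb b_i(\gamma_s)\Big]\,ds, \quad \gamma_s := \big(t_0+s(t-t_0),\ \bb x_0+s(\bb x-\bb x_0)\big), \]
and verify, using the integrability relations to combine the $s$-derivative terms, that $\partial_t\bb\psi=\bb a$ and $\partial_{x_i}\bb\psi=\bb b_i$, whence $D_t\bb\psi = \bb a + \sum_i \bb b_i\,\dot x_i = \bb g$. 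The main obstacle I anticipate is bookkeeping the regularity: the Euler operator already presupposes that $\partial_{\dot{\bb x}}\bb g$ is differentiable, and one must confirm that $\bb a$ and $\bb b_i$ inherit the $C^1$ smoothness needed for the homotopy formula to return $\bb\psi\in C^2$. This holds precisely because affineness lets one recover $\bb b_i$ and $\bb a$ as differences of evaluations of the $C^1$ function $\bb g$ at fixed values of $\dot{\bb x}$, so no derivative of $\bb g$ beyond those already assumed is required.
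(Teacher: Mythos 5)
Your proof is correct and complete, and it is genuinely more detailed than what the paper offers: the paper's proof of Theorem~\ref{EulerOp} is a one-line deferral to Theorem~4.7 of \cite{olve86Ay} (translated so that the center of the star-shaped domain sits at the origin), whereas you supply a self-contained argument specialized to the first-order ODE setting. Your route --- first forcing $\partial_{\dot x_j}\partial_{\dot x_i}\bb g=\bb 0$ by isolating the coefficient of the arbitrary $\bb x^{(2)}$, so that $\bb g=\bb a(t,\bb x)+\sum_i\bb b_i(t,\bb x)\,\dot x_i$, then reading $E\bb g=\bb 0$ as closedness of the $\mathbb{R}^m$-valued $1$-form $\bb a\,dt+\sum_i\bb b_i\,dx_i$ and applying the Poincar\'e lemma on the star-shaped base $I\times U$ --- differs from Olver's general proof, which applies a vertical homotopy (scaling the dependent variables toward the center) directly to $\bb g$ without first establishing affineness in $\dot{\bb x}$. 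Both arguments use star-shapedness only in the direction $E\bb g=\bb 0\Rightarrow\bb g=D_t\bb\psi$, consistent with the paper's remark. What your version buys is precisely what the authors say they value elsewhere: an elementary proof that avoids the variational-complex machinery and makes the regularity bookkeeping transparent (your observation that affineness lets you recover $\bb a$ and $\bb b_i$ as finite differences of $\bb g$ at fixed values of $\dot{\bb x}$, hence $C^1$, so the homotopy integral returns a $C^2$ potential, is exactly the right way to close that loop). Two points you leave implicit but which are harmless: the identities are to be read as holding along arbitrary $C^2$ curves, so the coefficient of $\bb x^{(2)}$ can indeed be isolated, with the existence of $\partial^2_{\dot{\bb x}}\bb g$ already presupposed by the definition of $E$, as you note; and concluding that $\partial_{\dot x_i}\bb g$ is independent of $\dot{\bb x}$ uses connectedness of $U^{(1)}$, which holds because the projection of a star-shaped set onto a coordinate factor is again star-shaped.
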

\begin{proof}
The proof follows analogously from the case when $I=\mathbb{R}, U=\mathbb{R}^n = U^{(1)}$ proved in Theorem 4.7 of \cite{olve86Ay} with $(t_0,\bb x_0, \dot{\bb x}_0)$ translated to the origin. The star-shaped domain condition is only needed in the forward implication where a line integration is used to construct $\bb \psi$.
\end{proof} Since conservation law multipliers satisfy \eqref{multEqn}, Theorem \ref{EulerOp} implies the following:
\begin{corollary} \label{EulerCond}
Let $I\times U\times U^{(1)}$ be a star-shaped domain centered at $(t_0,\bb x_0, \dot{\bb x_0})$. Then $\Lambda \in C^1(I\times U\rightarrow M_{m\times n}(\mathbb{R}))$ is a conservation law multiplier of $\bb F$ if and only if $\left(E (\Lambda \bb F)\right)(t,\bb x, \dot{\bb x}) = \bb 0$ for all $(t,\bb x, \dot{\bb x})\in I\times U\times U^{(1)}$.
\end{corollary}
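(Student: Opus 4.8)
The plan is to read Corollary \ref{EulerCond} as nothing more than Theorem \ref{EulerOp} applied to the single function $\bb g:=\Lambda\bb F$. Indeed, by \eqref{multEqn}, saying that $\Lambda$ is a multiplier is exactly the statement that $\Lambda\bb F=D_t\bb\psi$ for some potential $\bb\psi$, and this is precisely the condition that Theorem \ref{EulerOp} characterizes through the vanishing of the Euler operator. Before invoking that theorem I would first verify that $\bb g$ lies in the admissible class. Writing $\bb g(t,\bb x,\dot{\bb x})=\Lambda(t,\bb x)\dot{\bb x}-\Lambda(t,\bb x)\bb f(t,\bb x)$, we see that $\bb g$ is affine, hence smooth, in $\dot{\bb x}$, and since $\Lambda\in C^1$ and $\bb f\in C^{p-1}\subset C^1$ (for $p\geq 2$) it is $C^1$ in $(t,\bb x)$; therefore $\bb g\in C^1(I\times U\times U^{(1)}\rightarrow\mathbb{R}^m)$. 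Furthermore $\partial_{\dot{\bb x}}\bb g=\Lambda\in C^1$, so $D_t\bigl(\partial_{\dot{\bb x}}\bb g\bigr)$, and hence $(E\bb g)$, are well-defined and continuous.

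With $\bb g=\Lambda\bb F$ fixed, Theorem \ref{EulerOp} asserts that $E(\Lambda\bb F)=\bb 0$ throughout $I\times U\times U^{(1)}$ if and only if there is a $\bb\psi\in C^2$ with $\Lambda\bb F=D_t\bb\psi$. The reverse implication of the corollary is then immediate: if $E(\Lambda\bb F)=\bb 0$, Theorem \ref{EulerOp} produces $\bb\psi\in C^2\subset C^1$ satisfying $\Lambda\bb F=D_t\bb\psi$, which is verbatim the defining relation \eqref{multEqn} for $\Lambda$ to be a multiplier.

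The forward implication is where I expect the only genuine subtlety. Being a multiplier supplies, a priori, only a $C^1$ potential $\bb\psi$, whereas the reverse direction of Theorem \ref{EulerOp} is stated for a $C^2$ potential, so I must close this regularity gap. I would do so by replaying the test-curve argument from the proof of Theorem \ref{corrThm}: substituting the constant curves $\bb x\equiv\bb y$ and then the linear curves $\bb x(t)=\bb e_i(t-s)+\bb y$ into the identity $\Lambda\bb F=D_t\bb\psi=\partial_{\bb x}\bb\psi\cdot\dot{\bb x}+\partial_t\bb\psi$ yields the pointwise relations $\partial_{\bb x}\bb\psi=\Lambda$ and $\partial_t\bb\psi=-\Lambda\bb f$. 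The right-hand sides are $C^1$, being sums and products of $C^1$ functions, so every first-order partial of $\bb\psi$ is itself $C^1$ and hence $\bb\psi\in C^2$. The hypotheses of the reverse direction of Theorem \ref{EulerOp} are now met, and we conclude $E(\Lambda\bb F)=\bb 0$, completing the equivalence. The star-shapedness hypothesis is inherited unchanged from the corollary, and it is used only inside Theorem \ref{EulerOp}, to build the potential by line integration.
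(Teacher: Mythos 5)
Your proof takes essentially the same route as the paper, which likewise deduces the corollary by applying Theorem \ref{EulerOp} to $\bb g = \Lambda\bb F$ via the defining relation \eqref{multEqn}. The only difference is that you explicitly close the $C^1$-versus-$C^2$ regularity gap for $\bb \psi$ in the forward direction (via $\partial_{\bb x}\bb\psi=\Lambda$ and $\partial_t\bb\psi=-\Lambda\bb f$, under the implicit assumption $p\geq 2$ needed for $\Lambda\bb F\in C^1$ anyway), a point the paper leaves implicit; this is a correct and welcome refinement rather than a different argument.
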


Thus for quasilinear first order systems, one can find $C^2$ conserved quantities in two steps: First, find conservation law multipliers of $\bb F$ using Corollary \ref{EulerCond}. Second, compute the corresponding conserved quantities $\bb \psi$ using \eqref{multCond1}. The use of the Euler operator to find conserved quantities applies to much more general systems, such as normal, nondegenerate PDE systems. For more details, see \cite{blum10Ay, olve86Ay}.

\subsection{Examples of conservation law multipliers}\text{ }\label{exCLMult}
\begin{example}[Hamiltonian system]
\text{ }\\ 
One classical time-independent conserved quantity of an autonomous system is the energy $H(\bb q, \bb p)$ of a Hamiltonian system,
\begin{equation}
\bb F(\bb q, \bb p, \dot{\bb p}, \dot{\bb q}) := 
\begin{pmatrix} \dot{\bb q} \\ \dot{\bb p} \end{pmatrix}-J\begin{pmatrix} \partial_{\bb p} H(\bb q,\bb p) \\ \partial_{\bb q} H(\bb q,\bb p)\end{pmatrix} = \bb 0, \label{ex:HamSys}
\end{equation} where $\bb q\in \mathbb{R}^n$ are the generalized coordinates, $\bb p\in \mathbb{R}^n$ are the generalized momenta and $J$ is the $(2n)\times(2n)$ skew-symmetric matrix
\begin{equation}
J=\begin{pmatrix}0 & I_n \\ -I_n & 0\end{pmatrix}, \text{ with the } n\times n\text{ identity matrix } I_n. \label{skewSymMat}
\end{equation}
Since $H$ is a conserved quantity of \eqref{ex:HamSys}, it follows from \eqref{multCond1} that the $2n\times 1$ matrix 
\begin{equation}
\Lambda(\bb q,\bb p)=\begin{pmatrix} \partial_{\bb q} H(\bb q,\bb p) & \partial_{\bb p} H(\bb q,\bb p) \end{pmatrix} \label{HamMult}
\end{equation} is the multiplier associated with the energy $H$. Indeed, for any $\bb q, \bb p\in C^1(I\rightarrow \mathbb{R}^n)$,
\[
\Lambda(\bb q,\bb p)\bb F(\bb q, \bb p, \dot{\bb p}, \dot{\bb q}) &= \begin{pmatrix} \partial_{\bb q} H(\bb q,\bb p) & \partial_{\bb p} H(\bb q,\bb p) \end{pmatrix}\left(\begin{pmatrix} \dot{\bb q} \\ \dot{\bb p} \end{pmatrix}-J\begin{pmatrix} \partial_{\bb p} H(\bb q,\bb p) \\ \partial_{\bb q} H(\bb q,\bb p)\end{pmatrix}\right) \\
&= \partial_{\bb q} H(\bb q,\bb p)\cdot\dot{\bb q}+\partial_{\bb p} H(\bb q,\bb p)\cdot\dot{\bb p} \\
&= D_t H(\bb q, \bb p).
\] Moreover, \eqref{multCond2} is also satisfied since $\Lambda \bb f = 0 = -\partial_{t} H$, where $\bb f:=J\begin{pmatrix} \partial_{\bb p} H & \partial_{\bb q} H\end{pmatrix}^T$.
\end{example}

\begin{example}[Damped Harmonic Oscillator] \label{dhoSys}
\text{ }\\
The damped harmonic oscillator written as an autonomous system is given by,
\begin{equation}
\bb F(x,y,\dot{x},\dot{y}) := 
\begin{pmatrix} \dot{x} \\ \dot{y} \end{pmatrix}-\begin{pmatrix} y \\ -\frac{1}{m} \left(\gamma y+\kappa x\right)\end{pmatrix} = \bb 0, \label{dhoSysEqn}
\end{equation} where $m$ is the mass of an object attached to a spring with the spring constant $\kappa$ and the damping coefficient $\gamma$. In \cite{wan15a}, the time-dependent conserved quantity $\psi(t,x,y)$,
\begin{equation*}
\psi(t,x,y) := \frac{e^{\frac{\gamma}{m}t}}{2}\left(my^2+\gamma x y + \kappa x^2\right), \label{dhoCQ}
\end{equation*} was found for the damped harmonic oscillator. Here we look for the corresponding multiplier as a first order system \eqref{dhoSysEqn}. By \eqref{multCond1},
\begin{equation*}
\Lambda(t,x,y) = \begin{pmatrix} \frac{\partial \psi}{\partial x}(t,x,y) & \frac{\partial \psi}{\partial y}(t,x,y) \end{pmatrix} = \begin{pmatrix} e^{\frac{\gamma}{m}t} \left(\kappa x+\frac{\gamma}{2}y\right)& e^{\frac{\gamma}{m}t} \left(\frac{\gamma}{2}x+my\right)\end{pmatrix}. \label{dhoMult}
\end{equation*} It follows that for any $x, y\in C^1(I\rightarrow \mathbb{R})$
\[
&\Lambda(t,x,y)\bb F(x,y,\dot{x},\dot{y}) = e^{\frac{\gamma}{m}t}\left(\left(\kappa x+\frac{\gamma}{2}y\right)(\dot{x}-y)+\left(my+\frac{\gamma}{2}x\right)\left(\dot{y}+\frac{\gamma}{m}y+\frac{\kappa}{m}x\right)\right)\\
&\hskip 4mm= e^{\frac{\gamma}{m}t}\left(\frac{\gamma}{2m}\left(m y^2+\gamma x y + \kappa x^2\right)+my\dot{y}+\frac{\gamma}{2}(\dot{x}y+x\dot{y})+\kappa x\dot{x}\right) = D_t \psi(t,x,y).
\]
Moreover, one can verify that \eqref{multCond2} is indeed satisfied, since
\[
\Lambda(t,x,y)\begin{pmatrix} y \\ -\frac{1}{m} \left(\gamma y+\kappa x\right)\end{pmatrix} &= -\frac{e^{\frac{\gamma}{m}t}}{2}\frac{\gamma}{m}\left(m y^2+\gamma xy+\kappa x^2\right) = -\partial_t\psi(t,x,y).
\]
\end{example}

\subsection{Local solvability of $\bb f$}
\text{ }\\
There is another form of condition \eqref{multCond2} which will be useful in application. For this, we need some mild assumptions on the conserved quantity $\bb \psi$. 

\begin{definition}
Let $\bb \psi\in C^1(I\times U\rightarrow \mathbb{R}^m)$ be a conserved quantity of $\bb F$. The components of $\bb \psi$ are linearly independent on $I\times U$ if $\partial_{\bb x} \bb \psi$ has full row rank on $I\times U$.
\end{definition}
Note that since $\bb x$ has at most $n$ components, the Jacobian $\partial_{\bb x} \bb \psi$ can have full row rank only if $m\leq n$. We now derive a theorem on local solvability of components of $\bb f$ using conditions \ref{multCond1} and \ref{multCond2}.

\begin{theorem}[Local solvability of $\bb f$]\label{localSolveF}
Let $n,m \in \mathbb{N}$ with $1\leq m\leq n$ and let $\bb \psi\in C^1(I\times U\rightarrow \mathbb{R}^m)$ be a linearly independent conserved quantity of $\bb F$ on $I\times U$. \\If $m\leq n-1$, then for any $(s,\bb y)\in I\times U$, there exist open balls $B_R(s)\times B_R(\bb y)\subset I\times U$ around $(s,\bb y)$ and a $n\times n$ permutation matrix $P$ such that for $(t,\bb x)\in B_R(s) \times B_R(\bb y)$,
\[
(\Lambda P^T)(t,\bb x) &= \begin{pmatrix}\tilde{\Lambda}(t,\bb x)&\Sigma(t,\bb x)\end{pmatrix}, \\
(P\bb f)(t,\bb x) &= \begin{pmatrix}\tilde{\bb f}(t,\bb x) \\ \bb g(t,\bb x)\end{pmatrix},
\] where $\tilde{\Lambda} \in C(B_R(s) \times B_R(\bb y)\rightarrow M_{m\times m}(\mathbb{R}))$ is invertible, $\Sigma \in C(I\times U\rightarrow$ \\$M_{m\times (n-m)}(\mathbb{R}))$ and 
$\tilde{\bb f} \in C^{p-1}(I\times U\rightarrow\mathbb{R}^m), \bb g \in C^{p-1}(I\times U\rightarrow\mathbb{R}^{n-m})$ satisfying,
\begin{align}
\tilde{\bb f}(t,\bb x) = -\left[\tilde{\Lambda}(t,\bb x)\right]^{-1}\left(\frac{}{}\partial_t\bb \psi(t,\bb x)+\Sigma(t,\bb x) \bb g(t,\bb x)\right). \label{contSolveF}
\end{align} In the case $m=n$, $\bb f$ can be solved globally on $I\times U$ given by,
\begin{equation}
\bb f(t,\bb x) = -\left[\Lambda(t,\bb x)\right]^{-1}\partial_t\bb \psi(t,\bb x). \label{contSolveFFull}
\end{equation}
\end{theorem}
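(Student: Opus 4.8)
The plan is to exploit the two multiplier conditions \eqref{multCond1} and \eqref{multCond2} supplied by the Correspondence Theorem (Theorem \ref{corrThm}) and to treat the algebraic relation \eqref{multCond2} as a pointwise linear system for the unknown $\bb f$. Since $\bb \psi$ is a conserved quantity of $\bb F$, Theorem \ref{corrThm} gives its unique multiplier $\Lambda(t,\bb x) = \partial_{\bb x}\bb\psi(t,\bb x)$, and \eqref{multCond2} reads $\Lambda(t,\bb x)\bb f(t,\bb x) = -\partial_t\bb\psi(t,\bb x)$. By hypothesis the components of $\bb\psi$ are linearly independent, so $\Lambda = \partial_{\bb x}\bb\psi$ has full row rank $m$ at every point of $I\times U$. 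The theorem is then simply the statement that this $m\times n$ system can be solved for $\bb f$, globally when $m=n$ and locally after a column selection when $m<n$.

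First I would dispose of the square case $m=n$. Here $\Lambda(t,\bb x)$ is an $n\times n$ matrix of full rank, hence invertible for every $(t,\bb x)\in I\times U$, and left-multiplying \eqref{multCond2} by $\Lambda^{-1}$ yields the global formula \eqref{contSolveFFull} at once. For the underdetermined case $m\leq n-1$, fix $(s,\bb y)$. Because $\Lambda(s,\bb y)$ has rank $m$, some choice of $m$ of its $n$ columns forms an invertible $m\times m$ submatrix; let $P$ be the $n\times n$ permutation matrix moving these columns to the front, so that $\Lambda(s,\bb y)P^T = \begin{pmatrix}\tilde\Lambda(s,\bb y) & \Sigma(s,\bb y)\end{pmatrix}$ with $\tilde\Lambda(s,\bb y)$ invertible. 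Since $\Lambda$ is continuous and the determinant depends continuously on the entries, $\det\tilde\Lambda(t,\bb x)\neq 0$ persists on a ball $B_R(s)\times B_R(\bb y)\subset I\times U$. Note that the splitting $\Lambda P^T = \begin{pmatrix}\tilde\Lambda & \Sigma\end{pmatrix}$ itself is defined on all of $I\times U$, whereas the invertibility of $\tilde\Lambda$ is only guaranteed on the ball, which is exactly why the conclusion is local.

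With $P$ fixed I would then use the orthogonality $P^TP = I$ of the permutation matrix to rewrite \eqref{multCond2} as $(\Lambda P^T)(P\bb f) = -\partial_t\bb\psi$. Writing $P\bb f = \begin{pmatrix}\tilde{\bb f} \\ \bb g\end{pmatrix}$ for the induced splitting of $\bb f$ into its first $m$ and last $n-m$ permuted components, this becomes the block identity $\tilde\Lambda\,\tilde{\bb f} + \Sigma\,\bb g = -\partial_t\bb\psi$, and left-multiplication by $\tilde\Lambda^{-1}$ on the ball produces \eqref{contSolveF}. The regularity assertions then follow immediately: $\tilde\Lambda$ and $\Sigma$ are continuous as submatrices of the continuous $\Lambda$, while $\tilde{\bb f}$ and $\bb g$ are $C^{p-1}$ because they are components of $P\bb f$ with $\bb f\in C^{p-1}$. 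In particular, \eqref{contSolveF} is only an explicit expression for the already-$C^{p-1}$ quantity $\tilde{\bb f}$, so no smoothness is lost in inverting the merely continuous matrix $\tilde\Lambda$.

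I do not anticipate a serious obstacle, as the result is linear algebra applied pointwise to \eqref{multCond2}. The one place requiring care is the transition from pointwise rank $m$ to a genuine local factorization: selecting the invertible $m\times m$ column block and propagating its invertibility from the single point $(s,\bb y)$ to a full ball via continuity of the determinant. This is precisely the step that prevents a global statement when $m<n$ and accounts for the appearance of the permutation $P$ and the ball $B_R(s)\times B_R(\bb y)$.
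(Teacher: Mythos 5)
Your proposal is correct and follows essentially the same route as the paper's proof: identify $\Lambda=\partial_{\bb x}\bb\psi$ via \eqref{multCond1}, use the full-row-rank hypothesis to select an invertible $m\times m$ minor at $(s,\bb y)$, propagate its invertibility to a ball by continuity of the determinant, and then solve the permuted block form of \eqref{multCond2} by inverting $\tilde\Lambda$ (with the $m=n$ case handled globally by direct inversion of $\Lambda$). Your added remarks on where the splitting is globally defined versus where invertibility holds, and on the regularity of $\tilde{\bb f}$ and $\bb g$, are accurate refinements of the same argument.
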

\begin{proof} Let $1\leq m\leq n-1$ and fix any $(s,\bb y)\in I\times U$. Since $\partial_{\bb x} \bb \psi$ has full row rank on $I\times U$ and $\Lambda = \partial_{\bb x} \bb \psi$ by condition \eqref{multCond1}, there must be a $m\times m$ minor $\tilde{\Lambda}$ of $\Lambda$ such that $\det(\tilde{\Lambda}(s,\bb y))\neq 0$. By continuity of $\tilde{\Lambda}$ and the determinant function, there exists open balls $B_R(s) \times B_R(\bb y)\subset I\times U$ around $(s,\bb y)$ so that $\det(\tilde{\Lambda}(t,\bb x))\neq 0$ for all $(t,\bb x)\in B_R(s) \times B_R(\bb y)$. Reorder the columns of $\Lambda$ so that the invertible minor $\tilde{\Lambda}$ is on the first $m$ columns; i.e.~there is a permutation matrix $P$ such that $\Lambda P^T = \begin{pmatrix}\tilde{\Lambda}&\Sigma\end{pmatrix}$ with $\tilde{\Lambda}\in M_{m\times m}(\mathbb{R})$ invertible on $B_R(s) \times B_R(\bb y)$. Thus by condition \eqref{multCond2}, for $(t,\bb x)\in B_R(s) \times B_R(\bb y)$,
\[
&-\partial_t \bb \psi = \Lambda \bb f = \left(\Lambda P^T\right) \left(P\bb f\right) = \begin{pmatrix}\tilde{\Lambda}&\Sigma\end{pmatrix} \begin{pmatrix}\tilde{\bb f} \\ \bb g\end{pmatrix}= \tilde{\Lambda}\tilde{\bb f} + \Sigma \bb g, \text{ with }\tilde{\Lambda} \text{ invertible}.
\] Solving for $\tilde{\bb f}$ by inverting $\tilde{\Lambda}$ shows \eqref{contSolveF}. In the case when $m=n$, $\Lambda=\partial_{\bb x} \bb \psi$ is an invertible square matrix on $I\times U$, which implies \eqref{contSolveFFull}.
\end{proof}

Interestingly, for \emph{non-trivial} first order quasilinear ODEs, \eqref{contSolveFFull} implies there can be at most $n-1$ linearly independent components of the form $\bb \psi(\bb x)$\footnote{This was remarked in \cite{wan16} for autonomous systems with time-independent conserved quantities.}.
\begin{corollary}
Let $\bb \psi\in C^1(U\rightarrow \mathbb{R}^m)$ be a time independent conserved quantity of $\bb F$ with linearly independent components on $U$. Then either $m\leq n-1$, or $\bb f(t,\bb x) = 0$ for all $(t, \bb x)\in I\times U$, in which case $\bb F(t,\bb x, \dot{\bb x}):=\dot{\bb x}(t)=0$ or $\bb x(t)=\bb x_0$.
\end{corollary}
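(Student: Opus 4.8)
The plan is to reduce everything to the $m=n$ case of Theorem~\ref{localSolveF} and then exploit the time-independence of $\bb\psi$. First I would record the basic dichotomy forced by the rank hypothesis. Since the components of $\bb\psi$ are linearly independent on $U$, the Jacobian $\partial_{\bb x}\bb\psi$ has full row rank, and as already observed just before Theorem~\ref{localSolveF}, an $m\times n$ matrix can have full row rank only when $m\leq n$. Hence exactly one of two situations occurs: either $m\leq n-1$, which is precisely the first alternative in the statement, or $m=n$. It therefore suffices to treat the remaining case $m=n$ and show that it forces $\bb f\equiv \bb 0$.

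In the case $m=n$, I would check that the hypotheses of Theorem~\ref{localSolveF} apply with $\bb\psi$ viewed as a ($t$-independent) conserved quantity on $I\times U$: its full-row-rank property on $U$ persists on $I\times U$, and by \eqref{multCond1} the associated multiplier is $\Lambda=\partial_{\bb x}\bb\psi$, a square $n\times n$ matrix of full rank, hence invertible on all of $I\times U$. The theorem then supplies the global formula \eqref{contSolveFFull}, namely $\bb f(t,\bb x)=-[\Lambda(t,\bb x)]^{-1}\partial_t\bb\psi(t,\bb x)$. The crucial step is now to invoke the hypothesis that $\bb\psi$ is \emph{time-independent}: writing $\bb\psi=\bb\psi(\bb x)$ gives $\partial_t\bb\psi\equiv \bb 0$, so the right-hand side vanishes identically and $\bb f(t,\bb x)=\bb 0$ for all $(t,\bb x)\in I\times U$.

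Finally, substituting $\bb f\equiv \bb 0$ into \eqref{odeEqn} reduces the system to $\dot{\bb x}(t)=\bb 0$, whose unique solution through $\bb x_0$ is the constant trajectory $\bb x(t)=\bb x_0$; this yields the conclusion in the second alternative. I do not expect any genuine obstacle here, as the corollary is essentially a direct specialization of Theorem~\ref{localSolveF} to the autonomous-conserved-quantity setting. The only point requiring care is confirming that the invertibility of $\Lambda=\partial_{\bb x}\bb\psi$ is \emph{global} on $I\times U$ (so that \eqref{contSolveFFull} applies everywhere rather than merely locally), but for $m=n$ this is exactly what full row rank of $\partial_{\bb x}\bb\psi$ guarantees, so the argument closes without difficulty.
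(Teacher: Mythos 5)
Your proposal is correct and follows essentially the same route as the paper's proof: reduce to the case $m=n$ via the rank bound $m\leq n$, then apply the global formula \eqref{contSolveFFull} from Theorem~\ref{localSolveF} together with $\partial_t\bb\psi=\bb 0$ to conclude $\bb f\equiv\bb 0$. The extra details you supply (global invertibility of $\Lambda=\partial_{\bb x}\bb\psi$ and the resulting constant trajectories) are consistent with what the paper leaves implicit.
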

\begin{proof}
If suffices to show the case for $m=n$, as $m\leq n$ by hypothesis of linear independence. Since $\partial_t \bb \psi=\bb 0$, \eqref{contSolveFFull} implies $ \bb f(t,\bb x)=\bb 0$ for all $(t,\bb x)\in I\times U$.
\end{proof}

\section{The multiplier method}
\label{sec:multMethod}
\text{ }\\
Combining the theories developed so far for conservation law multipliers of Section~\ref{sec:multTheory} and the divided difference calculus introduced in Appendix \ref{sec:divDiff}, we now demonstrate a systematic way to construct conservative schemes for first order quasilinear ODEs. 

\subsection{Sufficient conditions for conservative schemes of arbitrary order}
Let $\{t^k\in \mathbb{R}\}_{k\in \mathbb{N}}$ be a discrete set of time steps with $t^k<t^{k+1}$ such that there exists a largest time step size $\tau = \sup_{k\in \mathbb{N}} (t^{k+1}-t^k)<\infty$. We denote discrete approximations of $\bb x(t^k)\in U$ as $\bb x^k \in U$. Specifically, we will be focusing on (nonlinear) multi-step methods.

\begin{definition}
Let $r\in\mathbb{N}$ and $W$ be a finite dimensional normed vector space. $f^\tau$ is called a $r$-step function if $f^\tau\colon I\times U^{r+1}\rightarrow W$, where $U^{r+1}$ is the Cartesian product of $r+1$ copies of $U$. The value of $f^\tau$ at $(t^k, \bb x^{k+1}, \dots, \bb x^{k-r+1})\in I\times U^{r+1}$ is denoted by $f^\tau(t^k,\bb x^{k+1},\dots,\bb x^{k-r+1})\in W$.
\end{definition}

\begin{definition}
Let $p,q \in \mathbb{N}$. A $r$-step function $f^\tau\colon C^{p+q}(I\times U^r \rightarrow W)$ is consistent of order $q$ to a function $f\in C^{p+q}(I\times U\times U^{(1)}\times\cdots\times U^{(p)}\rightarrow W)$ if for any $\bb x\in C^{p+q}(I\times U\rightarrow U)$, there exists a constant $C_f>0$ independent of $\tau$ so that
\[
\norm{f(t^k,\bb x(t^k),\dots, {\bb x}^{(p)}(t^k))-f^\tau(t^k,\bb x(t^{k+1}),\dots, \bb x(t^{k-r+1}))}_W \leq C_{f}\norm{\bb x}_{C^{p+q}(I^k)} \tau^q,
\]where $I^k:=[t^{k-r+1},t^{k+1}]$ and $\displaystyle \norm{\bb x}_{C^{r}(I^k)} := \max_{0\leq i\leq r} \norm{\bb x^{(i)}}_{L^\infty(I^k)}$. If so, we simply write $f^\tau = f + \mathcal{O}(\tau^q)$.
\end{definition}

In the following part, $W$ is either $\mathbb{R}^m$ with the usual Euclidean norm or $M_{m\times n}(\mathbb{R})$ with the operator norm. Before stating the main theorem for constructing conservative schemes, we need a few more definitions.

\begin{definition}
Let $\bb F^\tau$ be a consistent $r$-step function to $\bb F$ and $\bb \psi^\tau$ be a consistent $(r-1)$-step function to $\bb \psi$. Denote $\bb \psi^\tau_k := \bb \psi^\tau(t^{k}, \bb x^{k},\dots, \bb x^{k-r+1})$. We say the $r$-step method $\bb F^\tau$ is conservative in $\bb \psi^\tau$ if $\bb \psi^\tau_{k+1}=\bb \psi^\tau_k$, whenever $\bb x^{k+1}$ satisfies $\bb F^\tau(t^k, \bb x^{k+1},\dots, \bb x^{k-r+1})=\bb 0$.
\end{definition}

\begin{definition}
Let $D^\tau_t \bb \psi$ be a consistent $r$-step function to $D_t \bb \psi$ and $\bb \psi^\tau$ be a consistent $(r-1)$-step function to $\bb \psi$. We say that $D^\tau_t \bb \psi$ is constant-compatible with $\bb \psi^\tau$ if $D^\tau_t \bb \psi(t^k, \bb x^{k+1},\dots, \bb x^{k-r+1})=\bb 0$ implies $\bb \psi^\tau_{k+1}=\bb \psi^\tau_k$,
\end{definition}

In other words, constant-compatibility means the discrete total derivative $D^\tau_t \bb \psi$ preserves the vanishing derivative of constant functions. We now prove a key theorem for constructing conservative schemes of arbitrary orders, which generalizes the case of autonomous systems with time-independent conserved quantities shown in \cite{wan16}.

\begin{theorem}[Conservative discretizations for quasilinear first order ODEs]\text{ }\\ \label{mainTheorem}Suppose $\bb f^\tau, D^\tau_t \bb x, D^\tau_t \bb \psi, \partial^\tau_t \bb \psi, \Lambda^\tau$ are $r$-step functions and consistent of order $q$ respectively to $\bb f, \dot{\bb x}, D_t\bb \psi, \partial_t \bb \psi, \Lambda$, where $\Lambda$ is a conservation law multiplier of $\bb F$ associated with the conserved quantity $\bb \psi$. Assume $D^\tau_t \bb \psi$ is constant-compatible with a $(r-1)$-step function $\bb \psi^\tau$. Also assume $\bb f^\tau, D^\tau_t \bb x, D^\tau_t \bb \psi, \partial^\tau_t \bb \psi, \Lambda^\tau$ satisfy
\begin{subequations}
\begin{align}
\Lambda^\tau D^\tau_t \bb x &= D^\tau_t \bb \psi - \partial^\tau_t \bb \psi, \label{discCond1}\\
\Lambda^\tau \bb f^\tau &= -\partial^\tau_t \bb \psi. \label{discCond2}
\end{align}
\end{subequations}
Then the $r$-step method defined by
\begin{align} \label{discFormula}
\bb F^\tau(t^k, \bb x^{k+1},\dots,\bb x^{k-r+1}):= &D^\tau_t \bb x(t^k, \bb x^{k+1},\dots,\bb x^{k-r+1})\\
&\hskip 4mm - \bb f^\tau(t^k, \bb x^{k+1},\dots,\bb x^{k-r+1}) = \bb 0, \nonumber
\end{align} is consistent of at least $q$-th order to $\bb F$ and $\bb F^\tau$ is conservative in $\bb \psi^\tau$. Moreover, for any $\bb x\in C^q(I\rightarrow\mathbb{R}^n)$,
\begin{subequations}
\begin{align}
\Lambda^\tau D^\tau_t \bb x -D^\tau_t \bb \psi +\partial^\tau_t \bb \psi &= \mathcal{O}(\tau^q), \label{consCond1} \\
\Lambda^\tau \bb f^\tau+\partial^\tau_t \bb \psi &= \mathcal{O}(\tau^q). \label{consCond2}
\end{align}
\end{subequations}
\end{theorem}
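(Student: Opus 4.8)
The plan is to prove the three conclusions separately, since each rests on a different subset of the hypotheses. \emph{For the consistency of $\bb F^\tau$}, I would simply subtract and use the triangle inequality: $\bb F^\tau - \bb F = (D^\tau_t \bb x - \dot{\bb x}) - (\bb f^\tau - \bb f)$, where the two differences are each $\mathcal{O}(\tau^q)$ by the assumed consistency of $D^\tau_t \bb x$ to $\dot{\bb x}$ and of $\bb f^\tau$ to $\bb f$; hence $\bb F^\tau = \bb F + \mathcal{O}(\tau^q)$ and the scheme is consistent of order at least $q$.

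\emph{For the conservation property}, suppose $\bb x^{k+1}$ solves $\bb F^\tau(t^k,\bb x^{k+1},\dots,\bb x^{k-r+1}) = \bb 0$, so that $D^\tau_t \bb x = \bb f^\tau$ at this discrete argument. I would then chain the two discrete identities: by \eqref{discCond1}, $D^\tau_t \bb \psi = \Lambda^\tau D^\tau_t \bb x + \partial^\tau_t \bb \psi$, and substituting $D^\tau_t \bb x = \bb f^\tau$ followed by \eqref{discCond2} gives $D^\tau_t \bb \psi = \Lambda^\tau \bb f^\tau + \partial^\tau_t \bb \psi = -\partial^\tau_t \bb \psi + \partial^\tau_t \bb \psi = \bb 0$. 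Constant-compatibility of $D^\tau_t \bb \psi$ with $\bb \psi^\tau$ then forces $\bb \psi^\tau_{k+1} = \bb \psi^\tau_k$, which is precisely exact conservation.

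\emph{For the two $\mathcal{O}(\tau^q)$ estimates}, which I read as the statement that the discrete multiplier relations approximate the continuous ones to order $q$, I would evaluate along an arbitrary $\bb x \in C^q(I\rightarrow\mathbb{R}^n)$ and compare each discrete combination to its continuous counterpart. Those counterparts vanish identically: Theorem \ref{corrThm} supplies the identity $\Lambda\dot{\bb x} = D_t\bb \psi - \partial_t\bb \psi$ valid for all $C^1$ functions, as well as \eqref{multCond2} in the form $\Lambda\bb f + \partial_t\bb \psi = \bb 0$. It then remains to show the discrete expressions are order-$q$ consistent to these vanishing continuous expressions. The only genuinely nonlinear terms are the matrix-vector products $\Lambda^\tau D^\tau_t \bb x$ and $\Lambda^\tau \bb f^\tau$; for the first I would split $\Lambda^\tau D^\tau_t \bb x - \Lambda\dot{\bb x} = \Lambda^\tau(D^\tau_t \bb x - \dot{\bb x}) + (\Lambda^\tau - \Lambda)\dot{\bb x}$, bound $\norm{\Lambda^\tau}$ by $\norm{\Lambda} + \mathcal{O}(\tau^q)$ and $\norm{\dot{\bb x}}$ on the compact interval $I^k$, so each summand is $\mathcal{O}(\tau^q)$; adding the (linear, hence immediate) discretization errors of $D^\tau_t \bb \psi$ and $\partial^\tau_t \bb \psi$ produces \eqref{consCond1}, and the parallel computation produces \eqref{consCond2}.

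The first two parts are short bookkeeping once the discrete conditions are available, so \emph{the main obstacle is the product estimate} in the last part: I must keep the $\mathcal{O}(\tau^q)$ constant controlled by $\norm{\bb x}_{C^q(I^k)}$, which requires local boundedness of the continuous factors $\Lambda$ and $\dot{\bb x}$ on the compact sets swept out along $\bb x$ over $I^k$. This is exactly the setting in which the consistency estimates are framed, ensuring that $\Lambda^\tau$ and $D^\tau_t \bb x$ stay uniformly bounded as $\tau\to 0$ and that the cross terms do not spoil the order.
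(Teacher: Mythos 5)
Your proposal is correct and follows essentially the same route as the paper: consistency by the triangle inequality, conservation by chaining \eqref{discCond1} and \eqref{discCond2} at a discrete solution and invoking constant-compatibility, and the order-$q$ estimates by comparing against the continuous identities $\Lambda\dot{\bb x}=D_t\bb\psi-\partial_t\bb\psi$ and $\Lambda\bb f=-\partial_t\bb\psi$ from Theorem~\ref{corrThm} with a standard add-and-subtract product bound. The only (immaterial) difference is which factor you perturb in the product decomposition — you write $\Lambda^\tau(D^\tau_t\bb x-\dot{\bb x})+(\Lambda^\tau-\Lambda)\dot{\bb x}$ and bound $\norm{\Lambda^\tau}$, whereas the paper writes $(\Lambda^\tau-\Lambda)D^\tau_t\bb x+\Lambda(D^\tau_t\bb x-\dot{\bb x})$ and bounds $\norm{D^\tau_t\bb x}$.
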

\begin{proof} It is clear from the triangle inequality that if $D^\tau_t \bb x= \dot{\bb x}+\mathcal O(\tau^q)$ and $\bb f^\tau= \bb f+\mathcal O(\tau^q)$, then $\bb F^\tau= \bb F+\mathcal O(\tau^q)$. Let $\bb x^{k+1}$ be a solution to $\bb F^\tau=0$. Then by \eqref{discCond1} and \eqref{discCond2},
\[
0 = \Lambda^\tau(D^\tau_t \bb \psi - \bb f^\tau) = \left(D^\tau_t \bb \psi - \partial^\tau_t \bb \psi\right) +\partial^\tau_t \bb \psi = D^\tau_t \bb \psi.
\] Since $D^\tau_t \bb \psi$ is constant-compatible with $\bb \psi^\tau$, this implies $\bb \psi^\tau_{k+1}=\bb \psi^\tau_k$. In other words, $\bb F^\tau$ is conservative in $\bb \psi^\tau$. 
To show \eqref{consCond1}, for any $\bb x\in C^1(I\rightarrow\mathbb{R}^n)$ note that $\Lambda \dot{\bb x} = \partial_{\bb x} \bb \psi \cdot\dot{\bb x}$ by \eqref{multCond1} and $\partial_{\bb x} \bb \psi \cdot \dot{\bb x} = D_t \bb \psi -\partial_t\bb \psi$ by the chain rule. Thus,
\[
&\norm{\Lambda^\tau D^\tau_t \bb x-D^\tau_t \bb \psi +\partial^\tau_t \bb \psi} = \norm{\left(\Lambda^\tau D^\tau_t \bb x-\Lambda \dot{\bb x}\right)+\left(\partial_{\bb x} \bb \psi \cdot\dot{\bb x}-D^\tau_t \bb \psi +\partial^\tau_t \bb \psi\right)}\\
&\hskip 2mm\leq \norm{\left(\Lambda^\tau-\Lambda\right) D_t^\tau \bb x + \Lambda(D_t^\tau \bb x-\dot{\bb x})}+ \norm{\left(D_t \bb \psi-D_t^\tau \bb \psi\right) + \left(\partial^\tau_t \bb \psi-\partial_t \bb \psi\right)}\\
&\hskip 2mm\leq \norm{\Lambda^\tau-\Lambda}(\norm{\dot{\bb x}}+\norm{D_t^\tau \bb x-\dot{\bb x}})+\norm{\Lambda}\norm{D_t^\tau \bb x-\dot{\bb x}}+ \norm{D_t \bb \psi-D_t^\tau \bb \psi}\\
&\hskip 6mm+\norm{\partial^\tau_t \bb \psi-\partial_t \bb \psi} = \mathcal{O}(\tau^q),
\] where the last step follows from the order $q$ consistency of $\Lambda^\tau, D_t^\tau \bb x, \partial_t^\tau \bb \psi, D_t^\tau \bb \psi$ and that $\norm{\dot{\bb x}(t)}, \norm{\Lambda(t, \bb x(t))}$ are bounded uniformly on $t\in I^k$ by continuity of $\bb x$ and $\Lambda$. A similar estimate can be carried out to show \eqref{consCond2}.
\end{proof}
\begin{remark} Although we have only shown consistency of at least order $q$, it is possible for \eqref{discFormula} to be of higher order than $q$, as Example \ref{sec:RBRotDisc} in Section \ref{sec:CM_examples} illustrates.
\end{remark}

\begin{remark}The average vector field method was introduced in \cite{QuiMcL08} as an energy-preserving discretization for Hamiltonian systems. This method can be viewed as a special case of the conditions \eqref{discCond1} and \eqref{discCond2} applied to Hamiltonian systems. In particular, since $\psi(t,\bb x):=H(\bb x)$ is time independent, the average vector field method is equivalent to the following choices of $D^\tau_t \bb x, D^\tau_t H, \partial^\tau_t H, \bb f^\tau, \Lambda^\tau$:
\[
D^\tau_t \bb x &:= \frac{\bb x^{k+1}-\bb x^k}{\tau}, & H^\tau(\bb x^k) &:= H(\bb x^k)\\
D^\tau_t H &:= \frac{H(\bb x^{k+1})-H(\bb x^k)}{\tau} & \bb f^\tau &:= J\partial_{\bb x}^\tau H\\
\partial^\tau_t H &:= 0 & \Lambda^\tau &:= \partial_{\bb x}^\tau H
\] where $J$ is the $(2n)\times(2n)$ skew-symmetric matrix from \eqref{skewSymMat} and 
\[
\partial_{\bb x}^\tau H(\bb x^{k+1}, \bb x^k):= \int_0^1 \partial_{\bb x}H\left(s(\bb x^{k+1}-\bb x^{k})+\bb x^{k}\right)ds,
\] which is consistent to $\Lambda=\partial_{\bb x} H$ from \eqref{HamMult}. Thus, \eqref{discCond1} and \eqref{discCond2} are satisfied since
\[
\Lambda^\tau D^\tau_t \bb x &=  \frac{1}{\tau}\int_0^1 \partial_{\bb x}H\left(s(\bb x^{k+1}-\bb x^{k})+\bb x^{k}\right)\cdot(\bb x^{k+1}-\bb x^k)ds \\
&= \frac{1}{\tau}\int_0^1 \frac{d}{ds} H\left(s(\bb x^{k+1}-\bb x^{k})+\bb x^{k}\right)ds\\
&= \frac{H(\bb x^{k+1})-H(\bb x^k)}{\tau} = D^\tau_t H = D^\tau_t H - \partial^\tau_t H, \\
\Lambda^\tau \bb f^\tau &= \partial_{\bb x}^\tau H \cdot J\partial_{\bb x}^\tau H = 0 = -\partial^\tau_t H.
\]
In Section \ref{firstOrdCM}, conservative discretizations for quasilinear first order systems are derived using divided differences. In particular, in contrast to the average vector field method, the multiplier method does not require computation of integrals and can be directly applied to non-Hamiltonian systems.
\end{remark}

\subsection{Local solvability of $\bb f^\tau$}

Note that conditions \eqref{discCond1} and \eqref{discCond2} are discrete analogues of conditions \eqref{multCond1} and \eqref{multCond2}. Moreover, given a consistent discrete multiplier $\Lambda^\tau$ to $\Lambda$, we show that condition \eqref{discCond2} can be satisfied locally in $I\times U$ for sufficiently small $\tau$ using the local invertibility of discrete multipliers of Lemma \ref{localInverse} presented in the Appendix \ref{sec:localInverse}. 

\begin{theorem}[Local solvability of $\bb f^\tau$]
\label{discLocalSolve}
Let $(s,\bb y)\in I\times U$ and $\Sigma, \tilde{\bb f}, \bb g, P$ and an invertible matrix $\tilde{\Lambda}$ be as given by Theorem \ref{localSolveF}. Suppose $\Lambda^\tau, \partial_t^\tau \bb \psi, \bb g^\tau$ are consistent of order $q$ to $\Lambda, \partial_t \bb \psi, \bb g$ and $\{\tilde{\Lambda}^\tau\}_{0<\tau<\tau_0}$ is equicontinuous on $I\times U^{r+1}$. \\ If $1\leq m\leq n-1$, define $\tilde{\Lambda}^\tau \in M_{m\times m}(\mathbb{R})$ and $\Sigma^\tau \in M_{(n-m)\times m}(\mathbb{R})$ by,
\begin{align}
\Lambda^\tau P^T := \begin{pmatrix} \tilde{\Lambda}^\tau & \Sigma^\tau \end{pmatrix}.
\end{align} Then for sufficiently small $\tau$ and some $r>0$, $\tilde{\Lambda}^\tau$ is invertible on $B_r(s)\times B_r(\bb y)\times \cdots \times B_r(\bb y)\subset I\times U^{r+1}$ and
\begin{align}
\bb f^\tau := P^T\begin{pmatrix} -\left[\tilde{\Lambda}^\tau\right]^{-1}\left(\partial^\tau_t \bb \psi + \Sigma^\tau \bb g^\tau \right) \\ \bb g^\tau \end{pmatrix}, \label{discFEqn}
\end{align} satisfies condition \eqref{discCond2} and is consistent of order $q$ to $\bb f$.
In the case $m=n$, for sufficiently small $\tau$ and some $r>0$, $\Lambda^\tau$ is invertible on $B_r(s)\times B_r(\bb y)\times \cdots \times B_r(\bb y)\subset I\times U^{r+1}$ and
\begin{equation}
\bb f^\tau= -\left[\Lambda^\tau\right]^{-1}\partial_t^\tau\bb \psi, \label{discFEqnFull}
\end{equation} is consistent of order $q$ to $\bb f$.
\end{theorem}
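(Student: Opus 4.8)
The plan is to follow the architecture of the continuous Theorem~\ref{localSolveF}, replacing each continuous object by its consistent discrete counterpart, and to isolate the two genuinely new difficulties created by the discretization: proving that the minor $\tilde{\Lambda}^\tau$ is invertible \emph{uniformly} in $\tau$ on a fixed neighborhood, and controlling its inverse inside the consistency estimate. I would organize the argument into three steps: (i) local invertibility of $\tilde{\Lambda}^\tau$ together with a uniform bound on $\|[\tilde{\Lambda}^\tau]^{-1}\|$; (ii) a purely algebraic verification that the $\bb f^\tau$ defined in \eqref{discFEqn} satisfies \eqref{discCond2}; and (iii) the order-$q$ consistency estimate $\bb f^\tau=\bb f+\mathcal O(\tau^q)$. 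I would treat only $1\le m\le n-1$ in detail; the case $m=n$ is the same argument with $P=I$, with $\Sigma^\tau$ and $\bb g^\tau$ absent and $\tilde{\Lambda}^\tau=\Lambda^\tau$, so that \eqref{discFEqn} collapses to \eqref{discFEqnFull}.

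For step (i), the starting point is that $\tilde{\Lambda}(s,\bb y)$ is invertible by Theorem~\ref{localSolveF}. Since $\Lambda^\tau=\Lambda+\mathcal O(\tau^q)$ and $\tilde{\Lambda}^\tau$ is the block of $\Lambda^\tau P^T$ cut out by the \emph{same} permutation $P$, the family $\tilde{\Lambda}^\tau$ tends to $\tilde{\Lambda}$ as $\tau\to0$. The subtlety is that consistency only controls $\tilde{\Lambda}^\tau$ along collapsing samples of a single smooth trajectory, whereas invertibility on the product ball $B_r(s)\times B_r(\bb y)\times\cdots\times B_r(\bb y)$ requires control when the $r+1$ spatial arguments lie independently near $\bb y$; this is exactly the gap that the assumed equicontinuity of $\{\tilde{\Lambda}^\tau\}_{0<\tau<\tau_0}$ closes, by upgrading pointwise convergence to convergence uniform on compact subsets of $I\times U^{r+1}$. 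Feeding this into Lemma~\ref{localInverse} produces a radius $r>0$ and a threshold on $\tau$ such that $\det\tilde{\Lambda}^\tau$ is bounded away from zero—hence $\tilde{\Lambda}^\tau$ is invertible—on that product ball, with $\|[\tilde{\Lambda}^\tau]^{-1}\|$ bounded uniformly in $\tau$. I expect this uniform invertibility to be the main obstacle: pointwise consistency is too weak, and it is precisely the equicontinuity hypothesis, routed through Lemma~\ref{localInverse}, that transfers invertibility from the single point $(s,\bb y)$ to an entire neighborhood uniformly in the step size.

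Step (ii) is then a direct block computation. Left-multiplying \eqref{discFEqn} by $\Lambda^\tau$ and using the defining identity $\Lambda^\tau P^T=\begin{pmatrix}\tilde{\Lambda}^\tau&\Sigma^\tau\end{pmatrix}$ gives
\[
\Lambda^\tau \bb f^\tau
&= \begin{pmatrix}\tilde{\Lambda}^\tau & \Sigma^\tau\end{pmatrix}
\begin{pmatrix} -[\tilde{\Lambda}^\tau]^{-1}\left(\partial^\tau_t \bb \psi + \Sigma^\tau \bb g^\tau\right) \\ \bb g^\tau \end{pmatrix} \\
&= -\left(\partial^\tau_t \bb \psi + \Sigma^\tau \bb g^\tau\right) + \Sigma^\tau \bb g^\tau
= -\partial^\tau_t \bb \psi,
\]
which is exactly condition \eqref{discCond2}; note that this cancellation is algebraic and needs only the invertibility established in step~(i).

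Step (iii) compares $\bb f^\tau$ with $\bb f = P^T\begin{pmatrix}\tilde{\bb f}&\bb g\end{pmatrix}^T$, where $\tilde{\bb f}=-\tilde{\Lambda}^{-1}(\partial_t\bb\psi+\Sigma\bb g)$ by \eqref{contSolveF}. The bottom block matches to order $q$ directly, since $\bb g^\tau=\bb g+\mathcal O(\tau^q)$ by hypothesis. For the top block I would first establish $[\tilde{\Lambda}^\tau]^{-1}=\tilde{\Lambda}^{-1}+\mathcal O(\tau^q)$ from the resolvent identity $[\tilde{\Lambda}^\tau]^{-1}-\tilde{\Lambda}^{-1}=[\tilde{\Lambda}^\tau]^{-1}(\tilde{\Lambda}-\tilde{\Lambda}^\tau)\tilde{\Lambda}^{-1}$, bounding the right-hand side by the uniform bound on $\|[\tilde{\Lambda}^\tau]^{-1}\|$ from step~(i), the continuity bound on $\|\tilde{\Lambda}^{-1}\|$, and $\tilde{\Lambda}^\tau=\tilde{\Lambda}+\mathcal O(\tau^q)$. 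Combining this with $\Sigma^\tau=\Sigma+\mathcal O(\tau^q)$, $\partial^\tau_t\bb\psi=\partial_t\bb\psi+\mathcal O(\tau^q)$, and $\bb g^\tau=\bb g+\mathcal O(\tau^q)$ through the same product-and-triangle-inequality bookkeeping already used in the proof of Theorem~\ref{mainTheorem} yields the top block to order $q$; since $P$ is a fixed isometry, $\bb f^\tau=\bb f+\mathcal O(\tau^q)$ follows. The $m=n$ case is a strict simplification: $\Lambda^\tau$ plays the role of $\tilde{\Lambda}^\tau$, steps (i) and (iii) go through verbatim, and \eqref{discFEqnFull} satisfies \eqref{discCond2} and is consistent of order $q$ to $\bb f$ by comparison with \eqref{contSolveFFull}.
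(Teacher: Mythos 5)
Your proposal is correct and follows essentially the same route as the paper: uniform local invertibility of the minor via the equicontinuity hypothesis and Lemma~\ref{localInverse}, the block cancellation $\Lambda^\tau\bb f^\tau=(\Lambda^\tau P^T)(P\bb f^\tau)=-\partial_t^\tau\bb\psi$ for \eqref{discCond2}, and the resolvent identity plus triangle-inequality bookkeeping for the order-$q$ consistency of both blocks. The only cosmetic difference is that the paper treats $m=n$ first and then the general case, while you do the reverse.
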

\begin{proof} First, consider the case when $m=n$. By Theorem \ref{localSolveF}, $\Lambda{(t,\bb x)}$ is invertible on some open balls $B_R(s)\times B_R(\bb y)\subset I\times U$. Without loss of generality, we can choose $R$ so that $B_R(s)\times B_R(\bb y)$ is closed. Thus, by local invertibility of discrete multiplier from Lemma \ref{localInverse}, for sufficiently small $\tau$ and $r\leq R$, $\Lambda^\tau$ is invertible on $B_r(s)\times B_r(\bb y)\times \cdots \times B_r(\bb y)\subset I\times U^{r+1}$ with the uniform bound $\norm{[\tilde{\Lambda}^\tau]^{-1}}\leq C$ for some constant $C>0$ independent of $\tau$. Thus, \eqref{discFEqnFull} is well-defined. To show $\bb f^\tau=\bb f +\mathcal{O}(\tau^q)$, note from \eqref{contSolveFFull} and for any $\bb x\in C(B_r(s)\rightarrow B_r(\bb y))$,
\[
&\norm{-\left[\Lambda^\tau\right]^{-1}\partial_t^\tau\bb \psi-\bb f} = \norm{(\Lambda^{-1}-\left[\Lambda^\tau\right]^{-1})\partial_t^\tau\bb \psi-\Lambda^{-1}(\partial_t^\tau\bb \psi-\partial_t\bb \psi)}\\
&\hskip 8mm\leq \norm{\left[\Lambda^\tau\right]^{-1}(\Lambda^\tau-\Lambda)\Lambda^{-1}\partial_t^\tau\bb \psi}+\norm{\Lambda^{-1}(\partial_t^\tau\bb \psi-\partial_t\bb \psi)}\\
&\hskip 8mm\leq \norm{\left[\Lambda^\tau\right]^{-1}}\norm{\Lambda^\tau-\Lambda}\norm{\Lambda^{-1}}(\norm{\partial_t\bb \psi}+\norm{\partial_t^\tau\bb \psi-\partial_t\bb \psi})+\norm{\Lambda^{-1}}\norm{(\partial_t^\tau\bb \psi-\partial_t\bb \psi)}\\
&\hskip 8mm = \mathcal{O}(\tau^q).
\] The last step follows from the uniform bound of $\norm{\left[\Lambda^\tau\right]^{-1}}$ on $B_r(s)\times B_r(\bb y)\times \cdots \times B_r(\bb y)$, the order $q$ consistency of $\Lambda^\tau, \partial_t^\tau$ and that $\norm{\partial_t \bb \psi(t,{\bb x}(t))}, \norm{\Lambda^{-1}(t, \bb x(t))}$ are bounded uniformly on $t\in B_r(s)$ by continuity of $\bb x$, $\partial_t \bb \psi$ and $\Lambda^{-1}$. \\
The case when $1\leq m\leq n-1$ follows similarly. We only highlight the main steps. Since $\tilde{\Lambda}$ is invertible on some closed balls $B_R(s)\times B_R(\bb y)$ by \eqref{localSolveF}, Lemma \ref{localInverse} implies $\tilde{\Lambda}^\tau$ is invertible on $B_r(s)\times B_r(\bb y)\times \cdots \times B_r(\bb y)$ with the uniform bound $\norm{[\tilde{\Lambda}^\tau]^{-1}}\leq C$ for some constant $C>0$ independent of $\tau$.
Then, $\bb f^\tau$ from \eqref{discFEqn} is well-defined and \eqref{discCond2} is satisfied since,
\[
\Lambda^\tau \bb f^\tau = (\Lambda^\tau P^T) (P\bb f^\tau) = \begin{pmatrix} \tilde{\Lambda}^\tau & \Sigma^\tau \end{pmatrix}\begin{pmatrix} -\left[\tilde{\Lambda}^\tau\right]^{-1}\left(\partial^\tau_t \bb \psi + \Sigma^\tau \bb g^\tau \right)\\ \bb g^\tau \end{pmatrix} = -\partial_t^\tau \bb \psi.
\] 
To show $\bb f^\tau=\bb f +\mathcal{O}(\tau^q)$, note from \eqref{contSolveF} and for any $\bb x\in C(B_r(s)\rightarrow B_r(\bb y))$,
\[
&\norm{-\left[\tilde{\Lambda}^\tau\right]^{-1}\left(\partial^\tau_t \bb \psi + \Sigma^\tau \bb g^\tau\right)-\tilde{\bb f}} = \norm{\tilde{\Lambda}^{-1}\left(\partial_t \bb \psi + \Sigma \bb g\right)-\left[\tilde{\Lambda}^\tau\right]^{-1}\left(\partial^\tau_t \bb \psi + \Sigma^\tau \bb g^\tau\right)} \\
&\hskip 4mm= \norm{\tilde{\Lambda}^{-1}\left(\frac{}{}\left(\partial_t\bb \psi-\partial_t^\tau \bb \psi\right)+\left(\Sigma \bb g-\Sigma^\tau\bb g^\tau\right)\right)+\tilde{\Lambda}^{-1}(\tilde{\Lambda}^\tau-\tilde{\Lambda})[\tilde{\Lambda}^\tau]^{-1}\left(\partial_t^\tau\bb \psi+\Sigma^\tau \bb g^\tau\right)}\\
&\hskip 4mm\leq \norm{\tilde{\Lambda}^{-1}}\left(\norm{\partial_t \bb \psi-\partial^\tau_t \bb \psi}+\norm{\Sigma \bb g-\Sigma^\tau\bb g^\tau}\right)\\
&\hskip 8mm+\norm{\tilde{\Lambda}^{-1}}\norm{\tilde{\Lambda}^\tau-\tilde{\Lambda}}\norm{[\tilde{\Lambda}^\tau]^{-1}}\norm{\partial_t^\tau\bb \psi+\Sigma^\tau \bb g^\tau}=\mathcal{O}(\tau^q),
\]
where the last step follows from the uniform bound of $\norm{[\tilde{\Lambda}^\tau]}^{-1}$ on $B_r(s)\times B_r(\bb y)\times \cdots \times B_r(\bb y)$, from the order $q$ consistency of $\Sigma^\tau, \tilde{\Lambda}^\tau, \partial_t^\tau\bb \psi, \bb g^\tau$ and from the continuity of $\Sigma, \tilde{\Lambda}, \partial_t\bb \psi, \bb g, \tilde{\Lambda}^{-1}, 
\bb x$ so that
\[
\norm{\Sigma \bb g-\Sigma^\tau\bb g^\tau}&\leq \norm{\Sigma}\norm{\bb g-\bb g^\tau}+\norm{\Sigma-\Sigma^\tau}\left(\norm{\bb g}+\norm{\bb g^\tau-\bb g}\right) = \mathcal{O}(\tau^q) \\
\norm{\partial_t^\tau\bb \psi+\Sigma^\tau \bb g^\tau} &\leq \norm{\partial_t\bb \psi+\Sigma \bb g}+\norm{\partial_t \bb \psi-\partial^\tau_t \bb \psi}+\norm{\Sigma}\norm{\bb g-\bb g^\tau}\\
&\hskip 4mm+\norm{\Sigma-\Sigma^\tau}\left(\norm{\bb g}+\norm{\bb g^\tau-\bb g}\right) \leq C_1, \\
\] Combining with the hypothesis that $\bb g^\tau = \bb g+\mathcal{O}(\tau^q)$, it follows that $\bb f^\tau=\bb f+\mathcal{O}(\tau^q)$.
\end{proof}
\begin{remark} Although the local solvability of $\bb f^\tau$ generally results in expressions defined on a smaller domain $B_r(s)\times B_r(\bb y)\times \cdots \times B_r(\bb y)\subset I\times U^{r+1}$, in practice, due to cancellations that can occur with $\partial^\tau_t \bb \psi + \Sigma^\tau \bb g^\tau$ and $[\tilde{\Lambda}^\tau]^{-1}$, it is possible for the final form of the discretization $\bb f^\tau$ to be defined in the original domain; see examples in Section \ref{sec:CM_examples}.
\end{remark}

\subsection{Construction of first order conservative schemes}
\label{firstOrdCM}\text{ }\\
The result of Theorem \ref{mainTheorem} shows that conservative schemes of arbitrary order can be constructed provided the discrete versions of $\Lambda, \bb f, \dot{\bb x}, D_t\bb \psi, \partial_t \bb \psi$ satisfy \eqref{discCond1} and \eqref{discCond2} and $D_t\bb \psi$ is constant-compatible to $\bb \psi^\tau$. In this section, we derive first order conservative schemes using conditions \eqref{discCond1} and \eqref{discCond2} and the divided difference calculus developed in Appendix \ref{sec:divDiff}.

Using the divided differences defined in Appendix \ref{sec:divDiff}, we define the following discrete quantities:
\begin{align}
\bb \psi^\tau(t^k,\bb x^k) &:=  \bb \psi(t^k, \bb x^k), \label{1stOrdDisc1} \\
D_t^\tau \bb x(t^k,\bb x^{k+1}, \bb x^k)  &:= \frac{\Delta \bb x}{\Delta t}(\bb X^{\bb k}) =  \frac{\bb x^{k+1}-\bb x^k}{t^{k+1}-t^k}  \label{1stOrdDisc2}\\
D_t^\tau \bb \psi(t^k,\bb x^{k+1}, \bb x^k) &:= \frac{\Delta \bb \psi}{\Delta t}(\bb X^{\bb k}) = \frac{\bb \psi(t^{k+1}, \bb x^{k+1})-\bb \psi(t^k, \bb x^k)}{t^{k+1}-t^k} \label{1stOrdDisc3}
\end{align}
Thus, it immediately follows that $D_t^\tau \bb \psi=\bb 0$ implies $\bb \psi^\tau_{k+1}=\bb \psi^\tau_k$, i.e. $D_t^\tau \bb \psi$ is constant-compatible with $\bb \psi^\tau$. It remains to define $\Lambda^\tau$ and $\bb f^\tau$ such that conditions \eqref{discCond1} and \eqref{discCond2} are satisfied.

For any $\sigma\in S_{n+1}$ permutation of $\{0,\dots, n\}$, with the sequence of multi-indices $\bb{v}_{i+1}=\bb{v}_{i}+\bb e_{\sigma(i)}\in \mathbb{N}^{n+1}$ and $\bb v_0=\bb 0$, define  
\begin{equation}
\partial_t^\tau \bb \psi(t^k,\bb x^{k+1}, \bb x^k) :=  \frac{\Delta}{\Delta t}\bb \psi(\bb X^{\bb k+\bb v_{\sigma^{-1}(0)}}) = \frac{\Delta_0\bb \psi}{\Delta t}(\bb X^{\bb k+\bb v_{\sigma^{-1}(0)}}) \label{1stOrdDisc4}
\end{equation}
and the discrete multiplier to be
\begin{align}
\Lambda^\tau(t^k,\bb x^{k+1}, \bb x^k) &:= \begin{pmatrix}\frac{\Delta}{\Delta x_1}\bb \psi(\bb X^{\bb k+\bb{v}_{\sigma^{-1}(1)}}) &\cdots &\frac{\Delta}{\Delta x_n}\bb \psi(\bb X^{\bb k+\bb{v}_{\sigma^{-1}(n)}})\end{pmatrix}, \label{discMultPerm}
\nonumber
\end{align}
Then recalling from Theorem \ref{discLocalSolve}, we can define $\bb f^\tau$ locally by \eqref{discFEqn}. Thus, we have the following (at least) first order conservative method for $\bb F$.

\begin{theorem} \label{firstOrdDisc}
Let $1\leq m\leq n-1$. The discrete quantities $\bb f^\tau, D^\tau_t \bb x, D^\tau_t \bb \psi, \partial^\tau_t \bb \psi, \Lambda^\tau$ defined by \eqref{1stOrdDisc2}--\eqref{1stOrdDisc4}, \eqref{discFEqn} and \eqref{discMultPerm} are consistent to first order to $\bb f, \dot{\bb x}, D_t\bb \psi,$ $\partial_t \bb \psi,\Lambda$ respectively and they satisfy the conditions \eqref{discCond1} and \eqref{discCond2}. In other words, the corresponding $1$-step method $\bb F^\tau$ given by \eqref{discFormula} is consistent of at least first order to $\bb F$ and is conservative in $\bb \psi^\tau=\bb \psi$.
\end{theorem}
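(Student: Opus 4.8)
The plan is to verify, in turn, the three hypotheses of Theorem \ref{mainTheorem}: first-order consistency of each discrete quantity, the two discrete conditions \eqref{discCond1} and \eqref{discCond2}, and constant-compatibility of $D_t^\tau \bb \psi$ with $\bb \psi^\tau$. Two of these cost nothing. Constant-compatibility is immediate from \eqref{1stOrdDisc1} and \eqref{1stOrdDisc3}, since $D_t^\tau \bb \psi = \bb 0$ forces $\bb \psi(t^{k+1},\bb x^{k+1}) = \bb \psi(t^k,\bb x^k)$, i.e.\ $\bb \psi^\tau_{k+1} = \bb \psi^\tau_k$. Condition \eqref{discCond2} likewise requires no new argument: $\bb f^\tau$ is defined through \eqref{discFEqn}, and the conclusion of Theorem \ref{discLocalSolve} is exactly that this choice satisfies $\Lambda^\tau \bb f^\tau = -\partial_t^\tau \bb \psi$ locally and is consistent of order $q=1$ to $\bb f$. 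Thus the two substantive tasks are the remaining consistency estimates and the identity \eqref{discCond1}.

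For consistency, I would invoke the elementary approximation properties of the divided differences from Appendix \ref{sec:divDiff}. The forward quotient \eqref{1stOrdDisc2} gives $D_t^\tau \bb x = \dot{\bb x} + \mathcal{O}(\tau)$ and the full quotient \eqref{1stOrdDisc3} gives $D_t^\tau \bb \psi = D_t \bb \psi + \mathcal{O}(\tau)$ by Taylor expansion along any $C^2$ path. The partial divided differences in \eqref{1stOrdDisc4} and \eqref{discMultPerm} approximate $\partial_t \bb \psi$ and the columns $\partial_{x_i}\bb \psi$ of $\Lambda = \partial_{\bb x}\bb \psi$ to first order; since every node of the stencil $\bb X^{\bb k + \bb v}$ lies within $\mathcal{O}(\tau)$ of $(t^k,\bb x^k)$, the multi-index shift $\bb v$ contributes only a further $\mathcal{O}(\tau)$ and does not degrade the order.

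The crux is \eqref{discCond1}, which I would obtain as a telescoping sum driven by $\sigma$. The multi-indices $\bb v_0 = \bb 0, \bb v_1, \dots, \bb v_{n+1}$ trace a path from $(t^k,\bb x^k)$ to $(t^{k+1},\bb x^{k+1})$ that changes exactly one coordinate per step, incrementing coordinate $\sigma(j)$ at step $j$ (coordinate $0$ being $t$ and $1,\dots,n$ being the $x_i$). Writing
\begin{align*}
\bb \psi(t^{k+1},\bb x^{k+1}) - \bb \psi(t^k,\bb x^k) = \sum_{j=0}^{n} \left( \bb \psi(\bb X^{\bb k + \bb v_{j+1}}) - \bb \psi(\bb X^{\bb k + \bb v_j}) \right),
\end{align*}
each summand alters only coordinate $\sigma(j)$, so by the one-dimensional divided difference it equals the corresponding partial divided difference times the coordinate increment. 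The step with $\sigma(j)=0$ contributes $\partial_t^\tau \bb \psi\,\Delta t$ exactly as in \eqref{1stOrdDisc4}, while the step with $\sigma(j)=i\geq 1$ contributes the $i$-th column of $\Lambda^\tau$ in \eqref{discMultPerm} times $\Delta x_i$, the base point $\bb v_{\sigma^{-1}(i)}$ recording precisely the accumulated increment at the moment coordinate $i$ is changed. Collecting terms yields $\Delta \bb \psi = \partial_t^\tau \bb \psi\,\Delta t + \Lambda^\tau \Delta \bb x$, and division by $\Delta t$ gives \eqref{discCond1}.

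The main obstacle I anticipate is purely bookkeeping: matching the shifts $\bb v_{\sigma^{-1}(i)}$ in \eqref{1stOrdDisc4} and \eqref{discMultPerm} to the intermediate nodes of the telescoping path, so that each single-coordinate difference is identified with exactly the divided difference the definition prescribes. Once the divided difference calculus of the Appendix justifies this one-coordinate-at-a-time evaluation, conditions \eqref{discCond1} and \eqref{discCond2} together with constant-compatibility let Theorem \ref{mainTheorem} deliver the asserted first-order consistency of $\bb F^\tau$ and conservation in $\bb \psi^\tau = \bb \psi$.
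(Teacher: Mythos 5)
Your proposal is correct and follows essentially the same route as the paper: condition \eqref{discCond2} and the consistency of $\bb f^\tau$ are delegated to Theorem \ref{discLocalSolve}, and condition \eqref{discCond1} is obtained from the telescoping of $\bb\psi$ along the permutation path $\bb v_0,\dots,\bb v_{n+1}$, which is precisely the content of the divided-difference identity \eqref{discDiffPerm} that the paper cites from the appendix. The only difference is cosmetic: you re-derive that telescoping identity inline and spell out the first-order consistency of the individual divided differences, which the paper leaves implicit in the appendix calculus.
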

\begin{proof}
Noting that condition \eqref{discCond2} of Theorem \ref{mainTheorem} can always be (locally) satisfied by $\bb f^\tau$ defined by \eqref{discFEqn} from Theorem \ref{discLocalSolve}, it remains to show the above choices of discretizations satisfy condition \eqref{discCond1}.

Let $(t^k,\bb x^k), (t^{k+1},\bb x^{k+1}) \in I\times U$. In the case when the discrete multiplier is given by \eqref{discMultPerm}, we have by \eqref{1stOrdDisc2}, \eqref{1stOrdDisc4} and the relation \eqref{discDiffPerm} on divided differences,
\[
\Lambda^\tau D_t^\tau\bb x = \sum_{i=1}^n \frac{\Delta}{\Delta x_i} \bb \psi(\bb X^{\bb k+\bb{v}_{\sigma^{-1}(i)}}) \frac{\Delta x_i}{\Delta t} &= \frac{\Delta \bb \psi}{\Delta t}(\bb X^{\bb k}) - \frac{\Delta}{\Delta t} \bb \psi(\bb X^{\bb k+\bb v_{\sigma^{-1}(0)}}) \\
&= D_t^\tau \bb \psi - \partial_t^\tau \bb \psi,
\]
which implies condition \eqref{discCond1}. 
\end{proof}
\begin{remark}
One can also devise a symmetrized version of Theorem \ref{firstOrdDisc} using symmetrized discrete multiplier with symmetrized divided differences of \eqref{discDiffSym}.
\end{remark}

In addition to using \eqref{discFEqn}, there is another general approach to satisfy \eqref{discCond2}. The main idea is to look for ``undetermined consistent terms" of $\bb f^\tau$ to $\bb f$ and use \eqref{discCond2} as a constraint to compute these. This idea is illustrated in the Examples \ref{sec:2LV}, \ref{sec:3bodyDisc} and \ref{sec:dhoDisc} of Section \ref{sec:CM_examples}.

\section{Examples of conservative schemes for dynamical systems}\text{ }\\\label{sec:CM_examples}In this section, we give examples of conservative schemes for dynamical systems using the multiplier method proposed in Section \ref{sec:multMethod}.
Since the multiplier method can be applied directly to quasilinear first order systems, we will illustrate that conservative schemes can be derived in a straightforward manner, regardless of whether the system has a Hamiltonian or, more generally, a Poisson structure. In particular, we will use Theorem \ref{firstOrdDisc} to construct first order conservative schemes for Euler's equations for rigid body rotation, Lotka--Volterra systems, the planar restricted 3-body problem, and the damped harmonic oscillator. For simplicity, we only consider an uniform time step  size $\tau\in\mathbb{R}$, so that $t^{k+1}= t^k+\tau$ for all $k\in \mathbb{N}$. Indeed, Theorem \ref{firstOrdDisc} could also be applied with variable time step sizes, such as for adaptive time-stepping. Often, it will be convenient to denote a specific time average of $f$ as $\bar{f}:=\frac{1}{2}\left(f^{k+1}+f^k\right)$.

\subsection{Rigid body rotation in 3D}\label{sec:RBRotDisc}\text{ }\\
The dynamics of a rigid body's rotation in 3D is governed by Euler's equations
\begin{equation}
\boldsymbol F(\bb \omega,\dot{\bb \omega}):=\begin{pmatrix}
    \dot{\omega}_1 - \dfrac{I_2-I_3}{I_2I_3}\omega_2\omega_3\\
    \dot{\omega}_2 - \dfrac{I_3-I_1}{I_1I_3}\omega_1\omega_3\\
    \dot{\omega}_3 - \dfrac{I_1-I_2}{I_1I_2}\omega_1\omega_2
  \end{pmatrix}=\boldsymbol 0, \label{eulerSys}
\end{equation}
where $I_i$ is the principal moment of inertia and $\omega_i$ is the angular velocity along the $i$-th principal axis of the rigid body. In the absence of external torque, it is well-known that Euler's equations admit two conserved quantities, namely the energy $E(\bb \omega):=\dfrac{\omega_1^2}{I_1}+\dfrac{\omega_2^2}{I_2}+\dfrac{\omega_3^2}{I_3}$ and angular momentum $L(\bb \omega):=\omega_1^2+\omega_2^2+\omega_3^2$. Using \eqref{multCond1} from the correspondence theorem, the conserved quantity $\bb \psi$\footnote{Here, we exclude the degenerate case when $I_1=I_2=I_3$ so that $\bb \psi$ is linearly independent.} and the corresponding $2\times 3$ multiplier matrix $\Lambda(\bb \omega)$ are given by,
\begin{align*}
\bb \psi(\bb \omega) := \begin{pmatrix}
    E(\bb \omega)\\
    L(\bb \omega)\\
  \end{pmatrix}, \hskip 2mm  \Lambda(\bb \omega):=\begin{pmatrix}
\dfrac{\omega_1}{I_1} & \dfrac{\omega_2}{I_2} & \dfrac{\omega_3}{I_3}\\
\omega_1 & \omega_2 & \omega_3\\
  \end{pmatrix}.
\end{align*} We now follow the systematic procedure to construct conservative discretizations for \eqref{eulerSys} outlined in Section \ref{firstOrdCM}. 
Note that $\bb \psi$ is time-independent and so $\partial_t^\tau \bb \psi=\bb 0$. Moreover since its components consist of linear combinations of single variable functions of the form $\omega_i^2$, $\Lambda^\tau$ will be independent of permutations. In particular, for any permutation $\sigma\in S_3$, it follows from the linearity rule of \ref{rule:linearity}, constant rule of \ref{rule:constant} and polynomial rule of \ref{rule:polynomial},
\[
\frac{\Delta}{\Delta \omega_i}\bb \psi(\bb \omega^{k+v_{\sigma^{-1}(i)}}) = \frac{\Delta}{\Delta \omega_i}\begin{pmatrix} 
E(\bb \omega^{k+v_{\sigma^{-1}(i)}}) \\
L(\bb \omega^{k+v_{\sigma^{-1}(i)}})
\end{pmatrix} =
\frac{1}{2}\begin{pmatrix} 
\frac{1}{I_i}\frac{\Delta }{\Delta \omega_i} \omega_i^2\\
\frac{\Delta}{\Delta \omega_i} \omega_i^2
\end{pmatrix} =
\begin{pmatrix} 
\dfrac{\overline{\omega}_i}{I_i}\\
\overline{\omega}_i
\end{pmatrix}.
\]
Thus, the discrete multiplier given by \eqref{discMultPerm} is
\[
\Lambda^\tau(\bb \omega^{k+1},\bb \omega^k) = 
\begin{pmatrix}
\dfrac{\overline{\omega}_1}{I_1} & \dfrac{\overline{\omega}_2}{I_2} & \dfrac{\overline{\omega}_3}{I_3}\\
\overline{\omega}_1 & \overline{\omega}_2 & \overline{\omega}_3\\
\end{pmatrix}.
\]
Since $\bb \psi$ is linearly independent on $U=\{\bb \omega\in\mathbb{R}^3 : \omega_i \neq 0 \text{ for }i=1,2,3\}$, by Theorem \ref{discLocalSolve} for sufficiently small $\tau$, the leftmost $2\times 2$ minor of $\Lambda^\tau$ is invertible on $U$ and so
\[
[\tilde{\Lambda}^\tau(\bb \omega^{k+1},\bb \omega^k)]^{-1} &= \frac{I_1 I_2}{\overline{\omega}_1\overline{\omega}_2(I_2-I_1)}
\begin{pmatrix}
\overline{\omega}_2 & -\dfrac{\overline{\omega}_2}{I_2}\\
-\overline{\omega}_1 & \dfrac{\overline{\omega}_1}{I_1}
\end{pmatrix}, \hskip 2mm \Sigma^\tau(\bb \omega^{k+1},\bb \omega^k) = \begin{pmatrix}
\dfrac{\overline{\omega}_3}{I_3} \\
\overline{\omega}_3
 \end{pmatrix}.
\]
Thus, \eqref{discFEqn} implies for any $g^\tau(\bb \omega^{k+1},\bb \omega^k)$ consistent of first order to $g(\bb \omega) = \frac{I_1-I_2}{I_1I_2}\omega_1\omega_2$,
\[
\tilde{\bb f}^\tau(\omega^{k+1},\omega^k) &= -[\tilde{\Lambda}^\tau]^{-1}\Sigma^\tau g^\tau = \begin{pmatrix}
\frac{(I_2-I_3)I_1}{(I_1-I_2)I_3}\dfrac{\overline{\omega}_3}{\overline{\omega}_1} \\
\frac{(I_3-I_1)I_2}{(I_1-I_2)I_3}\dfrac{\overline{\omega}_3}{\overline{\omega}_2}
 \end{pmatrix} g^\tau(\bb \omega^{k+1},\bb \omega^k).
\]
To simplify $\tilde{\bb f}^\tau$, we make a specific choice for $g^\tau$. Since $g^\tau$ is consistent to $g$, the time averages of $\omega_i$ appearing in components of $\tilde{\bb f}^\tau$ suggest the form,
\[
\bb g^\tau(\bb \omega^{k+1},\bb \omega^k) = \dfrac{I_1-I_2}{I_1I_2}\overline{\omega}_1\overline{\omega}_2, \hskip 2mm \text{ leading to } \hskip 2mm 
\tilde{\bb f}^\tau(\omega^{k+1},\omega^k) = \begin{pmatrix}
\frac{(I_2-I_3)}{I_2I_3}\overline{\omega}_2\overline{\omega}_3 \\
\frac{(I_3-I_1)}{I_1I_3}\overline{\omega}_1\overline{\omega}_3
 \end{pmatrix}.
\]
Thus by Theorem \ref{firstOrdDisc}, equation \eqref{discFormula} gives rise to the conservative discretization,
\[
\boldsymbol F^\tau(\bb \omega^{k+1},\bb \omega^k):=\begin{pmatrix}
\dfrac{\omega_1^{k+1}-\omega_1^k}{\tau}-\dfrac{I_2-I_3}{I_2I_3}\overline{\omega}_2 \overline{\omega}_3\\
\dfrac{\omega_2^{k+1}-\omega_2^k}{\tau}-\dfrac{I_3-I_1}{I_1I_3}\overline{\omega}_1 \overline{\omega}_3\\
\dfrac{\omega_3^{k+1}-\omega_3^k}{\tau}-\dfrac{I_1-I_2}{I_1I_2}\overline{\omega}_1 \overline{\omega}_2\end{pmatrix}=\boldsymbol 0,
\] which conserves $\bb \psi$. Indeed, this is just the midpoint method applied to Euler's equations, which is in fact consistent to second order due to its symmetry\footnote{See Theorem II.3.2 of \cite{hair06Ay}}
. Also, this is not surprising since it is well-known that the midpoint method preserves quadratic invariants \cite{coo87}. Moreover, the final form of the discretizations holds globally for all $\bb \omega\in \mathbb{R}^3$. 

\subsection{Lotka--Volterra systems}\label{sec:LVSysDisc}\text{ }\\
The Lotka--Volterra system is used to model population dynamics of different species \cite{schi03a}. 
We consider the classical 2-species and a degenerate 3-species system.

\subsubsection{Classical 2-species system}\label{sec:2LV}\text{ }\\
The classical 2-species Lotka--Volterra system is given by
\begin{equation}
\boldsymbol F(\bb x):= \begin{pmatrix}
\dot{x} - x(\alpha - \beta y) \\
\dot{y} - y(\delta x - \gamma)\end{pmatrix}= {\bb 0}, \label{LVsys2D}
\end{equation} for some positive constants $\alpha, \beta, \gamma, \delta$ and positive population $x,y$ of two species. It is well-known that \eqref{LVsys2D} has the conserved quantity
\[
V(x,y) := \gamma \log x-\delta x+\alpha \log y- \beta y.
\] 
Thus, by \eqref{multCond1}, the corresponding $1\times 2$ multiplier matrix is
\[
\Lambda(x,y) := \begin{pmatrix}
\dfrac{\gamma}{x}-\delta & \dfrac{\alpha}{y}-\beta
\end{pmatrix}.
\]
Similar to the rigid body example, $V$ is a linear combination of single variable functions and so $\Lambda^\tau$ will be independent of permutations. Indeed, for any permutation $\sigma\in S_2$ of \eqref{discMultPerm}, it follows from the linearity rule of \ref{rule:linearity}, constant rule of \ref{rule:constant}, polynomial rule of \ref{rule:polynomial} and logarithm rule of \ref{rule:logarithm} that 
\[
\Lambda^\tau(\bb x^{k+1},\bb x^k) &:= \begin{pmatrix} \frac{\Delta}{\Delta x}V(\bb x^{k+v_{\sigma^{-1}(1)}}) &  \frac{\Delta}{\Delta y} V(\bb x^{k+v_{\sigma^{-1}(2)}})\end{pmatrix}\\
& = 
\begin{pmatrix} \frac{\Delta}{\Delta x}(\gamma \log x-\delta x) &  \frac{\Delta}{\Delta y}(\alpha \log y- \beta y)\end{pmatrix} \\
&= \begin{pmatrix} \gamma \dfrac{\log x_i^{k+1}-\log x_i^k}{x_i^{k+1}-x_i^k}-\delta &  \alpha \dfrac{\log y_i^{k+1}-\log y_i^k}{y_i^{k+1}-y_i^k}- \beta\end{pmatrix}.
\]
While we could proceed as before and use \eqref{discFEqn} to compute $\bb f^\tau$, here we illustrate a different approach using \eqref{discCond2}.
Since $V$ is independent of $t$, \eqref{discCond2} reduces to finding $\bb f^\tau$ belonging to the kernel of $\Lambda^\tau$ such that $\bb f^\tau$ is consistent to $\bb f$.
Since $\Lambda^\tau$ is only a $1\times 2$ matrix, $\bb f^\tau$ must be of the form
\[
\bb f^\tau(\bb x^{k+1},\bb x^k) = C^\tau(\bb x^{k+1},\bb x^k) \begin{pmatrix} \alpha \dfrac{\log y_i^{k+1}-\log y_i^k}{y_i^{k+1}-y_i^k}- \beta \\ \delta-\gamma \dfrac{\log x_i^{k+1}-\log x_i^k}{x_i^{k+1}-x_i^k}\end{pmatrix},
\] for some scalar $C^\tau$ to be determined. As $\bb f^\tau$ needs to be consistent to $\bb f$, taking the limit of $\bb f^\tau$ as $\tau\rightarrow 0$ implies $\displaystyle \lim_{\tau\rightarrow 0} C^\tau = x y$. In other words, $C^\tau := (xy)^\tau$ can be any consistent discretization of $xy$. For brevity, choosing $C^\tau := x^k y^k$ gives the conservative discretization of \eqref{LVsys2D},
\begin{equation*}
\bb F^\tau(\bb x^{k+1}, \bb x^k) := \begin{pmatrix}
\dfrac{x^{k+1}-x^{k}}{\tau} - x^k\left(\alpha y^k\left(\dfrac{\log y^{k+1} - \log y^k}{y^{k+1}-y^{k}}\right) - \beta y^k\right)\\
\dfrac{y^{k+1}-y^{k}}{\tau} - y^k\left(\delta x^k -\gamma x^k\left(\dfrac{\log x^{k+1} - \log x^k}{x^{k+1}-x^k}\right)\right)
\end{pmatrix} =\bb 0,
\label{LV2dDisc}
\end{equation*} which conserves $V(x,y)$.

\subsubsection{A degenerate 3-species system} \text{ }\\
Consider the Lotka--Volterra system with positive populations $x_1, x_2, x_3$ of 3-species,
\begin{align}
\bb F(\bb x) := \begin{pmatrix}\dot x_1-x_1(x_2-x_3) \\ \dot x_2-x_2(x_3-x_1) \\ \dot x_3-x_3(x_1-x_2) \end{pmatrix}=\boldsymbol 0. \label{LVsys3D}
\end{align}
From \cite{schi03a}, \eqref{LVsys3D} satisfies a degeneracy condition and thus has two conserved quantities
\[
\bb\psi(\bb x):=\begin{pmatrix}  x_1+x_2+x_3 \\ x_1x_2x_3 \end{pmatrix},
\] with the corresponding multiplier given by \eqref{multCond1},
\[
\Lambda(\bb x) := \begin{pmatrix} 1 & 1 & 1 \\ x_2x_3 & x_1x_3 & x_1x_2 \end{pmatrix}.
\]
Next we employ \eqref{discMultPerm} to discretize $\Lambda$. However, unlike the rigid body example and the 2-species Lotka--Volterra example, the discrete multiplier will in general depend on the permutation $\sigma\in S_3$ because the term $x_1x_2x_3$ of $\bb \psi$ depends explicitly on all three variables. Since there are $3!=6$ choices of $\sigma$, let us first look at the identity permutation $\sigma_1 = (1,2,3)$. For $\sigma_1$, $\bb v_1=(0,0,0)$, $\bb v_2=\bb E_1=(1,0,0)$ and $\bb v_3=\bb E_2=(1,1,0)$. So by \eqref{discMultPerm} and by the linearity rule of \ref{rule:linearity}, constant rule of \ref{rule:constant}, separable product rule of \ref{rule:sepProd} and polynomial rule of \ref{rule:polynomial},
\[
\Lambda^\tau(\bb x^{k+1}, \bb x^k)&=\begin{pmatrix}
\frac{\Delta}{\Delta x_1}\bb \psi(\bb x^{k+v_{\sigma_1^{-1}(1)}}) & \frac{\Delta}{\Delta x_2}\bb \psi(\bb x^{k+v_{\sigma_1^{-1}(2)}}) &\frac{\Delta}{\Delta x_2} \bb \psi(\bb x^{k+v_{\sigma_1^{-1}(3)}})
\end{pmatrix}\\
&=\begin{pmatrix}
\frac{\Delta}{\Delta x_1}\bb \psi(\bb x^{k+v_{1}}) & \frac{\Delta}{\Delta x_2}\bb \psi(\bb x^{k+v_{2}}) &\frac{\Delta}{\Delta x_2} \bb \psi(\bb x^{k+v_{3}})
\end{pmatrix}\\
&=\begin{pmatrix} 
\frac{\Delta}{\Delta x_1}\left(x_1+x_2+x_3\right) & \frac{\Delta}{\Delta x_2}\left(x_1+x_2+x_3\right) & \frac{\Delta}{\Delta x_3}\left(x_1+x_2+x_3\right)\\ 
x_2^k x_3^k \frac{\Delta}{\Delta x_1}x_1 & x_1^{k+1}x_3^k\frac{\Delta}{\Delta x_2}x_2 & x_1^{k+1}x_2^{k+1}\frac{\Delta}{\Delta x_3}x_3
\end{pmatrix}\\
&=\begin{pmatrix}
1 & 1 & 1\\
x_2^kx_3^k & x_1^{k+1}x_3^k & x_1^{k+1}x_2^{k+1}\\
\end{pmatrix}.
\]
The leftmost $2\times 2$ minor of $\Lambda^\tau$ is invertible for sufficiently small $\tau$ on \\$U=\{\bb x\in\mathbb{R}^3:x_1\ne x_2 \text{ and } x_3\ne 0\}$ and therefore
\[
[\tilde{\Lambda}^\tau(\bb x^{k+1},\bb x^k)]^{-1} &= \frac{1}{x_3^k(x_1^{k+1}-x_2^k)}
\begin{pmatrix}
x_1^{k+1}x_3^k & -1 \\ -x_2^kx_3^k & 1
\end{pmatrix},\quad \Sigma^\tau(\bb x^{k+1},\bb x^k)=\begin{pmatrix}
 1\\ x_1^{k+1}x_2^{k+1}
\end{pmatrix}.
\]
Again, thanks to~\eqref{discFEqn} for any $g^\tau(\bb x^{k+1},\bb x^k)$ that is consistent of first order to \\$g(\bb x) = x_3(x_1-x_2)$, we have
\[
\tilde{\bb f}^\tau(\bb x^{k+1},\bb x^k) &= -[\tilde{\Lambda}^\tau]^{-1}\Sigma^\tau g^\tau = \dfrac{1}{x_3^k(x_1^{k+1}-x_2^k)}\begin{pmatrix}
x_1^{k+1}(x_2^{k+1}-x_3^k)\\ x_2^kx_3^k-x_1^{k+1}x_2^{k+1}
 \end{pmatrix} g^\tau(\bb x^{k+1},\bb x^k).
\]
To simplify $\tilde{\bb f}^\tau$, it is natural to choose $\bb g^\tau$ as
\[
\bb g^\tau(\bb x^{k+1},\bb x^k) = x_3^k(x_1^{k+1}-x_2^k), \hskip 2mm \text{ leading to } \hskip 2mm 
\tilde{\bb f}^\tau(\bb x^{k+1},\bb x^k) = \begin{pmatrix}
x_1^{k+1}(x_2^{k+1}-x_3^k)\\ x_2^kx_3^k-x_1^{k+1}x_2^{k+1}
 \end{pmatrix}.
\]
Thus, the resulting conservative discretization of \eqref{LVsys3D} is
\begin{equation*}
\boldsymbol F_1^\tau(\bb x^{k+1},\bb x^k) := 
\begin{pmatrix}
\dfrac{x^{k+1}_1-x^k_1}{\tau} - x_1^{k+1}(x_2^{k+1}-x_3^k)\\
\dfrac{x^{k+1}_2-x^k_2}{\tau} - (x_2^kx_3^k-x_1^{k+1}x_2^{k+1})\\
\dfrac{x^{k+1}_3-x^k_3}{\tau} - x_3^k(x_1^{k+1}-x_2^k)
\end{pmatrix} = \boldsymbol 0, \text{ for }\sigma_1 = (1,2,3).
\end{equation*}
Similarly, one can carry out the systematic procedure to derive conservative schemes for the other five permutations. For completeness, we include them here.
\[\boldsymbol F_2^\tau(\bb x^{k+1},\bb x^k) &:= 
\begin{pmatrix}
\dfrac{x^{k+1}_1-x^k_1}{\tau} - x_1^{k+1}(x_2^{k}-x_3^{k+1})\\
\dfrac{x^{k+1}_2-x^k_2}{\tau} - x_2^k(x_3^k-x_1^{k+1})\\
\dfrac{x^{k+1}_3-x^k_3}{\tau} - (x_3^{k+1}x_1^{k+1}-x_2^kx_3^k)
\end{pmatrix} = \boldsymbol 0, \text{ for }\sigma_2 = (1,3,2).\\
\boldsymbol F_3^\tau(\bb x^{k+1},\bb x^k) &:= 
\begin{pmatrix}
\dfrac{x^{k+1}_1-x^k_1}{\tau} - (x_1^{k+1}x_2^{k+1}-x_1^kx_3^k)\\
\dfrac{x^{k+1}_2-x^k_2}{\tau} - x_2^{k+1}(x_3^k-x_1^{k+1})\\
\dfrac{x^{k+1}_3-x^k_3}{\tau} - x_3^k(x_1^{k}-x_2^{k+1})
\end{pmatrix} = \boldsymbol 0, \text{ for }\sigma_3 = (2,1,3).\\
\boldsymbol F_4^\tau(\bb x^{k+1},\bb x^k) &:= 
\begin{pmatrix}
\dfrac{x^{k+1}_1-x^k_1}{\tau} - x_1^{k}(x_2^{k+1}-x_3^k)\\
\dfrac{x^{k+1}_2-x^k_2}{\tau} - x_2^{k+1}(x_3^{k+1}-x_1^k)\\
\dfrac{x^{k+1}_3-x^k_3}{\tau} - (x_1^k x_3^k-x_2^{k+1}x_3^{k+1})
\end{pmatrix} = \boldsymbol 0, \text{ for }\sigma_4 = (2,3,1).\\
\boldsymbol F_5^\tau(\bb x^{k+1},\bb x^k) &:= 
\begin{pmatrix}
\dfrac{x^{k+1}_1-x^k_1}{\tau} - (x_1^kx_2^k-x_1^{k+1}x_3^{k+1})\\
\dfrac{x^{k+1}_2-x^k_2}{\tau} - x_2^k(x_3^{k+1}-x_1^k)\\
\dfrac{x^{k+1}_3-x^k_3}{\tau} - x_3^{k+1}(x_1^{k+1}-x_2^k)
\end{pmatrix} = \boldsymbol 0, \text{ for }\sigma_5 = (3,1,2).\\
\boldsymbol F_6^\tau(\bb x^{k+1},\bb x^k) &:= 
\begin{pmatrix}
\dfrac{x^{k+1}_1-x^k_1}{\tau} - x_1^{k}(x_2^{k}-x_3^{k+1})\\
\dfrac{x^{k+1}_2-x^k_2}{\tau} - (x_2^{k+1}x_3^{k+1}-x_1^{k}x_2^{k})\\
\dfrac{x^{k+1}_3-x^k_3}{\tau} - x_3^{k+1}(x_1^{k}-x_2^{k+1})
\end{pmatrix} = \boldsymbol 0, \text{ for }\sigma_6 = (3,2,1). 
\]
\subsection{The planar restricted three-body problem} \vskip-2mm\label{sec:3bodyDisc}\text{ }\\
Next we consider the planar restricted three-body problem described in \cite{Sze67}. This problem models the gravitational motion of three bodies in a plane with a negligible mass for one of the bodies, for example the Earth--Moon--Satellite system. The equations of motion can be expressed as a first order system:
\begin{equation}
\boldsymbol F(\boldsymbol x) 
=
\begin{pmatrix}\dot{x_1} - y_1 \\
\dot{x_2} -y_2\\
\dot{y_1} -\left(x_1+2y_2-\dfrac{\alpha(x_1-\beta)}{((x_1-\beta)^2+x_2^2)^{\frac{3}{2}}}-\dfrac{\beta(x_1+\alpha)}{((x_1+\alpha)^2+x_2^2)^{\frac{3}{2}}}\right)\\
\dot{y_2} -\left(x_2-2y_1-\dfrac{\alpha x_2}{((x_1-\beta)^2+x_2^2)^{\frac{3}{2}}}-\dfrac{\beta x_2}{((x_1+\alpha)^2+x_2^2)^{\frac{3}{2}}}\right)\end{pmatrix}=\boldsymbol{0}, \label{3bodySys}
\end{equation}
where $(x_1,x_2)$ is the position of the satellite relative to the center of mass of the Earth and Moon, $\alpha, \beta$ are relative masses of two bodies such that $\alpha+\beta=1$. It is well-known that  \eqref{3bodySys} has a conserved quantity called Jacobi integral $J$ given by
\[
 J(\boldsymbol x)=\dfrac{x_1^2 + x_2^2- y_1^2-y_2^2}{2} + \dfrac{\alpha}{((x_1-\beta)^2+x_2^2)^{\frac{1}{2}}}+\dfrac{\beta}{((x_1+\alpha)^2+x_2^2)^{\frac{1}{2}}}.
\]Moreover, there exists a canonical transformation which turns \eqref{3bodySys} into a Hamiltonian system with $J$ being the effective Hamiltonian in the new coordinates \cite{Sze67}. We shall work directly with \eqref{3bodySys} to illustrate the application of the multiplier method without the need to make or know the existence of such transformation. Using \eqref{multCond1}, the transpose of the associated $1\times 4$ multiplier matrix $\Lambda$ is given by
\[
\Lambda(\bb x)^T = \begin{pmatrix} x_1-\dfrac{\alpha(x_1-\beta)}{((x_1-\beta)^2+x_2^2)^{\frac{3}{2}}}-\dfrac{\beta(x_1+\alpha)}{((x_1+\alpha)^2+x_2^2)^{\frac{3}{2}}} \\
 x_2-\dfrac{\alpha x_2}{((x_1-\beta)^2+x_2^2)^{\frac{3}{2}}}-\dfrac{\beta x_2}{((x_1+\alpha)^2+x_2^2)^{\frac{3}{2}}} \\
 -y_1 \\
 -y_2
\end{pmatrix}.
\]
Note that $J$ is a linear combination of single variable functions of $x_i^2, y_i^2$ and two variable functions of $x_1, x_2$. Specifically, for any permutation $\sigma \in S_4$, $\bb x^{k+v_{\sigma^{-1}(1)}} = (x_1^k, x_2^s, *, *)$ where $s=k,k+1$ and $\bb x^{k+v_{\sigma^{-1}(2)}} = (x_1^r, x_2^k, *, *)$ where $r=k,k+1$. Thus, the discrete multiplier matrix simplifies to
\[
&\Lambda^\tau(\bb x^{k+1}, \bb x^k) =\\
& 
\begin{pmatrix} 
\frac{\Delta}{\Delta x_1} J(\bb x^{k+v_{\sigma^{-1}(1)}}) \\
\frac{\Delta}{\Delta x_2} J(\bb x^{k+v_{\sigma^{-1}(2)}}) \\
\frac{\Delta}{\Delta y_1} J(\bb x^{k+v_{\sigma^{-1}(3)}}) \\
\frac{\Delta}{\Delta y_2} J(\bb x^{k+v_{\sigma^{-1}(4)}})
 \end{pmatrix} =
 \begin{pmatrix} 
\overline{x}_1+\frac{\Delta}{\Delta x_1} \left(\dfrac{\alpha}{((x_1-\beta)^2+(x_2^s)^2)^{\frac{1}{2}}}+\frac{\beta}{((x_1+\alpha)^2+(x_2^s)^2)^{\frac{1}{2}}} \right)\\
\overline{x}_2+\frac{\Delta}{\Delta x_2} \left(\dfrac{\alpha}{((x_1^r-\beta)^2+x_2^2)^{\frac{1}{2}}}+\frac{\beta}{((x_1^r+\alpha)^2+x_2^2)^{\frac{1}{2}}}\right)\\
-\overline{y}_1 \\
-\overline{y}_2
 \end{pmatrix}
\]
By the reciprocal rule of \ref{rule:reciprocal}, chain rule of \ref{rule:chainScalar}, and rational power rule of \ref{rule:rationalPower}, 
\[
\frac{\Delta}{\Delta z}\left(\frac{1}{\sqrt{z}}\right) = -\frac{1}{\sqrt{z^k}\sqrt{z^{k+1}}(\sqrt{z^{k}}+\sqrt{z^{k+1}})}
\]
Combining with the chain rule of \ref{rule:chainScalar} and polynomial rule of \ref{rule:polynomial} gives
\begin{align*}
\frac{\Delta}{\Delta x_1}\left(\frac{1}{((x_1+\alpha)^2+(x_2^s)^2)^{\frac{1}{2}}}\right) &= -\frac{2(\overline{x}_1+\alpha)}{A^{k,s}A^{k+1,s}(A^{k,s}+A^{k+1,s})},\\
\frac{\Delta}{\Delta x_2}\left(\frac{1}{((x_1^r+\alpha)^2+x_2^2)^{\frac{1}{2}}}\right) &= -\frac{2\overline{x}_2}{A^{r,k}A^{r,k+1}(A^{r,k}+A^{r,k+1})}, \\
\frac{\Delta}{\Delta x_1}\left(\frac{1}{((x_1-\beta)^2+(x_2^s)^2)^{\frac{1}{2}}}\right) &= -\frac{2(\overline{x}_1-\beta)}{B^{k,s}B^{k+1,s}(B^{k,s}+B^{k+1,s})}, \\
\frac{\Delta}{\Delta x_2}\left(\frac{1}{((x_1^r-\beta)^2+x_2^2)^{\frac{1}{2}}}\right) &= -\frac{2\overline{x}_2}{B^{r,k}B^{r,k+1}(B^{r,k}+B^{r,k+1})},
\end{align*} where $A^{r,s} := \sqrt{(x_1^{r}+\alpha)^2+(x_2^{s})^2)}, B^{r,s} := \sqrt{(x_1^{r}-\beta)^2+(x_2^{s})^2}$. 
So, $\Lambda^\tau$ simplifies to
\[
\Lambda^\tau(\bb x^{k+1},\bb x^k)^T &= 
\begin{pmatrix} 
\overline{x}_1-\dfrac{2\alpha(\overline{x}_1-\beta)}{B^{k,s}B^{k+1,s}(B^{k,s}+B^{k+1,s})}-\dfrac{2\beta(\overline{x}_1+\alpha)}{A^{k,s}A^{k+1,s}(A^{k,s}+A^{k+1,s})} \\
\overline{x}_2-\dfrac{2\alpha\overline{x}_2}{B^{r,k}B^{r,k+1}(B^{r,k}+B^{r,k+1})}-\dfrac{2\beta\overline{x}_2}{A^{r,k}A^{r,k+1}(A^{r,k}+A^{r,k+1})}\\
-\overline{y}_1 \\
-\overline{y}_2
 \end{pmatrix}.
 \]
To find $\bb f^\tau$, we take the alternate approach of using \eqref{discCond2} as in the 2-species Lotka--Volterra example. Specifically, since $J$ is independent of $t$, we wish to find an approximation $\bb f^\tau$ belonging in the kernel of $\Lambda^\tau$ such that $\bb f^\tau$ is consistent to $\bb f$. Due to the similarity between the terms of $\Lambda^\tau$ and the form of $\bb f$, let us propose
\begin{align}
&\bb f^\tau(\bb x^{k+1},\bb x^k) := \label{R3B_RHS} \\
&C^\tau \begin{pmatrix} 
\overline{y}_1 \\
\overline{y}_2 \\
\overline{x}_1+2\overline{y}_2-\dfrac{2\alpha(\overline{x}_1-\beta)}{B^{k,s}B^{k+1,s}(B^{k,s}+B^{k+1,s})}-\dfrac{2\beta(\overline{x}_1+\alpha)}{A^{k,s}A^{k+1,s}(A^{k,s}+A^{k+1,s})}\\
\overline{x}_2-2\overline{y}_1-\dfrac{2\alpha\overline{x}_2}{B^{r,k}B^{r,k+1}(B^{r,k}+B^{r,k+1})}-\dfrac{2\beta\overline{x}_2}{A^{r,k}A^{r,k+1}(A^{r,k}+A^{r,k+1})}
 \end{pmatrix}, \nonumber
\end{align} for some $C^\tau$ to be determined. Indeed, one can verify that $\bb f^\tau\rightarrow 
\displaystyle (\lim_{\tau\rightarrow 0} C^\tau) \bb f$ as $\tau\rightarrow 0$ which implies $\displaystyle \lim_{\tau\rightarrow 0} C^\tau=1$. For simplicity, we pick $C^\tau=1$. Moreover, it can be checked that $\Lambda^\tau \bb f^\tau =\bb 0$. In other words, $\bb f^\tau$ satisfies  \eqref{discCond2} which implies four conservative discretizations for $J(x_1,x_2,y_1,y_2)$ given by \eqref{discFormula} with $D_t^\tau \bb x$ defined by \eqref{1stOrdDisc2} and $\bb f^\tau$ defined by \eqref{R3B_RHS} for $r,s \in \{k,k+1\}$.

\subsection{Damped Harmonic Oscillator}\label{sec:dhoDisc}\text{ }\\
Recall from Example \ref{dhoSys} of Section \ref{exCLMult}, the damped harmonic oscillator of \eqref{dhoSysEqn} has a time-dependent conserved quantity $\psi$ and $1\times 2$ multiplier $\Lambda$ given by,
\[
\psi(t,x,y) = \frac{e^{\frac{\gamma}{m}t}}{2}\left(my^2+\gamma x y + \kappa x^2\right),& \hskip 2mm 
\Lambda(t,x,y) = e^{\frac{\gamma}{m}t} \begin{pmatrix} \left(\kappa x+\dfrac{\gamma}{2}y\right)&\left(\dfrac{\gamma}{2}x+my\right)\end{pmatrix}.
\]
Similar to the 3-species Lotka--Volterra example, $\psi$ is a linear combination of functions with explicit dependence on $t,x,y$. Thus to employ \eqref{discMultPerm}, we need to choose a permutation $\sigma\in S_3$. For simplicity, we take $\sigma$ to be the identity permutation, which leads to the sequence $\bb v_0 =(0, 0, 0), \bb v_1 = \bb E_0 = (1, 0, 0)$, $\bb v_2 = \bb E_1 = (1, 1, 0)$. It follows from the linearity rule of \ref{rule:linearity}, constant rule of \ref{rule:constant}, polynomial rule of \ref{rule:polynomial} and exponential rule of \ref{rule:exponential},
\[
\Lambda^\tau &= \begin{pmatrix} \frac{\Delta}{\Delta x}\psi(\bb X^{\bb k+\bb v_{\sigma^{-1}(1)}}) & \frac{\Delta}{\Delta y}\psi(\bb X^{\bb k+\bb v_{\sigma^{-1}(2)}}) \end{pmatrix} =  \begin{pmatrix} \frac{\Delta}{\Delta x}\psi(\bb X^{\bb k+\bb v_1}) & \frac{\Delta}{\Delta y}\psi(\bb X^{\bb k+\bb v_2}) \end{pmatrix} \\
&= \begin{pmatrix} \frac{\Delta}{\Delta x}\psi(t^{k+1},x^k, y^k) & \frac{\Delta}{\Delta y}\psi(t^{k+1}, x^{k+1}, y^k) \end{pmatrix} \\
&= \frac{e^{\frac{\gamma}{m}t^{k+1}}}{2} \begin{pmatrix} \frac{\Delta}{\Delta x}(\gamma y^{k} x + \kappa x^2) & \frac{\Delta}{\Delta y}(m y^2 + \gamma x^{k+1} y) \end{pmatrix}  \\
&= e^{\frac{\gamma}{m}t^{k+1}} \begin{pmatrix} \kappa\bar{x}+\frac{\gamma}{2} y^k & \frac{\gamma}{2} x^{k+1}+m\bar{y} \end{pmatrix},\\
\partial_t^\tau \psi &= \frac{\Delta}{\Delta t}\psi(\bb X^{\bb k+\bb v_{\sigma^{-1}(0)}}) = \frac{\Delta}{\Delta t}\psi(\bb X^{\bb k+\bb v_{0}}) = \frac{\Delta}{\Delta t}\psi(\bb X^{\bb k})  \\
&= \frac{\Delta}{\Delta t}\psi(t^k, x^k, y^k) = e^{\frac{\gamma}{m}t^k} \left(\frac{e^{\frac{\gamma}{m}\tau}-1}{\frac{\gamma}{m}\tau}\right)\frac{\gamma}{2m}\left(m(y^k)^2+\gamma x^k y^k + \kappa (x^k)^2\right).
\]
To find $\bb f^\tau$ satisfying \eqref{discCond2}, let us use the alternate approach presented before in the 2-species Lotka--Volterra system and the planar restricted three-body problem. Noting the form of $\bb f$, we define
\[
\bb f^\tau(\bb X^{\bb k+1},\bb X^{\bb k}):=C^\tau(\bb X^{\bb k+1},\bb X^{\bb k})\begin{pmatrix}
 \tilde{y}^\tau \\
-\frac{1}{m} \left(\gamma y^\tau+\kappa x^\tau\right)
\end{pmatrix},
\] where the undetermined consistent terms satisfy $C^\tau \rightarrow 1$, $x^\tau\rightarrow x$ and $\tilde{y}^\tau, y^\tau\rightarrow y$, as $\tau\rightarrow 0$. For this choice of $\bb f^\tau$, treating \eqref{discCond2} as a constraint implies
\begin{align}
&C^\tau e^{\frac{\gamma}{m}t^{k+1}}\left(\kappa(\overline{x}\tilde{y}^\tau-\overline{y}x^\tau)+\gamma\left(\frac{y^k \tilde{y}^\tau}{2}-\overline{y} y^\tau\right) 
-\frac{\gamma^2}{2m}x^{k+1}y^\tau-\frac{\gamma \kappa}{2m}x^{k+1}x^\tau\right) \label{dhoCond}\\
&\hskip 2mm= - e^{\frac{\gamma}{m}t^{k}}\left(\frac{e^{\frac{\gamma}{m}\tau}-1}{\frac{\gamma}{m}\tau}\right) \left(\frac{\gamma}{2}(y^k)^2+\frac{\gamma^2}{2m}x^ky^k+\frac{\gamma \kappa}{2m}(x^k)^2\right). \nonumber
\end{align}
To simplify \eqref{dhoCond}, we choose $x^\tau:=\bar{x}$, $\tilde{y}^\tau:= \bar{y}$ and,
\[
C^\tau(\bb X^{\bb k+1},\bb X^{\bb k}) := \left(\frac{1-e^{-\frac{\gamma}{m}\tau}}{\frac{\gamma}{m}\tau}\right) \rightarrow 1 \text{ as } \tau\rightarrow 0.
\] Thus, substituting these choices for $x^\tau, \tilde{y}^\tau, C^\tau$ in \eqref{dhoCond} implies $y^\tau$ satisfies,
\[
y^\tau:=\dfrac{y^k\left(m\frac{y^k+\overline{y}}{2}+\frac{\gamma}{2}x^k\right)+\frac{\kappa}{2}\left((x^k)^2-x^{k+1}\overline{x}\right)}{m\overline{y}+\frac{\gamma}{2}x^{k+1}} \rightarrow y \text{ as } \tau\rightarrow 0.
\]
Finally, we have the conservative discretization of the damped harmonic oscillator,
\begin{align*}
&\bb F^\tau(\bb X^{\bb k+1},\bb X^{\bb k}) := \\
&\begin{pmatrix} \frac{x^{k+1}-x^k}{\tau} \\  \frac{y^{k+1}-y^k}{\tau}  \end{pmatrix}-
\left(\frac{1-e^{-\frac{\gamma}{m}\tau}}{\frac{\gamma}{m}\tau}\right) \begin{pmatrix} 
\bar{y} \\  
-\frac{1}{m}\left( \gamma \frac{y^k\left(m\frac{y^k+\overline{y}}{2}+\frac{\gamma}{2}x^k\right)+\frac{\kappa}{2}\left((x^k)^2-x^{k+1}\overline{x}\right)}{m\overline{y}+\frac{\gamma}{2}x^{k+1}}+\kappa\overline{x}\right)  \end{pmatrix}= \bb 0,
\end{align*}which conserves the time-dependent conserved quantity $\psi(t,x,y)$. Note that $\bb F^\tau$ reduces to the midpoint rule when $\gamma\rightarrow 0^+$ which is known to conserve the energy of the harmonic oscillator.

\section{Numerical results}\label{sec:numerics}\text{ }\\
In this section, we report numerical results verifying conservative properties of the discretizations derived in the examples of Section \ref{sec:CM_examples}. All discretizations derived in the examples are implicit and at least first order accurate. We compare results with the backward Euler, the trapezoidal method and the midpoint method. The last two methods are implicit and second order, with the midpoint method being also symplectic for Hamiltonian systems \cite{hair06Ay}. A fixed point iteration was used to solve non-linear systems with an absolute tolerance $TOL=10^{-15}$ for all implicit methods unless otherwise noted. All numerical results begin at $t=0$ and end at a final time $T$. We have used a uniform time step of size $\tau$ with a total number of $N$ time steps. The error in the component $\psi_i$ of a conserved quantity $\bb \psi$ is measured by: 
\[
\text{Error}[\psi_i(t,\bb x)] := \max_{k=1, \dots, N} |\psi_i(t^k,\bb x^k)-\psi_i(0,\bb x^0)|
\]

\subsection{Euler's equation for rigid body rotation}\text{ }\\
For the first example, $T=10$, $N=1000$ and $\tau=0.01$ with parameters $I_{1}=1$,
$I_{2}=2$, $I_{3}=3$, and initial conditions $\bb \omega(0) = (1, 1, 1)^T$.
\begin{table}[H]
\noindent \begin{centering}
\begin{tabular}{|c|c|c|}
\hline 
Method & $\text{Error}\left[E(\bb \omega)\right]$ & $\text{Error}\left[L(\bb \omega)\right]$\tabularnewline
\hline 
\hline 
Backward Euler  & $2.71\cdot10^{-2}$ & $6.18\cdot10^{-2}$\tabularnewline
\hline 
Multiplier - same as Midpoint  & $3.997\cdot10^{-15}$ & $3.997\cdot10^{-15}$\tabularnewline
\hline 
Trapezoidal  & $5.09\cdot10^{-6}$ & $8.33\cdot10^{-6}$\tabularnewline
\hline 
\end{tabular}
\par\end{centering}
\caption{Numerical error of $E$ and $L$ for the 3D rigid body rotation example.}\vskip -5mm
\end{table}
The multiplier method guarantees conservation of energy $E$ and angular momentum $L$ up to round-off errors. Interestingly, the multiplier method reduces to the midpoint rule
for this problem.

\subsection{2-species Lotka--Volterra system}\text{ }\\
For the second example, $T=10$, $N=1000$ and $\tau=0.01$ with parameters $\alpha=\beta=\gamma=\delta=1$, and initial conditions $\bb x(0) = (1, 1)^T$. Additionally, the tolerance for the fixed point iteration was set to $TOL=10^{-13}$ due to small divisors approaching machine precision\footnote{One can avoid this by a Taylor expansion since the numerator of the divided difference will also be small. Here, we chose not to make this regularization and compare numerical results as is.} which can arise from divided difference quotients in the multiplier discretization. 

\begin{table}[H]
\noindent \begin{centering}
\begin{tabular}{|c|c|}
\hline 
Method & $\text{Error}\left[V(\bb x)\right]$\tabularnewline
\hline 
\hline 
Backward Euler & $2.71\cdot10^{-2}$\tabularnewline
\hline 
Multiplier & $1.11\cdot10^{-14}$\tabularnewline
\hline 
Midpoint & $7.32\cdot10^{-6}$\tabularnewline
\hline 
Trapezoidal  & $1.46\cdot10^{-5}$\tabularnewline
\hline 
\end{tabular}
\par\end{centering}
\protect\caption{Numerical error of $V$ for the 2-species Lotka--Volterra example.}
\vskip -5mm
\end{table}
The multiplier method is the only method from those tested which guarantees
conservation of $V$ up to round-off errors.

\subsection{3-species Lotka--Volterra system}\text{ }\\
For the third example, $T=10$, $N=1000$ and $\tau=0.01$ with initial conditions $\bb x=(1,2,3)^T$.

\begin{table}[H]
\noindent \begin{centering}
\begin{tabular}{|c|c|c|}
\hline 
Method & $\text{Error}\left[x+y+z\right]$ & $\text{Error}\left[xyz\right]$\tabularnewline
\hline 
\hline 
Backward Euler & $1.07\cdot10^{-14}$ & $1.299\cdot10^{0}$\tabularnewline
\hline 
Multiplier - $\bb F_{1}^{\tau}$ discretization & $5.33\cdot10^{-15}$ & $1.42\cdot10^{-14}$\tabularnewline
\hline 
Midpoint & $6.22\cdot10^{-15}$ & $4.17\cdot10^{-5}$\tabularnewline
\hline 
Trapezoidal  & $7.99\cdot10^{-15}$ & $8.34\cdot10^{-5}$\tabularnewline
\hline 
\end{tabular}
\par\end{centering}

\protect\caption{Numerical error of $x+y+z$ and $xyz$ for the 3-species Lotka--Volterra example.}\vskip -5mm
\end{table}
The multiplier method is the only method of those tested which guarantees conservation of both conserved quantities up to round-off errors. Additionally, we compared the six discretizations generated by the permutations of $\sigma\in S_3$ and verified that every discretization conserves both conserved quantities up to round-off errors as expected.

\begin{table}[H]
\noindent \begin{centering}
\begin{tabular}{|c|c|c|}
\hline 
Method & $\text{Error}\left[x+y+z\right]$ & $\text{Error}\left[xyz\right]$\tabularnewline
\hline 
\hline 
Multiplier - $\bb F_{1}^{\tau}$ discretization & $5.33\cdot10^{-15}$ & $1.42\cdot10^{-14}$\tabularnewline
\hline 
Multiplier - $\bb F_{2}^{\tau}$ discretization & $7.11\cdot10^{-15}$ & $1.33\cdot10^{-14}$\tabularnewline
\hline 
Multiplier - $\bb F_{3}^{\tau}$ discretization & $3.55\cdot10^{-15}$ & $1.24\cdot10^{-14}$\tabularnewline
\hline 
Multiplier - $\bb F_{4}^{\tau}$ discretization & $7.11\cdot10^{-15}$ & $1.33\cdot10^{-14}$\tabularnewline
\hline 
Multiplier - $\bb F_{5}^{\tau}$ discretization & $5.33\cdot10^{-15}$ & $1.24\cdot10^{-14}$\tabularnewline
\hline 
Multiplier - $\bb F_{6}^{\tau}$ discretization & $5.33\cdot10^{-15}$ & $1.78\cdot10^{-14}$\tabularnewline
\hline 
\end{tabular}
\par\end{centering}

\protect\caption{Comparison of all conservative discretizations generated by permutations
of $\sigma \in S_3$.}\vskip -5mm
\end{table}

\subsection{Planar restricted three-body problem}\text{ }\\
For the fourth example, we have used the standard Arenstorf orbit parameters. Specifically, $T=17.0652165601579625588917206249$, $N=200000$, $\tau\approx8.5326\cdot10^{-5}$ with parameters $\alpha=0.012277471$, $\beta=1-\alpha$ and initial conditions
\[
\begin{pmatrix}x_{1}(0)\\
x_{2}(0)\\
y_{1}(0)\\
y_{2}(0)
\end{pmatrix}=\begin{pmatrix}0.994\\
0\\
0\\
-2.00158510637908252240537862224
\end{pmatrix}.
\]

\begin{table}[H]
\noindent \begin{centering}
\begin{tabular}{|c||c|}
\hline 
Method & $\text{Error}\left[J(\bb x)\right]$\tabularnewline
\hline 
\hline 
Backward Euler & $3.22\cdot10^{-2}$\tabularnewline
\hline 
\hline 
Multiplier & $8.10\cdot10^{-14}$\tabularnewline
\hline 
\hline 
Midpoint & $2.48\cdot10^{-4}$\tabularnewline
\hline 
\hline 
Trapezoidal & $1.82\cdot10^{-4}$\tabularnewline
\hline 
\end{tabular}
\par\end{centering}

\protect\caption{Numerical error in $J$ for the planar restricted three-body problem
example.}\vskip -5mm
\end{table}
As in the previous examples, the multiplier method is the only method of those tested that guarantees conservation of the Jacobi integral $J$ up to round-off errors.

\subsection{Damped harmonic oscillator}\text{ }\\
For the last example, $T=10$, $N=1000$ and $\tau=0.01$ with parameters $m=4$, $\gamma=0.5$, $\kappa=5$, and initial conditions $\bb x(0) = (1, 0)^T$.

\begin{table}[H]
\noindent \begin{centering}
\begin{tabular}{|c|c|}
\hline 
Method & $\text{Error}\left[\psi\left(t,x,y\right)\right]$\tabularnewline
\hline 
\hline 
Backward Euler & $2.92\cdot10^{-1}$\tabularnewline
\hline 
Multiplier & $5.77\cdot10^{-14}$\tabularnewline
\hline 
Midpoint & $9.72\cdot10^{-5}$\tabularnewline
\hline 
Trapezoidal  & $9.72\cdot10^{-5}$\tabularnewline
\hline 
\end{tabular}
\par\end{centering}

\protect\caption{Numerical error in $\psi$ for the damped harmonic oscillator example.}\vskip -5mm
\end{table}
Again, the multiplier method is the only method of those tested which guarantees conservation of $\psi$ up to round-off errors.

\section{Conclusion}\text{ }\\
In this paper, we have further developed the framework of the multiplier method applied to quasilinear ODE systems, originally put forward in~\cite{wan15a} and based on ideas from ~\cite{wan16}. Specifically, we showed that conservative schemes can be derived systematically for general dynamical systems. In particular, the multiplier method can be directly applied to non-Hamiltonian and non-autonomous systems without any reformulation or transformations of the original system. The method is fully systematic and can in principle yield consistent conservative discretization schemes of arbitrary order, though here we have restricted ourselves to conservative schemes that are at least first order accurate. The systematic construction of higher order conservative schemes using the multiplier approach is currently being explored. Moreover, we are currently investigating conservative semi-discretizations of PDEs using the multiplier approach.

In the application of the multiplier method, one practical difficulty which can arise is the need to invert a~$m\times m$ minor~$\tilde{\Lambda}$ of the multiplier matrix, where~$m$ is the number of conserved quantities to be preserved. This inversion is generally feasible when the dynamical systems has only a small number of conserved quantities, as it is typically the case. Moreover, the explicit inversion can often be avoided using the consistency and the form of the right hand side~$\bb f$, as demonstrated through several examples in this paper.

We point out that the conservative schemes derived using the multiplier method are not unique. In principle, one can make arbitrary choices of consistent discretizations for the multiplier matrix and the right hand side components~$\bb g$, though technical difficulties, such as small divisors, can arise if they are not compatible choices. One systematic choice for the multiplier matrix is made in this paper by the use of divided difference calculus, leading to potentially $n!$ distinct conservative schemes for a system with $n$ variables, as shown in the 3-species Lotka--Volterra example. Thus, the multiplier method is particularly flexible and can potentially be combined with other geometric numerical integration methods. 

While the different examples presented here highlight the generality of the multiplier method, they are all relatively small dynamical systems. Using the multiplier method, we are currently investigating large dynamical systems including the $n$-body problem, the $n$-species Lotka--Volterra system and the $n$-point vortex problem in the plane and on the sphere. Their findings and numerical results will be presented elsewhere, and will confirm the general applicability of the multiplier method for large dynamical systems.
\appendix
\section{Local invertibility of discrete multiplier matrix}\label{sec:localInverse}
\begin{lemma}\label{localInverse}
Let $n,m\in\mathbb{N}$ with $1\leq m \leq n$. Suppose $\tilde{\Lambda}\in C(B_R(s)\times B_R(\bb y)\rightarrow M_{m\times m}(\mathbb{R}))$ is invertible on some closed balls $B_R(s)\times B_R(\bb y)\subset I\times U$ of $(s,\bb y)\in I\times U$. Define
\[ \epsilon:=\displaystyle \min_{(t,\bb x)\in B_R(s)\times B_R(\bb y)} |\det(\tilde{\Lambda}(t,\bb x))|>0.
\]
Also assume $\tilde{\Lambda}^\tau = \tilde{\Lambda}+\mathcal{O}(\tau^q)$ and $\{\tilde{\Lambda}^\tau\}_{0<\tau<\tau_0}$ is equicontinuous on $B_R(s)\times B_R(\bb y)\times \cdots \times B_R(\bb y)$. Then, there exist constants $r(s,\bb y,\epsilon)$ with $0<r\leq R$ and $\tau^*(s,\bb y,\epsilon,\tau_0)>0$ such that if $0<\tau< \tau^*$ and $(t,\bb x^{k+1},\dots, \bb x^{k-r+1}) \in B_r(s)\times B_r(\bb y)\times\cdots\times B_r(\bb y)$, the inverse of $\tilde{\Lambda}^\tau(t,\bb x^{k+1},\dots,\bb x^{k-r+1})$ exists and there is a uniform bound $C(s,\bb y,\epsilon)>0$ satisfying,
\[
\norm{[\tilde{\Lambda}^\tau(t,\bb x^{k+1},\dots,\bb x^{k-r+1})]^{-1}} \leq C<\infty.
\]
\end{lemma}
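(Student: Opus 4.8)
The plan is to reduce the invertibility of the multi-argument step matrix $\tilde{\Lambda}^\tau$ near the ``diagonal'' to the invertibility of the single continuous matrix $\tilde{\Lambda}(s,\bb y)$, and then control the inverse by a Neumann series. Set $\beta := \norm{[\tilde{\Lambda}(s,\bb y)]^{-1}}$, which is finite since $|\det\tilde{\Lambda}(s,\bb y)|\geq\epsilon>0$; expressing the inverse through the adjugate gives the explicit bound $\beta\leq M^{m-1}/\epsilon$, where $M:=\max\norm{\tilde{\Lambda}}$ over the compact ball $B_R(s)\times B_R(\bb y)$, so the eventual constant depends only on $(s,\bb y,\epsilon)$. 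I would declare the target constant to be $C:=2\beta$ and aim to establish $\norm{[\tilde{\Lambda}^\tau(t,\bb x^{k+1},\dots,\bb x^{k-r+1})]^{-1}}\leq C$ on a small neighborhood of the diagonal point $(s,\bb y,\dots,\bb y)$.

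The first and principal step is a ``collapsing'' estimate: for tuples whose time entry lies in $B_r(s)$ and whose every spatial entry lies in $B_r(\bb y)$, I want $\tilde{\Lambda}^\tau$ to be close to $\tilde{\Lambda}(s,\bb y)$ uniformly in $\tau$. This is where both hypotheses enter. First, by equicontinuity of $\{\tilde{\Lambda}^\tau\}_{0<\tau<\tau_0}$ on $B_R(s)\times B_R(\bb y)\times\cdots\times B_R(\bb y)$, there is a $\delta>0$, independent of $\tau$, such that any two argument tuples within $\delta$ yield matrices within $1/(4\beta)$; choosing $r\leq R$ small enough that the whole cluster $B_r(s)\times B_r(\bb y)\times\cdots\times B_r(\bb y)$ lies within $\delta$ of its centre $(s,\bb y,\dots,\bb y)$ gives $\norm{\tilde{\Lambda}^\tau(t,\bb x^{k+1},\dots,\bb x^{k-r+1})-\tilde{\Lambda}^\tau(s,\bb y,\dots,\bb y)}<1/(4\beta)$ for all $\tau<\tau_0$. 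Second, applying the order-$q$ consistency $\tilde{\Lambda}^\tau=\tilde{\Lambda}+\mathcal{O}(\tau^q)$ to the \emph{constant} path $\bb x(t)\equiv\bb y$, for which every sampled argument equals $\bb y$ and all derivatives vanish, gives $\norm{\tilde{\Lambda}^\tau(s,\bb y,\dots,\bb y)-\tilde{\Lambda}(s,\bb y)}<1/(4\beta)$ once $\tau$ is smaller than some $\tau_1$. Setting $\tau^*:=\min(\tau_0,\tau_1)$ and combining by the triangle inequality yields $\norm{\tilde{\Lambda}^\tau(t,\bb x^{k+1},\dots,\bb x^{k-r+1})-\tilde{\Lambda}(s,\bb y)}<1/(2\beta)$.

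With the collapsing estimate in hand, the conclusion is routine. Writing $E:=\tilde{\Lambda}^\tau(t,\bb x^{k+1},\dots,\bb x^{k-r+1})-\tilde{\Lambda}(s,\bb y)$, I factor $\tilde{\Lambda}^\tau=\tilde{\Lambda}(s,\bb y)\bigl(I+[\tilde{\Lambda}(s,\bb y)]^{-1}E\bigr)$; since $\norm{[\tilde{\Lambda}(s,\bb y)]^{-1}E}\leq\beta\cdot(1/(2\beta))=1/2<1$, the Neumann series shows $\tilde{\Lambda}^\tau$ is invertible with $\norm{[\tilde{\Lambda}^\tau]^{-1}}\leq\beta/(1-1/2)=2\beta=C$, uniformly over the cluster and over $0<\tau<\tau^*$, which is exactly the claim.

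I expect the genuine difficulty to lie entirely in the collapsing step, namely the bookkeeping that passes from the step function $\tilde{\Lambda}^\tau$ --- which depends on the $r+1$ separate nodes $\bb x^{k+1},\dots,\bb x^{k-r+1}$ --- to the one-point continuous matrix $\tilde{\Lambda}(s,\bb y)$. Equicontinuity is precisely what makes ``all nodes clustered in $B_r(\bb y)$'' behave like ``all nodes equal to $\bb y$'', while consistency along a constant path is the only place the discretization error is actually pinned down, since consistency is defined through honestly sampled paths rather than arbitrary tuples. A minor technical point to dispatch is that $s$ need not be a grid time $t^k$; this is absorbed either through the time slot of the equicontinuity estimate or through the continuity of $\tilde{\Lambda}(\cdot,\bb y)$ at $s$.
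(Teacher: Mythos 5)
Your proof is correct and follows essentially the same route as the paper's: the key ``collapsing'' estimate --- equicontinuity to move from a clustered tuple to the diagonal point $(s,\bb y,\dots,\bb y)$, then order-$q$ consistency evaluated along the constant path $\bb x(t)\equiv\bb y$ (for which $\norm{\bb x}_{C^{p+q}}=\norm{\bb y}$), then the triangle inequality --- is exactly the paper's argument. The only difference is the final linear-algebra step, where you invoke a Neumann-series perturbation bound around $\tilde{\Lambda}(s,\bb y)$ while the paper uses continuity of the determinant and adjugate to bound $|\det\tilde{\Lambda}^\tau|$ from below and $\norm{\mathrm{adj}(\tilde{\Lambda}^\tau)}$ from above; both yield a uniform constant depending only on $(s,\bb y,\epsilon)$.
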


\begin{proof}Since the determinant function, $\det\colon M_{m\times m}(\mathbb{R})\rightarrow \mathbb{R}$, is continuous, there exist $\delta_1(s,\bb y,\epsilon)>0$ such that if $A\in M_{m\times m}(\mathbb{R})$ and $\norm{A-\Lambda(s,\bb y)}\leq \delta_1$,
\begin{equation}
\norm{\det(A)-\det(\Lambda(s,\bb y))}\leq \frac{\epsilon}{2}. \label{lem:det}
\end{equation}
Similarly, since the adjugate function, $adj\colon M_{m\times m}(\mathbb{R})\rightarrow M_{m\times m}(\mathbb{R})$, is continuous, there exists $\delta_2(s,\bb y)>0$ such that if $A\in M_{m\times m}(\mathbb{R})$ and $\norm{A-\Lambda(s,\bb y)}\leq \delta_2$,
\begin{equation}
\norm{adj(A)-adj(\Lambda(s,\bb y))}\leq 1. \label{lem:adj}
\end{equation}
Define $\delta(s,\bb y,\epsilon) := \min\{\delta_1(s,\bb y,\epsilon), \delta_2(s,\bb y)\}$. Then by equicontinuity of $\tilde{\Lambda}^\tau$ on \\$B_R(s)\times B_R(\bb y)\times\cdots B_R(\bb y)$, for all $0<\tau<\tau_0$ there exists a constant $r(s,\bb y, \delta)$ with $0<r\leq R$ such that if $(t,\bb x^{k+1},\dots, \bb x^{k-r+1}) \in B_r(s)\times B_r(\bb y)\times\cdots\times B_r(\bb y)$,
\begin{equation}
\norm{\tilde{\Lambda}^\tau(t,\bb x^{k+1},\dots, \bb x^{k-r+1})-\tilde{\Lambda}^\tau(s,\bb y,\dots, \bb y)}<\frac{\delta}{2}. \label{lem:equi}
\end{equation}
Let $\bb x(t)=\bb y$ be the constant function with $\norm{\bb x}_{C^p(I^k)}=\norm{\bb y}$ and define $\tau^*(\bb y,\delta,\tau_0):=\min\left\{\tau_0, \left(\frac{\delta}{2C_{\Lambda}\norm{\bb y}}\right)^{1/q}\right\}>0$. Then by consistency of $\tilde{\Lambda}^\tau$, if $0<\tau<\tau^*$ with $t^k=s$,
\begin{align}\label{lem:cons}
\norm{\tilde{\Lambda}^\tau(s,\bb y,\dots, \bb y)-\Lambda(s,\bb y)}&=\norm{\tilde{\Lambda}^\tau(t^k,\bb x(t^{k+1}),\dots, \bb x(t^{k-r+1}))-\Lambda(t^k,\bb x(t^k))}\\&
\leq C_\Lambda \norm{\bb y} \tau^q \leq \frac{\delta}{2}.\nonumber
\end{align}
Combining \eqref{lem:equi} and \eqref{lem:cons}, for any $(t,\bb x^{k+1},\dots, \bb x^{k-r+1}) \in B_r(s)\times B_r(\bb y)\times\cdots\times B_r(\bb y)$ and $0\leq\tau\leq \tau^*$,
\begin{align}\nonumber
\norm{\tilde{\Lambda}^\tau(t,\bb x^{k+1},\dots, \bb x^{k-r+1})-\Lambda(s,\bb y)}&\leq \norm{\tilde{\Lambda}^\tau(t,\bb x^{k+1},\dots, \bb x^{k-r+1})-\tilde{\Lambda}^\tau(s,\bb y,\dots, \bb y)}\\
& \hskip 2mm + \norm{\tilde{\Lambda}^\tau(s,\bb y,\dots, \bb y)-\Lambda(s,\bb y)} <\delta.\label{lem:ineq}
\end{align} And since $\delta\leq \delta_1$, \eqref{lem:det} implies
\begin{align}\label{lem:detDisc}
&\left|\det(\tilde{\Lambda}^\tau(t,\bb x^{k+1},\dots, \bb x^{k-r+1}))\right| \\
&\hskip 4mm\geq \underbrace{\left|\det(\Lambda(s,\bb y))\right|}_{\geq \epsilon} \underbrace{-\left|\det\tilde{\Lambda}^\tau(t,\bb x^{k+1},\dots, \bb x^{k-r+1})-\det\Lambda(s,\bb y)\right|}_{\geq -\epsilon/2} \geq \frac{\epsilon}{2}. \nonumber
\end{align}
In other words, $\tilde{\Lambda}^\tau$ is invertible for all $(t,\bb x^{k+1},\dots, \bb x^{k-r+1})\in B_r(s)\times B_r(\bb y)\times\cdots\times B_r(\bb y)$ and $0\leq\tau\leq \tau^*$. To show a uniform upper bound on the norm of $[\tilde{\Lambda}^\tau]^{-1}$, it follows from the adjugate formula, \eqref{lem:ineq}, \eqref{lem:detDisc} and \eqref{lem:adj} (since $\delta\leq \delta_2$) that,
\[
&\norm{[\tilde{\Lambda}^\tau(t,\bb x^{k+1},\dots, \bb x^{k-r+1})]^{-1}} \\
&\hskip 4mm \leq \frac{\left(\norm{adj(\Lambda(s,\bb y))}+\norm{adj(\tilde{\Lambda}^\tau(t,\bb x^{k+1},\dots, \bb x^{k-r+1}))-adj(\Lambda(s,\bb y))}\right)}{\left|\det(\tilde{\Lambda}^\tau(t,\bb x^{k+1},\dots, \bb x^{k-r+1}))\right|}\\
&\hskip 4mm \leq\frac{2}{\epsilon}\left(\norm{adj(\Lambda(s,\bb y))}+1\right)=:C(s,\bb y,\epsilon)<\infty.
\]
\end{proof}

\section{Divided difference calculus}
\label{sec:divDiff}
$\text{ }$\\
In this appendix, we construct discrete calculus rules for first order forward divided differences of multivariate functions in $t\in \mathbb{R}$ and $\bb x\in \mathbb{R}^n$. For simplicity, we consider $\bb x$ with only real entries, though analogous derivations can be carried with complex entries. Also, similar constructions can be derived for backward and centered divided differences.

Since we will be discussing approximations at different time steps, it is convenient to introduce the following multi-index notations. Let $\alpha \in \mathbb N^{n}$ be a multi-index. A vector $\bb x=(x_1,\dots,x_n)\in \mathbb{R}^n$ with components at different time steps $\alpha$ is denoted as $\bb x^\alpha:=(x_1^{\alpha_1},\dots,x_n^{\alpha_n})$. For convenience, when the context is clear, we use the shorthand $k=(k,\dots,k) \in \mathbb{N}^n$. For $i=1,\dots, n$, we write $e_i\in \mathbb{N}^n$ as the multi-index with $1$ in the $i$-th component and $0$ elsewhere.

In anticipation of explicit time-dependent functions, the following notations will be used throughout this article. Specifically, let $\alpha \in \mathbb N^{n}$ and $\alpha_0\in \mathbb{N}$. To distinguish the time component, we denote a ``space-time" multi-index as $\bb \alpha=(\alpha_0,\alpha)$ and denote a space-time vector $\bb X=(t,\bb x)\in \mathbb{R}^{n+1}$ at different time steps $\bb \alpha$ as $\bb X^{\bb \alpha}:=(t^{\alpha_0},\bb x^\alpha)$. Similarly, when the context is clear, we abbreviate $\bb k=(k,\dots, k) \in \mathbb{N}^{n+1}$. For $i=0,\dots, n$, denote $\bb e_i\in \mathbb{R}^{n+1}$ as the space-time multi-index with $1$ in the $i$-th component and $0$ elsewhere.

\subsection{Forward difference}
\text{ }\\
Let $\bb X^{\bb k} = (t^k, \bb x^{k})\in I\times U$ and $\bb f\in C(I\times U\rightarrow\mathbb{R}^n)$. 

\begin{definition}
The forward difference of $\bb f$ at $\bb X^{\bb k}$ is the linear operator $\Delta: C(I\times U\rightarrow \mathbb{R}^n)\rightarrow \mathbb{R}^n$,
\[
\Delta \bb f({\bb X}^{\bb k}) :=\bb f(\bb X^{\bb {k+1}}) -\bb f(\bb X^{\bb k}) =\bb f(t^{k+1}, \bb x^{k+1})-\bb f(t^{k}, \bb x^{k}).
\]
\end{definition}It follows from continuity of $\bb f$ that $\displaystyle \lim_{{\bb X}^{\bb {k+1}}\rightarrow{\bb X}^{\bb k}}\Delta \bb f({\bb X}^{\bb k}) = 0$.
\begin{definition}
For $i=0,\dots, n$, the $i$-th partial forward difference of $\bb f$ at $\bb X^{\bb k}$ is the linear operator $\Delta_i: C(I\times U\rightarrow\mathbb{R}^n)\rightarrow \mathbb{R}^n$,
\[
\Delta_i \bb f(\bb X^{\bb k}) := \bb f(\bb X^{\bb {k}+\bb e_i})-\bb f(\bb X^{\bb {k}}).
\] Denote the partial forward difference of $f$ with respect to $t$ at $\bb X^{\bb k}$ as
\[
\Delta_0 \bb f(\bb X^{\bb k}) := \bb f(t^{k+1}, \bb x^{k})-\bb f(t^{k}, \bb x^{k}),
\]
and denote the partial forward difference of $\bb f$ with respect to $x_i$ at $\bb X^{\bb k}$ as
\[
\Delta_i \bb f(\bb X^{\bb k})= \bb f(t^{k}, \bb x^{k+e_i})-\bb f(t^{k}, \bb x^{k}), \text{ for }i=1,\dots, n.
\]
\end{definition}
\begin{lemma}
For any fixed permutation $\sigma\in S_{n+1}$ of the finite set $\{0,\dots, n\}$, define the sequence of vectors $\bb{v}_{i+1}=\bb{v}_{i}+\bb e_{\sigma(i)}\in \mathbb{N}^{n+1}$ with $\bb v_0=\bb 0$. Then $\bb v_{n+1}=\bb 1$ and $\Delta \bb f=\Delta_\sigma f$ where,
\begin{equation}
\Delta_\sigma \bb  f(\bb X^{\bb k}) := \sum_{i=0}^n \Delta_{\sigma(i)} \bb f(\bb x^{\bb k+\bb{v}_i}) =  \sum_{i=0}^n \Delta_{i} \bb f(\bb x^{\bb k+\bb{v}_{\sigma^{-1}(i)}}), \hskip 3mm \bb f\in C(I\times U\rightarrow \mathbb{R}^n). \label{permFormula}
\end{equation}
\end{lemma}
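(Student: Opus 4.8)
The plan is to establish the identity by a telescoping argument, after first checking the auxiliary claim $\bb v_{n+1} = \bb 1$. The whole proof is elementary; essentially all the content is in correctly bookkeeping the multi-index shifts so that the telescoping closes up.

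First I would observe that unrolling the recursion $\bb v_{i+1} = \bb v_i + \bb e_{\sigma(i)}$ with $\bb v_0 = \bb 0$ gives $\bb v_{n+1} = \sum_{i=0}^n \bb e_{\sigma(i)}$. Since $\sigma$ is a permutation of $\{0,\dots,n\}$, the indices $\sigma(0),\dots,\sigma(n)$ run over all of $\{0,\dots,n\}$ exactly once, so $\bb v_{n+1} = \sum_{j=0}^n \bb e_j = \bb 1$. This also identifies the shifted argument $\bb X^{\bb k + \bb v_{n+1}} = \bb X^{\bb k + \bb 1} = \bb X^{\bb{k+1}}$ appearing in the full forward difference $\Delta \bb f$.

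Next comes the key step: rewriting each summand of $\Delta_\sigma \bb f$ via the definition of the partial forward difference. By definition $\Delta_{\sigma(i)} \bb f(\bb X^{\bb k + \bb v_i}) = \bb f(\bb X^{\bb k + \bb v_i + \bb e_{\sigma(i)}}) - \bb f(\bb X^{\bb k + \bb v_i})$, and since the recursion gives $\bb v_i + \bb e_{\sigma(i)} = \bb v_{i+1}$, this equals $\bb f(\bb X^{\bb k + \bb v_{i+1}}) - \bb f(\bb X^{\bb k + \bb v_i})$. Summing over $i = 0,\dots,n$ then telescopes to $\bb f(\bb X^{\bb k + \bb v_{n+1}}) - \bb f(\bb X^{\bb k + \bb v_0})$, which by the first step is $\bb f(\bb X^{\bb{k+1}}) - \bb f(\bb X^{\bb k}) = \Delta \bb f(\bb X^{\bb k})$, as required.

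Finally I would verify the equality of the two written forms of $\Delta_\sigma \bb f$ by a change of summation index: setting $j = \sigma(i)$ so that $i = \sigma^{-1}(j)$, and noting $j$ ranges over $\{0,\dots,n\}$ as $i$ does, converts $\sum_i \Delta_{\sigma(i)} \bb f(\bb X^{\bb k + \bb v_i})$ into $\sum_j \Delta_j \bb f(\bb X^{\bb k + \bb v_{\sigma^{-1}(j)}})$. There is no genuine obstacle in this lemma; the only real care needed is confirming that the partial-difference shift $\bb e_{\sigma(i)}$ advances $\bb v_i$ to precisely the next vector $\bb v_{i+1}$ in the chain, which is exactly the property that makes the telescoping sum collapse to the full forward difference regardless of the chosen permutation $\sigma$.
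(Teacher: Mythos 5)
Your proof is correct and follows essentially the same telescoping argument as the paper, just run in the opposite direction (collapsing $\Delta_\sigma\bb f$ into $\Delta\bb f$ rather than expanding $\Delta\bb f$ into the telescoping sum), and with the reindexing $j=\sigma(i)$ between the two written forms made explicit, which the paper leaves implicit.
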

\begin{proof} $\bb v_{n+1}=\bb 1$ follows from injectivity of $\sigma\in S_{n+1}$. Then combining with definition of $\bb v_i$,
\[
\Delta \bb f(\bb x^{\bb k}) &= \left(\bb f(\bb X^{\bb {k}+\bb v_{n+1}})- \bb f(\bb X^{\bb {k}+\bb v_{n}}) \right)+\dots+\left( \bb f(\bb X^{\bb {k}+\bb v_{1}})- \bb f(\bb X^{\bb k+\bb v_0})\right) \\
&= \left(\bb f(\bb X^{\bb {k}+\bb v_{n}+\bb e_{\sigma(n)}})- \bb f(\bb X^{\bb {k}+\bb v_{n}}) \right)+\dots+\left(\bb  f(\bb X^{\bb {k}+\bb v_{0}+\bb e_{\sigma(0)}})- \bb f(\bb X^{\bb k+\bb v_0})\right) \\
&= \sum_{i=0}^n \Delta_{\sigma(i)} \bb f(\bb X^{\bb k+\bb{v}_i}) = \Delta_\sigma \bb f(\bb X^{\bb k}).\]
\end{proof}
\begin{remark}If $\bb f$ is time independent, then $\sigma \in S_n$ of the finite set $\{1,\dots, n\}$ and the sequence $\bb{v}_{i+1}=\bb{v}_{i}+\bb e_{\sigma(i)}\in \mathbb{N}^{n}$ starts with $\bb v_1 = 0$.
\end{remark}
There are many equivalent forms of \eqref{permFormula}. Making the specific choices for $\sigma$ in \eqref{permFormula} with the identity permutation and the reversal permutation $(0 \dots n)\rightarrow (n \dots 0)$ gives the following two lexicographically-ordered forward differences.
\begin{definition} The (lexicographically-ordered) increasing and decreasing forw-ard differences of $\bb f$ at $\bb X^{\bb k}$ are the linear operators $\Delta_{inc}, \Delta_{dec}:C(I\times U\rightarrow \mathbb{R}^n)\rightarrow \mathbb{R}^n$,
\begin{align*}
\Delta_{inc} \bb f(\bb X^{\bb k}) &= \sum_{i=0}^n \Delta_{i} \bb f(\bb X^{\bb k+\bb{E}_i}), \hskip 3mm f\in C(I\times U\rightarrow \mathbb{R}),\\
\Delta_{dec} \bb f(\bb X^{\bb k}) &= \sum_{i=0}^n \Delta_{i} \bb f(\bb X^{\bb {k+1}-\bb{E}_i}), \hskip 3mm f\in C(I\times U\rightarrow \mathbb{R}).
\end{align*} where ${\bb E}_i\in\mathbb{R}^{n+1}$ with ones in the first $i$ components and zero in the last $n+1-i$ components and $\Delta_{inc} \bb f=\Delta \bb f=\Delta_{dec} \bb f$.
\end{definition}
Moreover, since there are $(n+1)!$ choices for $\sigma$, summing up the different $\sigma\in S_{n+1}$ gives a symmetrized form of \eqref{permFormula}. 
\begin{definition} The symmetrized forward difference of $\bb f$ at $\bb X^{\bb k}$ is the linear operator $\Delta_{sym}:C(I\times U\rightarrow \mathbb{R}^n)\rightarrow \mathbb{R}^n$,
\begin{align}
\Delta_{sym} \bb f(\bb X^{\bb k}) &= \frac{1}{(n+1)!} \sum_{i=0}^n\sum_{\sigma \in S_{n+1}} \Delta_{\sigma(i)} \bb f(\bb X^{\bb k+\bb{v}_i}) \label{symForDiv} \\
&= \sum_{i=0}^n \Delta_{i} \bb f_{sym}(\bb X^{\bb k}), \hskip 3mm f\in C(I\times U\rightarrow \mathbb{R}),\nonumber
\end{align}where $\bb f_{sym}(\bb X^{\bb k}) := \frac{1}{(n+1)!}\sum_{\sigma \in S_{n+1}}  \bb f(\bb X^{\bb k+\bb{v}_{\sigma(i)}})$ and $\Delta \bb f=\Delta_{sym} \bb f$.
\end{definition}

\subsection{Divided difference}
\text{ }\\
Next, we define a version of multivariate divided difference. For convenience, we use $x_0:=t$ interchangeably so that $\Delta x_0:=\Delta t := t^{k+1}-t^k$ and $\Delta x_i := \Delta x_i^k = x_i^{k+1}-x_i^k$ for $i=1,\dots,n$. 
\begin{definition}
Let $\bb f\in C^1(I\times U\rightarrow\mathbb{R}^n)$ and $\bb X^{\bb \alpha}\in I\times U$. For $i=0,\dots,n$, the $i$-th (first order) divided difference of $f$ at $\bb X^{\bb \alpha}$ is the linear operator $\frac{\Delta}{\Delta_i}:C^1(I\times U\rightarrow\mathbb{R}^n)\rightarrow \mathbb{R}^n$,
\[
\frac{\Delta}{\Delta x_i}\bb f(\bb X^{\bb \alpha}):=\frac{\Delta_i \bb f(\bb X^{\bb \alpha})}{\Delta x_i} = \frac{\bb f(\bb X^{\bb \alpha+\bb e_i})-\bb f(\bb X^{\bb \alpha})}{x_i^{k+1}-x_i^k}, \] 
\end{definition} where $\displaystyle \lim_{\bb X^{\bb \alpha+\bb e_i}\rightarrow \bb X^{\bb \alpha}} \frac{\Delta}{\Delta x_i}\bb f(\bb X^{\bb \alpha}) = \partial_{x_i}\bb f(t^{\alpha_0},\bb x^\alpha).
$\\
In light of \eqref{permFormula}, we can now view forward differences as a kind of ``discrete differential" acting on $\bb f$.
\begin{lemma}
For any $\sigma \in S_{n+1}$ and $\bb f\in C^1(I\times U\rightarrow\mathbb{R}^n)$, \eqref{permFormula} is equivalent to,
\begin{align}
\Delta_{\sigma}\bb f(\bb X^{\bb k}) &= \sum_{i=0}^n \frac{\Delta}{\Delta x_i}\bb f (\bb X^{\bb k+\bb{v}_{\sigma^{-1}(i)}})\Delta x_i \label{discDiffPerm}\\&=\frac{\Delta}{\Delta t}\bb f(\bb X^{\bb k+\bb{v}_{\sigma^{-1}(0)}}) \Delta t+\sum_{i=1}^n \frac{\Delta}{\Delta x_i}\bb f(\bb X^{\bb k+\bb{v}_{\sigma^{-1}(i)}}) \Delta x_{i}. \nonumber
\end{align}Moreover, the symmetrized version follows from \eqref{symForDiv},
\begin{align}
\Delta_{sym} \bb f(\bb X^{\bb k}) &= \sum_{i=0}^n \frac{\Delta}{\Delta x_i}\bb f_{sym}(\bb X^{\bb k})\Delta x_i \label{discDiffSym} \\
&=\frac{\Delta}{\Delta t}\bb f_{sym}(\bb X^{\bb k}) \Delta t+\sum_{i=1}^n \frac{\Delta}{\Delta x_i}\bb f_{sym}(\bb X^{\bb k}) \Delta x_{i}. \nonumber
\end{align}
\end{lemma}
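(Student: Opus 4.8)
The plan is to obtain both displayed identities by a termwise substitution, reading the divided difference simply as the partial forward difference scaled by the corresponding increment. The one relation driving everything is the very definition of $\frac{\Delta}{\Delta x_i}$: multiplying $\frac{\Delta}{\Delta x_i}\bb f(\bb X^{\bb \alpha}) = \Delta_i \bb f(\bb X^{\bb \alpha})/\Delta x_i$ through by $\Delta x_i$ gives $\Delta_i \bb f(\bb X^{\bb \alpha}) = \frac{\Delta}{\Delta x_i}\bb f(\bb X^{\bb \alpha})\,\Delta x_i$. Thus the whole matter reduces to rewriting each partial-difference summand in \eqref{permFormula} and \eqref{symForDiv} through this relation and collecting terms.

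For \eqref{discDiffPerm} I would begin from the second form of \eqref{permFormula}, $\Delta_\sigma \bb f(\bb X^{\bb k}) = \sum_{i=0}^n \Delta_i \bb f(\bb X^{\bb k+\bb v_{\sigma^{-1}(i)}})$, and replace each $\Delta_i \bb f(\bb X^{\bb k+\bb v_{\sigma^{-1}(i)}})$ by $\frac{\Delta}{\Delta x_i}\bb f(\bb X^{\bb k+\bb v_{\sigma^{-1}(i)}})\,\Delta x_i$, which immediately yields the first line; splitting off the $i=0$ term and using $x_0 = t$, $\Delta x_0 = \Delta t$ gives the second line. The point requiring care is that the divided difference in slot $i$ must be anchored at an index whose $i$-th component sits at level $k$, so that its denominator $\Delta x_i = x_i^{k+1}-x_i^k$ is the genuine increment of the $i$-th argument and the quotient is a true difference quotient consistent to $\partial_{x_i}\bb f$. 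Writing $\bb v_j = \sum_{l=0}^{j-1}\bb e_{\sigma(l)}$ from the recursion $\bb v_{l+1} = \bb v_l + \bb e_{\sigma(l)}$, the $i$-th component of $\bb v_j$ counts the $l<j$ with $\sigma(l)=i$; choosing $j = \sigma^{-1}(i)$, the unique index with $\sigma(l)=i$ is $l = \sigma^{-1}(i) = j$, which is excluded from the range $l<j$, so the $i$-th component of $\bb v_{\sigma^{-1}(i)}$ vanishes and the anchoring holds.

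For the symmetrized identity \eqref{discDiffSym} I would apply the same relation to each summand of \eqref{symForDiv}, $\Delta_{sym}\bb f(\bb X^{\bb k}) = \sum_{i=0}^n \Delta_i \bb f_{sym}(\bb X^{\bb k})$, converting $\Delta_i \bb f_{sym}(\bb X^{\bb k})$ into $\frac{\Delta}{\Delta x_i}\bb f_{sym}(\bb X^{\bb k})\,\Delta x_i$; here the base point is $\bb X^{\bb k}$, whose $i$-th component is $k$ by the shorthand $\bb k = (k,\dots,k)$, so no combinatorial check is needed, and separating the time component produces the second line. I expect the only genuine obstacle to be the index bookkeeping just described for the permutation case, namely verifying that each $\Delta_i$ is anchored at level $k$ in its $i$-th slot; once that combinatorial fact about $\bb v_{\sigma^{-1}(i)}$ is in hand, both identities follow by termwise substitution with no estimates required.
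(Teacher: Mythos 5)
Your argument is correct and is essentially the paper's own (the paper states this lemma without a separate proof, treating it as an immediate consequence of the definition $\frac{\Delta}{\Delta x_i}\bb f = \Delta_i \bb f/\Delta x_i$ applied termwise to \eqref{permFormula} and \eqref{symForDiv}). Your additional check that the $i$-th component of $\bb v_{\sigma^{-1}(i)}$ vanishes, so each $\Delta_i$ is anchored at level $k$ and the quotient by $\Delta x_i = x_i^{k+1}-x_i^k$ is a genuine difference quotient, is exactly the bookkeeping the paper leaves implicit.
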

Similar to calculus rules for differentiable functions, we have the following calculus rules\footnote{By no means, this list is complete. We have only included rules employed in the examples.} for first order divided differences. Their proofs follows by direct computation.

\begin{theorem} For $i=0,\dots, n$, let $\frac{\Delta}{\Delta x_i}$ be the $i$-th divided difference and $\bb X^{\bb k},$ $\bb X^{\bb k+\bb e_i} \in I\times U$. The following relations hold:\\

\begin{enumerate}[label=(\Roman*)]
\item {\bf Linearity} \label{rule:linearity} \\ For $\bb f, \bb g \in C^1(I\times U\rightarrow\mathbb{R}^n)$ and $a,b\in \mathbb{R}$,
\[
\frac{\Delta}{\Delta x_i} \left(a \bb f+b \bb g\right)(\bb X^{\bb k}) = a\frac{\Delta}{\Delta x_i} \bb f(\bb X^{\bb k})+b\frac{\Delta}{\Delta x_i}\bb g(\bb X^{\bb k}).
\]

\item {\bf Product rule} \label{rule:product} \\ For $f,g \in C^1(I\times U\rightarrow\mathbb{R})$,

\[
\frac{\Delta}{\Delta x_i} \left(fg\right)(\bb X^{\bb k}) &= \frac{\Delta }{\Delta x_i}f(\bb X^{\bb k})g(\bb X^{\bb k+\bb e_i})+f(\bb X^{\bb k})\frac{\Delta }{\Delta x_i}g(\bb X^{\bb k})\\
&= \frac{\Delta}{\Delta x_i}f(\bb X^{\bb k})g(\bb X^{\bb k})+f(\bb X^{\bb k+\bb e_i})\frac{\Delta}{\Delta x_i}g(\bb X^{\bb k}).
\]

\item {\bf Reciprocal rule} \label{rule:reciprocal} \\ For $f \in C^1(I\times U\rightarrow\mathbb{R})$ and $0 \notin f(I\times U)$,

\[
&\frac{\Delta}{\Delta x_i} \left(\frac{1}{f}\right)(\bb X^{\bb k}) = -\frac{1}{f(\bb X^{\bb k+\bb e_i})f(\bb X^{\bb k})}\frac{\Delta}{\Delta x_i}f(\bb X^{\bb k}).
\]

\item {\bf Quotient rule} \label{rule:quotient} \\ For $f,g \in C^1(I\times U\rightarrow\mathbb{R})$ and $0 \notin g(I\times U)$,

\[
\frac{\Delta}{\Delta x_i} \left(\frac{f}{g}\right) &= \frac{\frac{\Delta}{\Delta x_i}f(\bb X^{\bb k}) g(\bb X^{\bb k+\bb e_i})-f(\bb X^{\bb k})\frac{\Delta}{\Delta x_i}g(\bb X^{\bb k})}{g(\bb X^{\bb k+\bb e_i})g(\bb X^{\bb k})} \\
&= \frac{\frac{\Delta}{\Delta x_i}f(\bb X^{\bb k}) g(\bb X^{\bb k})-f(\bb X^{\bb k+\bb e_i})\frac{\Delta}{\Delta x_i}g(\bb X^{\bb k})}{g(\bb X^{\bb k+\bb e_i})g(\bb X^{\bb k})}.
\]

\item {\bf Chain rule} \label{rule:chainScalar} \\
Let $g\in C^1(I\times U\rightarrow \mathbb{R})$ and let $f\in C^1(V\rightarrow \mathbb{R})$ with $V\subset \mathbb{R}$ be an open subset such that $g(I\times U)\subset V$. If $\Delta_i g(\bb X^{\bb k})\neq 0$, 
\begin{equation*}
\frac{\Delta}{\Delta x_i} (f\circ g)(\bb X^{\bb k}) = \frac{\Delta_i f(g(\bb X^{\bb k}))}{\Delta_i g(\bb X^{\bb k})}\frac{\Delta}{\Delta x_i}g(\bb X^{\bb k}). 
\end{equation*}



\item {\bf Constant rule} \label{rule:constant}\\
Let $\bb f({\bb X}^\alpha)=\bb f(x_0^{\alpha_0},\dots,x_{i-1}^{\alpha_{i-1}},x_{i+1}^{\alpha_{i+1}}\dots,x_n^{\alpha_n})$.
\[
\frac{\Delta }{\Delta x_i}\bb f({\bb X}^\alpha) = \bb 0.
\]

\item {\bf Separable product rule} \label{rule:sepProd}\\
Let $f({\bb X}^\alpha)=\prod_{i=0}^n f_i(x_i^{\alpha_i})$ with single variable functions $f_i\in C^1(I\times U\rightarrow\mathbb{R})$.
\[
\frac{\Delta }{\Delta x_i}f({\bb X}^\alpha) = \frac{\Delta}{\Delta x_i}f_i(x^{\alpha_i}_i) \prod_{i\neq j=0}^n f_j(x^{\alpha_j}_j) .
\]


\item {\bf Rational power rule} \label{rule:rationalPower}\\
Let $p,q\in \mathbb{N}$. If $q>1$, assume $x_i^k, x_i^{k+1}$ are positive.
\begin{equation*}
\frac{\Delta}{\Delta x_i}\left((x^{k}_i)^\frac{p}{q}\right) = 
\frac{\sum_{l=0}^{p-1} (x^{k+1}_i)^{\frac{l}{q}} (x^{k}_i)^{\frac{p-1-l}{q}}}{\sum_{l=0}^{q-1} (x^{k+1}_i)^\frac{l}{q} (x^{k}_i)^\frac{q-1-l}{q}}.
\end{equation*}

\item {\bf Multivariate polynomial rule} \label{rule:polynomial}\\
Let $\displaystyle f(\bb X^{\bb \alpha})=\sum_{|\bb p|\leq d} c_{\bb p} \prod_{i=0}^n (x_i^{\alpha_i})^{p_i}$ be a $(n+1)$-variate polynomial of at most degree $d$ with $\bb p=(p_0,\dots, p_n)\in\mathbb{N}_0^{n+1}$ and $c_{\bb p} \in \mathbb{R}$.
\[
\frac{\Delta}{\Delta x_i} f(\bb X^{\bb \alpha}) = \sum_{|\bb p|\leq d} c_{\bb p}  \left(\sum_{l=0}^{p_i-1} (x^{\alpha_i+1}_i)^l (x^{\alpha_i}_i)^{p_i-1-l}\right) \prod_{i\neq j=0}^n (x_j^{\alpha_j})^{p_j}.
\]

\item {\bf Exponential rule} \label{rule:exponential}\\
\[
\frac{\Delta}{\Delta x_i} e^{x_i^k} = e^{x_i^k} \left(\frac{e^{\Delta x_i}-1}{\Delta x_i}\right).
\]

\item {\bf Logarithm rule} \label{rule:logarithm} \\
\[
\frac{\Delta}{\Delta x_i} \log(x_i^k) = \frac{\log x_i^{k+1}-\log x_i^k}{x_i^{k+1}-x_i^k} = \frac{1}{x_i^k} \left(\frac{\log \left(\frac{x_i^{k+1}}{x_i^k}\right)}{\left(\frac{x_i^{k+1}}{x_i^k}\right)-1}\right).
\]
\end{enumerate}
\end{theorem}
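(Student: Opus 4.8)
The plan is to verify each identity directly from the definition $\frac{\Delta}{\Delta x_i}\bb f(\bb X^{\bb k}) = \frac{\bb f(\bb X^{\bb k+\bb e_i}) - \bb f(\bb X^{\bb k})}{x_i^{k+1}-x_i^k}$, since every rule is an algebraic consequence of how this difference quotient interacts with the operation in question. I would dispatch the easy cases first. Linearity \ref{rule:linearity} is immediate because the forward difference $\Delta_i$ is linear and division by the scalar $\Delta x_i$ preserves linear combinations. The constant rule \ref{rule:constant} follows from the observation that if $\bb f$ has no dependence on $x_i$ then $\bb f(\bb X^{\bb k+\bb e_i}) = \bb f(\bb X^{\bb k})$, so $\Delta_i\bb f = \bb 0$; the separable product rule \ref{rule:sepProd} then drops out by combining this with the product rule, since every factor $f_j$ with $j\neq i$ is constant under $\Delta_i$.

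For the product, reciprocal, and quotient rules \ref{rule:product}--\ref{rule:quotient} the key device is the discrete Leibniz trick of adding and subtracting a cross term. Writing $\Delta_i(fg) = f(\bb X^{\bb k+\bb e_i})g(\bb X^{\bb k+\bb e_i}) - f(\bb X^{\bb k})g(\bb X^{\bb k})$ and inserting $\pm f(\bb X^{\bb k})g(\bb X^{\bb k+\bb e_i})$ factors the numerator as $\Delta_i f \cdot g(\bb X^{\bb k+\bb e_i}) + f(\bb X^{\bb k})\Delta_i g$, which upon dividing by $\Delta x_i$ yields the first form of \ref{rule:product}; the second form comes from inserting the other cross term $\pm f(\bb X^{\bb k+\bb e_i})g(\bb X^{\bb k})$ instead. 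The reciprocal rule \ref{rule:reciprocal} is a one-line computation combining the two fractions over the common denominator $f(\bb X^{\bb k+\bb e_i})f(\bb X^{\bb k})$, and the quotient rule \ref{rule:quotient} follows either by applying the product rule to $f\cdot(1/g)$ or by the same cross-term manipulation applied directly.

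The chain rule \ref{rule:chainScalar} uses the complementary device of multiplying and dividing by $\Delta_i g$: under the hypothesis $\Delta_i g(\bb X^{\bb k})\neq 0$ I would write the difference quotient of $f\circ g$ as $\frac{\Delta_i f(g(\bb X^{\bb k}))}{\Delta_i g(\bb X^{\bb k})}\cdot\frac{\Delta_i g(\bb X^{\bb k})}{\Delta x_i}$, which is exactly the stated product. The remaining rules \ref{rule:rationalPower}--\ref{rule:logarithm} are evaluations of difference quotients of specific elementary functions. The polynomial rule \ref{rule:polynomial} reduces, via linearity and the constant rule, to a single power of $x_i$, for which the finite factorization $a^{p}-b^{p}=(a-b)\sum_{l=0}^{p-1}a^{l}b^{p-1-l}$ cancels the denominator $x_i^{k+1}-x_i^k$. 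The exponential rule \ref{rule:exponential} follows by factoring $e^{x_i^k}$ out of $e^{x_i^{k+1}}-e^{x_i^k}$ using $x_i^{k+1}=x_i^k+\Delta x_i$, and the logarithm rule \ref{rule:logarithm} by factoring $1/x_i^k$ out and rewriting the remainder through the ratio $x_i^{k+1}/x_i^k$.

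The one step requiring genuine care is the rational power rule \ref{rule:rationalPower}, which is where I expect the main obstacle. Here I would set $a=(x_i^{k+1})^{1/q}$ and $b=(x_i^{k})^{1/q}$ --- well-defined and distinct precisely because of the positivity assumption imposed when $q>1$ --- so that the difference quotient becomes $\frac{a^{p}-b^{p}}{a^{q}-b^{q}}$. Applying the geometric factorization to both numerator and denominator and cancelling the common factor $(a-b)$ produces the claimed ratio of sums. Assembling these eleven short computations completes the proof; none is deep, but the rational power case is the only one where the substitution to roots and the bookkeeping of the two factorizations is not entirely mechanical.
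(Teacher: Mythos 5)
Your proposal is correct and is, in substance, the paper's own proof: the paper dispatches the entire theorem with the single sentence that the proofs follow by direct computation, and your eleven verifications are exactly those computations, using the standard devices (cross-term insertion for \ref{rule:product}--\ref{rule:quotient}, multiply-and-divide by $\Delta_i g$ for \ref{rule:chainScalar}, and the factorization $a^p-b^p=(a-b)\sum_{l=0}^{p-1}a^l b^{p-1-l}$ for \ref{rule:rationalPower} and \ref{rule:polynomial}, where your substitution $a=(x_i^{k+1})^{1/q}$, $b=(x_i^{k})^{1/q}$ and cancellation of $a-b$ is exactly right).

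One point your derivation should have surfaced: carrying out either of your two suggested routes for the quotient rule \ref{rule:quotient} yields
\[
\frac{\Delta}{\Delta x_i}\left(\frac{f}{g}\right)(\bb X^{\bb k})&=\frac{\frac{\Delta}{\Delta x_i}f(\bb X^{\bb k})\, g(\bb X^{\bb k})-f(\bb X^{\bb k})\frac{\Delta}{\Delta x_i}g(\bb X^{\bb k})}{g(\bb X^{\bb k+\bb e_i})g(\bb X^{\bb k})}\\
&=\frac{\frac{\Delta}{\Delta x_i}f(\bb X^{\bb k})\, g(\bb X^{\bb k+\bb e_i})-f(\bb X^{\bb k+\bb e_i})\frac{\Delta}{\Delta x_i}g(\bb X^{\bb k})}{g(\bb X^{\bb k+\bb e_i})g(\bb X^{\bb k})},
\]
whereas the two lines printed in the statement have the evaluation points on the undifferenced factors transposed between them; as printed they fail already for $f=g$, where the left-hand side is $0$ but the printed numerators reduce to $\pm(\Delta_i f)^2/\Delta x_i$. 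This is a typo in the statement rather than a gap in your argument---your method produces the correct identities, and the quotient rule is not used in any of the paper's examples---but a careful write-up should note that the formula one derives does not literally match the one displayed. Everything else checks out.
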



\bibliographystyle{siamplain}

\bibliography{refs}

\end{document}